\pgfplotsset{compat=newest}
\theoremstyle{thmstyleone}%
\newtheorem{proposition}{Proposition}[section]% 
\newtheorem{remarque}{Remark}[section]
\newtheorem{lemme}{Lemma}[section]
\newtheorem{corollaire}{Corollary}[section]
\theoremstyle{thmstyletwo}%
\theoremstyle{thmstylethree}%
\newtheorem{definition}{Definition}%
\newcommand{\1}{{\mathchoice {\rm 1\mskip-4mu l} {\rm 1\mskip-4mu l}{\rm 1\mskip-4.5mu l} {\rm 1\mskip-5mu l}}}
\newcommand{\N}{\mathbb{N}}
\newcommand{\R}{\mathbb{R}}
\newcommand{\prox}{\text{prox}}
\let\xp\undefined
\newcommand{\xp}{\xi_p} 
\newcommand{\bp}{\zeta_p} 
\newcommand{\zp}{\omega_p} 
\newcommand{\X}{\mathscr{X}} % Estimation space
\newcommand{\Y}{\mathscr{Y}} % Observation space
\newcommand{\Z}{\mathscr{Z}} % Observation space
\begin{document}

\title[Stability Bounds for the Unfolded Forward-Backward Algorithm]{Stability Bounds for the Unfolded Forward-Backward Algorithm}

%%=============================================================%%
%% GivenName	-> \fnm{Joergen W.}
%% Particle	-> \spfx{van der} -> surname prefix
%% FamilyName	-> \sur{Ploeg}
%% Suffix	-> \sfx{IV}
%% \author*[1,2]{\fnm{Joergen W.} \spfx{van der} \sur{Ploeg} 
%%  \sfx{IV}}\email{iauthor@gmail.com}
%%=============================================================%%

\author[1]{\fnm{Emilie} \sur{Chouzenoux}}\email{emilie.chouzenoux@inria.fr}
%\equalcont{These authors contributed equally to this work.}

\author[2]{\fnm{C\'ecile} \sur{Della Valle}}\email{cecile.della-valle@parisdescartes.fr}

\author*[1]{\fnm{Jean-Christophe} \sur{Pesquet}}\email{jean-christophe.pesquet@centralesupelec.fr}
%\equalcont{These authors contributed equally to this work.}

%Université de Paris, Paris Sorbonne Université

\affil[1]{\orgdiv{Center for Visual Computing}, \orgname{Inria, CentraleSup\'elec, University Paris Saclay}, \orgaddress{\city{Gif-sur-Yvette}, \postcode{91190}, %\state{State},
\country{France}}}

\affil[2]{\orgdiv{Universit\'e de Paris}, \orgname{Paris Sorbonne Universit\'e}, \orgaddress{
%\street{Street}, 
\city{Paris}, \postcode{75005}, %\state{State}, 
\country{France}}}

% \affil[3]{\orgdiv{Department}, \orgname{Organization}, \orgaddress{\street{Street}, \city{City}, \postcode{610101}, \state{State}, \country{Country}}}

%%==================================%%
%% Sample for unstructured abstract %%
%%==================================%%

\abstract{We consider a neural network architecture designed to solve inverse problems where the degradation operator is linear and known. This architecture is constructed by unrolling a forward-backward algorithm derived from the minimization of an objective function that combines a data-fidelity term, a Tikhonov-type regularization term, and a potentially nonsmooth convex penalty. The robustness of this inversion method to input perturbations is analyzed theoretically. Ensuring robustness complies with the principles of inverse problem theory, as it ensures both the continuity of the inversion method and the resilience to small noise - a critical property given the known vulnerability of deep neural networks to adversarial perturbations.
A key novelty of our work lies in examining the robustness of the proposed network to perturbations in its bias, which represents the observed data in the inverse problem. Additionally, we provide numerical illustrations of the analytical Lipschitz bounds derived in our analysis.}

\keywords{Stability analysis, neural networks, deep unrolling, forward-backward, proximal-gradient, inverse problems}

%%\pacs[JEL Classification]{D8, H51}

%%\pacs[MSC Classification]{35A01, 65L10, 65L12, 65L20, 65L70}

\maketitle

\section{Introduction}\label{sec1}

\paragraph{Inverse problems.}
A large variety of inverse problems consists of inverting convolution operators such as signal/image restoration~\cite{Bertero2009,Chouzenoux2012}, tomography~\cite{Wu2020}, Fredholm equation of the first kind~\cite{Arsenault2017}, or inverse Laplace transform~\cite{Cherni2017}.

Let $T$ be a bounded linear operator from a separable Hilbert space $\X$ to a Hilbert space $\Y$.
The problem consists in estimating  $\overline{x} \in \X$ from observed data
 \begin{equation} \label{def:InversePb}
 y = T\overline{x} + w
 \end{equation}
 where $w$ corresponds to an additive measurement noise.
The above problem is often ill-posed i.e., 
 a solution might not exist, 
 might not be unique, 
 or might not depend continuously on the data.

 \paragraph{Variational problem.} 
The well-posedness of the inverse problem defined by~\eqref{def:InversePb} is retrieved by regularization.
Here we consider Tikhonov-type regularization.
Let $\tau \in ]0,+\infty[$ be the regularization parameter.
Solving the inverse problem~\eqref{def:InversePb} with such regularization, leads to the resolution of the following optimization problem:
\begin{equation}
\label{def:varJ0}
\underset{x \in C}{\text{minimize}}\, 
J_{\tau}(x)
\;,
\end{equation}
where 
\begin{equation}\label{e:defJtau}
(\forall x \in \X)\quad  J_\tau(x) = \frac{1}{2} \| Tx - y \|^2 + \frac{\tau}{2} \| D x \|^2,
\end{equation}
$C$ is a nonempty closed convex subset  of $\X$ encoding some prior knowledge, e.g. some range constraint or sparsity pattern,
while $D$ acts as a derivative operator of some order $r \geq 0 $.
Often, we have an a priori of smoothness on the solution 
which justifies the use of such a derivative-based regularization.\footnote{$\|\cdot\|$ denotes the norm of the considered Hilbert space.}
Problem \eqref{def:varJ0} is actually an instance of the more general class of problems stated below, encountered in many signal/image processing tasks:
\begin{equation}
\label{def:varJ}
\underset{x \in \X}{\text{minimize}}\,
J_{\tau}(x)+\mu\, g(x)
\;,
\end{equation}
where $\mu \in [0,+\infty[$ is an additional regularization constant and
$g$ is a proper lower-semicontinuous convex function from Hilbert space 
$\X$ to $]-\infty,+\infty]$. Indeed, Problem \eqref{def:varJ0} corresponds to the case when $g$ is the indicator function $\iota_C$
of set $C$.

\paragraph{Neural network.}
We focus our attention on seeking for a solution to the addressed inverse problem through nonlinear approximation techniques
making use of neural networks. 
Thus, instead of considering the solution to the regularized problem~\eqref{def:varJ},
we define the solution to the inverse problem~\eqref{def:InversePb}
as the output of a neural network, whose structure is similar to  a recurrent network~\cite{Yu2019,Tang2022}. 

Namely, by setting an initial value $x_0$, we are interested in the following $m$-layers neural network
where $m\in \mathbb{N}\setminus\{0\}$:
\begin{equation}
	\label{def:oldmodelNN}
	\begin{cases}
	\textbf{Initialization:} \\
	\quad b_0 = T^* y ,\\
	\textbf{Layer $n\in \{1,\ldots,m\}$:} \\
      \quad x_n 
          = \prox_{\lambda_n \mu_n g}\big(x_{n-1}- \lambda_n (T^*T+D_n^*D_n)x_{n-1} + \lambda_n b_0\big)\;.
    \end{cases}		  
\end{equation}
%where, for every $n\in \{1,\ldots,m\}$,
%\begin{align}
%&R_n = \prox_{\lambda_n \mu_n g}
%\label{e:defRn}\\
%&W_n =  \1 - \lambda_n (T^*T+D_n^*D_n) 
%\label{e:defWn}\\
%&V_n = \lambda_n \1.
%\end{align}
Hereabove, $\prox_{\varphi}$ stands for the proximity operator of a lower-semicontinuous proper convex function $\varphi$ (see Section~\ref{section:notation}) and, for every $n\in \{1,\ldots,m\}$, $D_n$ is a bounded linear operators from $\X$ to some Hilbert space
$\Z$, and  $\lambda_{n}$ is positive real.
Throughout this paper, $L^*$ denotes the adjoint of a bounded linear operator~$L$ defined
on suitable Hilbert spaces.
The overall structure of the network is shown in Figure~\ref{fig:structure}.

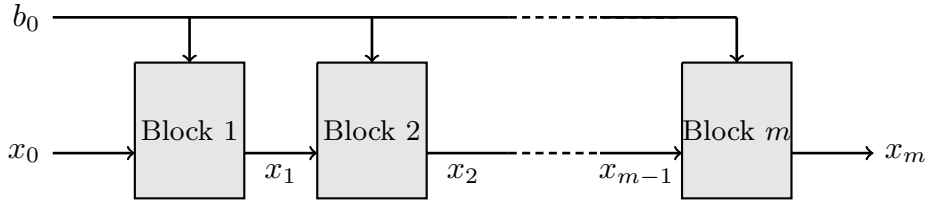
\begin{figure}[h]
\centering
\begin{tikzpicture}[scale=1.2, every node/.style={scale=1.2}]
% Style
\tikzstyle{block}=[thick, fill=gray!20]
% Initialisation
\draw [line width=1pt] (0,1.5) node[left]{$b_0$} ; 
\draw [line width=1pt] (0,0) node[left]{$x_0$} ; 
% Fleche du biais
\draw [line width=1pt] (0,1.5) -- (5,1.5) ;
% ========================================
% BLOCK #1
\draw [line width=1pt][->](0,0) -- (0.9,0) ; % x input block
\draw [line width=1pt][->](1.5,1.5) -- (1.5,1) ;   % biais
\draw [block] (0.9,1) -- (2.1,1) -- (2.1,-0.5) -- (0.9,-0.5) -- (0.9,1) ;
\node[font=\scriptsize] (B1) at (1.5,0.25) {Block $1$};
\draw [line width=1pt][->](2.1,0) -- (2.9,0) ; % x output
\draw (2.5,0) node[below]{$x_1$} ; 
% ========================================
% BLOCK #2
\draw [line width=1pt][->](3.5,1.5) -- (3.5,1) ;   % biais
% Block
\draw [block] (2.9,1) -- (4.1,1) -- (4.1,-0.5) -- (2.9,-0.5) -- (2.9,1) ;
\node[font=\scriptsize] (B2)at(3.5,0.25) {Block $2$};
\draw [line width=1pt](4.1,0) -- (5,0) ; % x output
\draw (4.5,0) node[below]{$x_2$} ; 
% ========================================
% BLOCK (...)
\draw [line width=1pt,densely dashed](5,0) -- (6,0) ; %x (...)
\draw [line width=1pt][->](6,0) -- (6.9,0) ; % x fleche
\draw (6.4,0) node[below]{$x_{m-1}$} ; 
\draw [line width=1pt, densely dashed](5,1.5) -- (6.9,1.5) ; % biais (...)
% Block
\draw [block] (6.9,1) -- (8.1,1) -- (8.1,-0.5) -- (6.9,-0.5) -- (6.9,1) ;
\node[font=\scriptsize] (B3)at(7.5,0.25) {Block $m$};
\draw [line width=1pt] (6,1.5) -- (7.5,1.5) ; % bias till end
\draw [line width=1pt][->](7.5,1.5) -- (7.5,1) ;% biais fleche
% ========================================
% END BLOCK 
\draw [line width=1pt][->](8.1,0) -- (9,0) ; % x fleche
\draw (9,0) node[right]{$x_{m}$} ; 
\end{tikzpicture}
\caption{Global architecture of the $m$-layers neural network~\eqref{def:oldmodelNN}.}
\label{fig:structure}
\end{figure}

A simple choice consists in setting, for every $n\in \{1,\ldots,m\}$, $D_n = \sqrt{\tau_n} D$ where $\tau_n$ is a 
positive constant. The constants $(\mu_n)_{1\le n \le m}$,
$(\tau_n)_{1\le n\le m}$, and possibly $(\lambda_n)_{1\le n \le m}$ can be learned during training. 
Then, Model~\eqref{def:oldmodelNN} can be viewed as unrolling $m$ iterations of an optimization algorithm, so leading to Algorithm~\ref{algo:NN}. If additionally, we set $\mu_n \equiv \mu$ and $\tau_n \equiv \tau$,
Algorithm~\ref{algo:NN} identifies with the forward-backward algorithm~\cite{combettes2005signal,combettes2011proximal} \textcolor{black}{(also called proximal gradient algorithm)}
 applied to the variational problem~\eqref{def:varJ}.

  \begin{algorithm}
 	\caption{Proximal forward-backward splitting method} 
  \label{algo:NN}
 	\begin{algorithmic}[1]
		\State Set $x_0$ ,
 		\For {$n=1,2,\ldots,m$} 
 		    \State Set $ \lambda_n, \tau_n, \mu_n $ ,
 			\State $x_{n} = \text{prox}_{\lambda_n \mu_n g} \; \left( x_{n-1} - \lambda_n \nabla J_{\tau_n} (x_{n-1}) \right) $ ,
 		\EndFor
 		\State \textbf{return} $x_{m}$
 	\end{algorithmic} 
 \end{algorithm}

 Let $\overline{\chi} \ge 0$ denote the modulus of strong convexity of the regularization term $g$ (with $\overline{\chi} > 0$ if $g$ is strongly convex). We have thus
$g = g_0+\overline{\chi}\|\cdot\|^2/2$ where 
$g_0$ is a proper lower-semicontinuous convex function from $\X$ to
$]-\infty,+\infty]$, and  Model~\eqref{def:oldmodelNN} can be equivalently expressed as
\begin{equation}
	\label{def:modelNN}
	\begin{cases}
	\textbf{Initialization:} \\
	\quad b_0 = T^* y ,\\
	\textbf{Layer $n\in \{1,\ldots,m\}$:} \\
      \quad x_n 
          = R_n(W_n x_{n-1} + V_n b_0)\;,
    \end{cases}		  
\end{equation}
where, for every $n\in \{1,\ldots,m\}$,
\begin{align}
&R_n = \prox_{\frac{\lambda_n \mu_n}{1+\lambda_n \chi_n} g_0}
\label{e:defRn}\\
&W_n =  \frac{1}{1+\lambda_n \chi_n}\left(\1 - \lambda_n \big(T^*T+D_n^*D_n)\right)
\label{e:defWn}\\
&V_n = \frac{\lambda_n}{1+\lambda_n \chi_n} \1\label{e:defVn}\\
&\chi_n = \mu_n \overline{\chi}\;, 
\end{align}
and
$\1$ denotes the identity operator.

From a theoretical standpoint, 
little is known about the theoretical properties of 
Model~\eqref{def:modelNN}
in relation with the minimization of the original regularized objective function \eqref{def:varJ}.
The main challenges
are that (i) the number of iterations $m$ is predefined, and (ii) the parameters can vary along the iterations.
%regularized objective function~\eqref{def:varJ}
%there is no guarantee that such a model constitutes a regularizing family, and there is no equivalence between the regularized inverse problem~\eqref{def:varJ} and the output of Model~\eqref{def:modelNN}, since the number of iterations $m$ is fixed in advance. 
However, insight can be gained by quantifying the robustness of Model~\eqref{def:modelNN} to perturbations on its initialization~$x_0$ and on its bias $b_0$, through an accurate estimation of its Lipschitz properties. 

\paragraph{Related works and contributions.}
There has been a plethora of techniques developed to invert models of the form 
\eqref{def:InversePb}.
%integrals of the form \eqref{def:T}.
Among these methods, Tikhonov-type methods are attractive from a theoretical viewpoint, especially because they provide good convergence rate as the noise level decreases, as shown in~\cite{hegland1995} or~\cite{hofmann2005}. 
Optimization techniques \cite{combettes2011proximal} are then classically used to solve Problem \eqref{def:varJ}.
However, limitations of such methods may be encountered in their implementation. 
Indeed, certain parameters such as gradient descent stepsizes or the regularization coefficient
need to be set, as discussed in~\cite{aakesson2008} or~\cite{daun2006}. 
The latter parameter depends on the noise level, as shown in~\cite{Engl1996}, which is not always easy to estimate.
In practical scenarios, methods like the L-curve method (see~\cite{hansen1999}) can be used to set the regularization parameter. \textcolor{black}{However, when dealing with more than one parameter, their application becomes computationally expensive.}
%they require a large number of resolutions and therefore a significant computational cost.
Moreover, incorporating constraints on the solution may be difficult in such approaches, and often reduces to projecting the resulting solution onto the desired set.
These reasons justify the use of a neural network structure to avoid laborious calibration of the parameters and to easily incorporate constraints on the solution.

The use of neural networks for solving inverse problems has become increasingly popular, 
especially in the image processing community.
A rich panel of approaches have been proposed, 
either adapted to the sparsity of the  data~\cite{antholzer2019sparse,kofler2018u}, 
or mimicking variational models~\cite{hammernik2018var,adler2017solving}, or iterating learned operators~\cite{aggarwal2018modl,meinhardt2017,pesquet2020learning,hasannasab2020,galinier2020},
or \textcolor{black}{adapting} Tikhonov method~\cite{li2020nett}.
% and/or using convolution neural network (CNN) whose parameters are trained on data related to the problem at hand~\cite{antholzer2019sparse,jin2017deep}.
 The successful numerical results of the aforementioned works raise two theoretical questions: in cases when these methods are based on the iterations of an optimization algorithm, do they converge (in the algorithmic sense)? Are these inversion methods stable or robust? %Can we topologically characterize the underlying or implicit regularization of such a method?

 In iterative approaches,
a regularization operator is learned, either in the form of a proximity (or denoiser) operator as~\cite{meinhardt2017,aggarwal2018modl,galinier2020}, of a regularization term~\cite{li2020nett},
of a pseudodifferential operator~\cite{bubba2021deep}, 
or of its gradient~\cite{gilton2019,Wu2020}. 
Strong connections also exist with Plug-and-Play methods~\cite{ryu2019plug,sun2019online,pesquet2020learning}, where the regularization operator is a pre-trained neural network.
Such strategies have in particular enabled high-quality imaging restoration or tomography inversion~\cite{Wu2020}. Here, the nonexpansiveness of the neural network is a core property to establish convergence of the algorithm~\cite{Wu2020,pesquet2020learning},
although recent works have provided convergence guarantees using alternative tools~\cite{Hurault2022}.
%But our proposed neural network is not based on this idea.

Other recent works solve linear inverse problems by unrolling the optimization iterative process in the form of a network architecture as in~\cite{Gregor2010,borgerding2016,jin2017deep}.
Here the number of iterations is fixed, instead of iterating until convergence, and the network is often trained in an end-to-end fashion.
Neural network frameworks offer powerful differential programming capabilities for learning hyper-parameters in an unrolled optimization algorithm as in~\cite{Corbineau2020,natarajan2020particle,Monga2021,Gharbi2024,Vy2024}.
\textcolor{black}{With respect to a traditional optimization method applied to \eqref{def:varJ}-\eqref{e:defJtau},
the use of these data-driven techniques provides an increased flexibility leading to a better solution of the inverse problem.}

All of the above strategies have shown very good numerical results.
However, few studies have been conducted on their theoretical properties, especially on their stability.
The study of the robustness of such a structure is often empirical, based on a series of numerical tests, as performed in~\cite{genzel2020robust,Vy2024}.
In~\cite{li2020nett}, stability properties are established at low noise regime when using neural networks regularization.
%they provide very large assumption under which the convergence and the regularization property of their network is ensured. But their result is not subject to verification during the numerical implementation.
A fine characterization of the convergence conditions of recurrent neural network and of their stability via the estimation of a Lipschitz constant is done in~\cite{Combettes2019,Combettes2020}. 
In particular, the Lipschitz constant estimated in~\cite{Combettes2020} is more accurate than in basic approaches 
which often rely in computing the product of the norms of the linear weight operators
of each layer as in~\cite{serrurier2020achieving,cisse2017parseval}.
Thanks to the aforementioned works, proofs of stability/convergence have been demonstrated on specific neural networks applied to classification (e.g., \cite{Neacsu2024, Neacsu2024-2}), or inverse problems (e.g.,~\cite{pesquet2020learning,hasannasab2020,Corbineau2020}). The analysis carried out in this article is in the line of these references.
%It is in the continuity of this type of results that this work is situated.
% In~\cite{li2020nett} and in~\cite{schwab2019deep}, Tikhonov regularization strategies are developed for an iterative neural network 
% and convergences properties of the network are shown, but little is known about the characterization of the limit point and 
% robustness properties.

Our contributions in this paper are the following. 
\begin{enumerate}
    \item We propose a neural network architecture to solve the inverse problem~\eqref{def:InversePb}, incorporating the proximity operator of a potentially nonsmooth convex function as its activation function. This design enables the imposition of constraints on the sought solution. A key advantage of this architecture lies in its foundation on the unrolling of a forward-backward algorithm, which ensures the network structure is both interpretable and frugal in terms of the number of parameters to be learned.
    \item We study theoretically and numerically the stability of the so-built neural network. The sensitivity analysis is conducted with respect to the observed data  $y$, which corresponds to a bias term in each layer of Model~\eqref{def:modelNN}.
This analysis is more general than the one performed in \cite{Corbineau2020}, in which only the impact of the initialization was considered.
%    \item We show how to implement the neural network in the case of Abel operators. Such operators arise in various physical applications. The proposed 
%    neural network performs numerically well compared to other classical inversion methods. Neural network techniques have been widely applied to imaging inverse problems, but few are tested on experimental 1D signal inverse problems.
\end{enumerate}

\paragraph{Outline.} 
The outline of the paper is as follows. In Section~\ref{section:notation}, we recall the theoretical background of our work and specify the notation, which follows the framework in~\cite{bauschke2011}.
Section~\ref{section:architecture} introduces a class of dynamical systems with a leakage factor, providing a more general framework than the neural network defined by~\eqref{def:modelNN}. 
 In Section~\ref{section:theory}, we establish the stability of the corresponding neural network defined on the product space 
$\X\times \X$, building upon the results from~\cite{Combettes2019} and~\cite{Combettes2020}.
By stability, we mean that the network output remains controlled with respect to both its initial input $x_0$
  and the bias term $T^*y$, ensuring that small differences or errors in these vectors are not amplified through the network. 
  We also provide sufficient conditions for the averagedness of the network. Finally, we show how stability properties for network \eqref{def:modelNN} can be deduced from these results.
Section~\ref{section:numerics} presents numerical illustrations of the derived Lipschitz bounds, followed by concluding remarks in Section~\ref{se:conclusion}.

\section{Notation}
\label{section:notation}

We introduce the elements of convex analysis
we will be dealing with, in Hilbert spaces. We also cover the bits of monotone operator theory that will be needed throughout. 

Let us consider the Hilbert space $\X$ 
endowed with the norm $\| \cdot\|$ and the scalar product $\langle \cdot, \cdot \rangle$. 
In the following, 
$\X$ shall refer to the underlying signal space.
%spaces of functions defined on the interval $]0,1[$ although the developments in Section \ref{section:theory} are valid for arbitrary Hilbert spaces.
The notation $\|\cdot\|$ will also refer to the operator norm of bounded operators from $\X$ onto $\X$.
%The identity operator over $\X$ will be referred to as $\1$.

An operator $S \colon \X \to \X $ is nonexpansive if it 
is $1-$Lipschitz, that is 
\[
(\forall (x,y) \, \in \, \X \times\X)
\qquad
\|Sx-Sy\| \leq \|x-y\| 
\; .
\]
Moreover, $S$ is said to be
%%%%%%%%%%%%%%%%%
\begin{enumerate}
    \item firmly nonexpansive if
\[
(\forall (x,y) \, \in \, \X \times\X)
\qquad
\| Sx -Sy\|^2
+
\| (\1-S)x -(\1-S)y\|^2
\leq \| x-y\|^2 
\; ;
    \]
    \item a Banach contraction if there exists $\kappa\in ]0,1[$ such that
\begin{equation}\label{e:strictcont}
(\forall (x,y) \, \in \, \X \times\X)
\qquad
\| Sx -Sy\| \le \kappa \| x-y\| 
\; .
\end{equation}
\end{enumerate}

If $S$ is a Banach contraction, 
then the iterates $(S^n x)_{n \in \N}$ converge linearly to a fixed point of $S$
according to Picard's theorem.
On the other hand, when $S$ is nonexpansive, 
the convergence is no longer guaranteed.
A way of recovering the convergence of the iterates 
is to assume that  $S$ is averaged,
i.e., 
there exists $\alpha \in ]0,1[$ 
and a nonexpansive operator $R:\X \to \X$ 
such that $S= (1-\alpha) \1+\alpha R$.
In particular, $S$ is $\alpha-$averaged
if and only if
\[
(\forall (x,y) \, \in \, \X \times\X)
\qquad
\| Sx -Sy\|^2
+
\frac{1-\alpha}{\alpha} \|(\1-S)x -(\1-S)y\|^2
\leq \| x-y\|^2 
\; .
\]
If $S$ has a fixed point and it is averaged, 
then the iterates $(S^n x)_{n \in \N}$ converge weakly to a fixed point.
Note that
$S$ is firmly nonexpansive if and only
if it is $1/2-$averaged and that, if $S$ satisfies \eqref{e:strictcont} with $\kappa \in ]0,1[$,
then it is $(\kappa+1)/2$-averaged.

Let $\Gamma_0(\X)$ be the set of proper lower semicontinuous convex functions from $\X$ to $]-\infty,+\infty]$.
Then we define the proximity operator \cite[Chapter 24]{bauschke2011} as
follows.
\begin{definition}
Let $f \in \Gamma_0(\X)$,
$x \in \X$,
and $\gamma>0$.
Then
$\prox_{\gamma f} (x)$ is the unique point that satisfies
\[
\prox_{\gamma f} (x) \,
= \, 
\underset{y \in \X }{\operatorname{argmin}} \, \left(
f(y) + \frac{1}{2\gamma} \| x-y \|^2
\right)
\; .
\]
The function $\prox_{\gamma f} : \X \to \X $ is the proximity operator of $\gamma f$.
\end{definition}
Finally, the proximity operator has the following property.
\begin{proposition}{\rm \cite[Proposition 12.28]{bauschke2011}}
The operators
$\prox_{\gamma f}$ and $\1-\prox_{\gamma f}$
are firmly nonexpansive.
\end{proposition}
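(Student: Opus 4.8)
The plan is to reduce the squared-norm definition of firm nonexpansiveness to an equivalent inner-product inequality and then to exploit the variational characterization of the proximity operator. Writing $S = \prox_{\gamma f}$, and setting $a = x-y$, $b = Sx - Sy$, a direct expansion using $(\1-S)x - (\1-S)y = a - b$ gives
$\|x-y\|^2 - \|Sx-Sy\|^2 - \|(\1-S)x-(\1-S)y\|^2 = 2\langle a, b\rangle - 2\|b\|^2$.
Hence it suffices to prove the single inequality $\langle x-y,\, Sx-Sy\rangle \ge \|Sx-Sy\|^2$ for all $x,y \in \X$.

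To obtain this, I would first establish the first-order optimality condition satisfied by $p = \prox_{\gamma f}(x)$. Since $p$ minimizes the convex function $z \mapsto f(z) + \frac{1}{2\gamma}\|x - z\|^2$, comparing its value at $p$ with its value at $p + t(z-p)$ for $t \in (0,1]$ and $z \in \operatorname{dom} f$, using convexity of $f$ to bound $f\big(p + t(z-p)\big) \le f(p) + t\big(f(z) - f(p)\big)$, then dividing by $t$ and letting $t \to 0^+$, yields the variational inequality $\langle x - p,\, z - p\rangle \le \gamma\big(f(z) - f(p)\big)$ for every $z \in \X$ (the case $z \notin \operatorname{dom} f$ being trivial). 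The same argument applied to $q = \prox_{\gamma f}(y)$ gives $\langle y - q,\, z - q\rangle \le \gamma\big(f(z) - f(q)\big)$.

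Next I would specialize these two inequalities by taking $z = q$ in the first and $z = p$ in the second, and add them. The function values cancel and one is left with $\langle (x - p) - (y - q),\, p - q\rangle \ge 0$, i.e. $\langle x - y,\, p - q\rangle \ge \|p - q\|^2$. Since $p - q = Sx - Sy$, this is exactly the sought inner-product inequality, so $S = \prox_{\gamma f}$ is firmly nonexpansive. Finally, the defining inequality is symmetric under the exchange $S \leftrightarrow \1 - S$, because $\1 - (\1-S) = S$ and the two squared-norm terms merely swap roles; thus firm nonexpansiveness of $\prox_{\gamma f}$ immediately entails that of $\1 - \prox_{\gamma f}$.

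The main obstacle is the nonsmoothness of $f$: one cannot simply differentiate the objective, so the optimality condition must be extracted through the one-sided difference-quotient argument above, which is precisely where convexity of $f$ and of its domain are used. This step is equivalent to showing $\gamma^{-1}(x - p) \in \partial f(p)$ and invoking monotonicity of $\partial f$; everything else is bookkeeping with the inner product.
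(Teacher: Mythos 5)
Your proof is correct and is essentially the standard argument: the paper itself gives no proof but cites \cite[Proposition~12.28]{bauschke2011}, whose derivation likewise passes through the variational characterization $\langle x-p,\,z-p\rangle+\gamma f(p)\le\gamma f(z)$ of $p=\prox_{\gamma f}(x)$, adds the two inequalities at $z=q$ and $z=p$ to get $\langle x-y,\,p-q\rangle\ge\|p-q\|^2$, and notes the symmetry of the firm-nonexpansiveness inequality under $S\leftrightarrow\1-S$. All steps, including the one-sided difference-quotient derivation of the optimality condition for nonsmooth $f$, are sound.
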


In the proposed neural network~\eqref{def:modelNN}, 
the activation operator is a proximity operator. 
In practice, this is the case for most activation operators, as shown in~\cite{Combettes2019}.
The neural network~\eqref{def:modelNN} is thus a cascade of firmly nonexpansive operators and linear operators. 
If the linear part is also nonexpansive, bounds on the effect of a perturbation of the neural network or its iterates can be established.

%%%%%%%%%%%%%%%%%%%%%%%%%%%%%%%%%%%%%%%%%%%%%%%%%%%%%%%%%%%%%%%%%%%%%%%%%%%%%%%%%%%%%%%%%%%%%%%%%%%%%%%%%%%%%%%%%%%%%%%%%%%%%%%%%
\section{Considered neural network architecture}
\label{section:architecture}
%
%%%%%%%%%%%%%%%%%%%%%%%%%%%%%%%%%%%%%%%%%%
In this section, 
Model~\eqref{def:modelNN}
is reformulated as a virtual network, which takes as inputs the classical ones on top of a new one, which is a bias parameter.

\subsection{Virtual neural network with leakage factor}\label{se:VNN1}
To facilitate our theoretical analysis, we will introduce a virtual network making use of new variables $(z_n)_{n\in \mathbb{N}}$ in the product space $\X \times \X$. 
For every $n \in \mathbb{N}\setminus\{0\}$,
we define the $n$-th layer of our virtual network as follows:
\begin{equation}
    \label{def:nn-virtual-bias}
%    (\forall n \in \mathbb{N})\quad
z_n = \left( \begin{array}{c}
 x_n \\
 b_n
\end{array}
\right) ,
\quad
z_{n} = Q_n(U_n z_{n-1})
\; ,
\quad
\text{with}
\quad
\begin{cases}
\displaystyle Q_n = \left( \begin{array}{c}
  R_n   \\
  \1
\end{array} \right)
\; , \\
\\
\displaystyle U_n = \left( \begin{array}{cc}
   W_n  &  %\displaystyle 
   V_n
   %\frac{\lambda_n}{1+\lambda_n\chi_n} \1 
   \\
   0    & \eta_n \1
\end{array}\right)
\;,
\end{cases}
\end{equation}
and $W_n$ (resp. $V_n$) defined by
\eqref{e:defWn} (resp. \eqref{e:defVn}).
In our context,
$R_n$ is defined by \eqref{e:defRn}, but all the results in our stability analysis remain
valid for any firmly nonexpansive operator such as the resolvent of a maximally monotone operator.
%\begin{align}
%&R_n = \text{prox}_{\lambda_n \mu_n g}\\
%&W_n =  \1 - \lambda_n T^*T - \lambda_n %\tau_n D^*D.
%\end{align}
%where $\1$ denotes the identity operator.
Note that, in order to gain more flexibility, we have 
 included positive multiplicative factors $(\eta_n)_{n\geq 1}$ on the bias.
 \textcolor{black}{Since, for every $n\ge 1$, $b_n = \eta_n\ldots\eta_1 b_0$,}
 cascading $m$ such layers yields
  \begin{equation}
	\label{def:modelNN-leakage}
	\begin{cases}
	\textbf{Initialization:} \\
	\quad b_0 = T^*y ,\\
	\textbf{Layer $n\in \{1,\ldots,m\}$:} \\
      \quad x_n %=  Q_n(x_0,x_{n-1}) 
          = R_n(W_n x_{n-1} + \widetilde{V}_n b_0)\\
          \qquad\;\, = \prox_{\lambda_n \mu_n g}\big(x_{n-1}- \lambda_n (T^*T+D_n^*D_n)x_{n-1} + \lambda_n \eta_{n-1}\cdots \eta_0 b_0\big)      
          \;,
    \end{cases}		  
\end{equation}
where 
\begin{equation}
%&R_n = \text{prox}_{\lambda_n \mu_n g}\\
%&W_n =  \1 - \lambda_n T^*T - \lambda_n \tau_n D^*D \\
%&
\widetilde{V}_n = \textcolor{black}{\frac{\lambda_n}{1+\lambda_n \chi_n}} \eta_{n-1}\cdots \eta_0 \1 \quad \text{and}\quad \eta_{0}=1\;.
\end{equation}
%and $\eta_{0}=1$.
We thus see that the  network defined by Model~\eqref{def:modelNN} 
is equivalent to the virtual one when all the factors $\eta_{n}$ are equal to one.
When $n\ge 1$ and $\eta_{n}< 1$,
the  parameters $\eta_{n}$
can be interpreted as a leakage factor. 
\begin{remarque}
In the original forward-backward algorithm, the introduction of $(\eta_{n})_{n\geq 1}$ amounts to introducing an error $e_n$ in the gradient step, at iteration $n$, which is equal to
\begin{equation}
    e_n = \textcolor{black}{\frac{\lambda_n}{1+\lambda_n \chi_n}} (\eta_{n-1}\cdots \eta_0-1) b_0.
\end{equation}
From known properties 
concerning the forward-backward algorithm~{\rm \cite{combettes2011proximal}}, 
the standard convergence results for the algorithm are still valid provided that
\begin{equation}
    \sum_{n=2}^{+\infty} \textcolor{black}{\frac{\lambda_n}{1+\lambda_n \chi_n}} |\eta_{n-1}\cdots \eta_0-1| < +\infty
    \; .
\end{equation}
\end{remarque}

\subsection{Multilayer architecture}\label{se:VNN1m}

When we cascade the layers of the virtual neural network~\eqref{def:nn-virtual-bias},
the following triangular linear operator plays a prominent role:
%£In our subsequent analysis, it will be useful to define the triangular linear operator
\begin{equation}
    \label{def:U-bias}
U = \;U_m \circ \cdots \circ U_1 
  = \;
\left( \begin{array}{cc}
  W_{1,m}   &  \widetilde{W}_{1,m}\\
  0   &  \eta_{1,m} \1
\end{array} \right)
\; ,
\end{equation}
where, for every  $n\in \{1,\ldots,m\}$ and $i\in \{1,\ldots,n\}$ 
\begin{equation}\label{e:defWtin}
\widetilde{W}_{i,n} =  
\sum_{j=i}^{n} \frac{\lambda_{j}}{1+\lambda_j \chi_j} \eta_{i,j-1}W_{j+1,n}\, 
\end{equation}
%\begin{align}
%\widetilde{W} &=   
%  \sum_{n=1}^{m}  \lambda_{n} \eta_{1,n-1} W_{n+1,m} 
%  \end{align}
and, for every $i\in \{1,\ldots,m+1\}$
and  $j\in \{0,\ldots,m\}$,
\begin{align}
&   W_{i,j} = 
\begin{cases}
W_j\circ \cdots \circ W_i & \mbox{if $j\ge i$}\\
\1 & \mbox{otherwise,}
\end{cases}\label{def:WjWi}\\
&    \eta_{i,j} = 
    \begin{cases}
    \eta_{j}\cdots \eta_{i} & \mbox{if $j\ge i$}\\
    1 & \mbox{otherwise.}
    \end{cases}\label{def:N-bias}
\end{align}
In particular, $W_{i,i} = W_i$ and $\eta_{i,i} = \eta_i$.

\section{Stability and $\alpha$-averagedness}
\label{section:theory}
%
%%%%%%%%%%%%%%%%%%%%%%%%%%%%%%%%%%%%%%%%%%
%In this section, we study the stability of the proposed neural network~\eqref{def:modelNN}. 
%This analysis is performed by estimating the Lipschitz constant of the network, and by determining under which conditions  $\alpha$-averagedness is achieved. 

\textcolor{black}{
This section presents our stability results. Section~\ref{se:assume} introduces the main assumptions and defines key quantities relevant to our analysis. Section~\ref{se:stabresVN} focuses on the stability analysis of the Virtual Neural Network defined in~\eqref{def:nn-virtual-bias}, considered as an operator $S$ from $\X \times \X$ to itself, with input $(x_0,b_0)$ and output $(x_m,b_m)$. The core result in Proposition~\ref{p:LipVNN-bias} provides an estimate $\theta_m$ of its Lipschitz constant, which can be computed with quadratic complexity. Additionally, we derive simpler upper and lower bounds for the Lipschitz constant. Proposition~\ref{prop:ab-bias} offers sufficient conditions for the VNN to be averaged. Notably, in the absence of a leakage factor, the VNN cannot be nonexpansive, as shown in Proposition~\ref{prop:leakage}. 
First estimates of the Lipschitz constant for network~\eqref{def:modelNN} are derived in Section~\ref{se:directlink}. Section~\ref{se:x0b0xm} aims to refine the previous Lipschitz  bounds by focusing on the first output of the VNN, so coming back to the original network~\eqref{def:modelNN}. This enables, in Section~\ref{se:netsingisingo}, an investigation of the stability of the network when its initialization $x_0$ is linked to the observation $y$ through a filtering relation—thus defining an operator from $\X$ to $\X$ (see Proposition~\ref{p:LipVNN-proj2}).
}

\subsection{Assumptions}\label{se:assume}
We will make the following assumption on the degradation operator $T$ and the regularization operators $(D_n)_{1\le n \le m}$:\\
\fbox{
Operators $T^*T$ and  $(D_n^*D_n)_{1\le n \le m}$
can be diagonalized
in an orthonormal basis
$(v_{p})_{p}$ of $\X$.
%in the same orthonormal set of eigenvectors $(v_{p})_{p}$.
}

The existence of such an eigenbasis is satisfied in the following cases:
\begin{itemize}
\item $T$ and $(D_n)_{1\le n \le m}$ are compact operators (which guarantees the existence of their singular value decompositions),
and $(D_n^*D_n)_{0\le n \le m}$ commute pairwise if we set
$D_0 = T$.
\item $\X=\Y$ is the space $\mathbb{L}^2([0,\mathcal{T}])$ of square summable complex-valued functions defined on $[0,\mathcal{T}]$,
$T$ is a filter (i.e., circular convolutive operator) and, for every $n\in \{1,\ldots,m\}$, $D_n$ is a single-input $c$-output  filter that maps every signal $x\in \X$ to a vector $(D_{n,1}x,\ldots,D_{n,c} x) \in \X^c$. In this case, 
the diagonalization is performed in the Fourier basis defined as
\[
(\forall p \in \mathbb{Z})
(\forall t \in [0,\mathcal{T}])
\quad v_p(t) = \frac{1}{\sqrt{\mathcal{T}}}\exp\left(2\pi\imath \frac{p t}{\mathcal{T}}\right).
\]
\item $\X=\Y$ is the space $\ell^2(N)$
of complex-valued sequences $x=(x_n)_{0\le n \le N-1}$ equipped with the standard Hermitian norm, $T$ is a discrete periodic filter
and, for every $n\in \{1,\ldots,m\}$, $D_n$ is a single-input $c$-output  periodic filter that maps every signal $x\in \X$ to a vector $(D_{n,1}x,\ldots,D_{n,c} x) \in \X^c$. The eigenbasis is associated with the discrete Fourier transform:
\[
(\forall (p,n) \in \{0,\ldots,N-1\}^2)
\quad v_p(n) = \frac{1}{\sqrt{N}}\exp\left(2\pi\imath \frac{pn}{N}\right).
\]
\item The two previous examples extend to
$d$-dimensional signals ($d=2$ for images),
defined on $[0,\mathcal{T}_1]\times\cdots\times [0,\mathcal{T}_d]$
or $\{0,\ldots,N_1-1\}\times\cdots\times \{0,\ldots,N_d-1\}$.
\end{itemize}

Based on the above assumption,
we define the respective nonnegative real-valued eigenvalues
$(\beta_{T,p})_p$ and $(\beta_{D_n,p})_p$ of $T^*T$ and $D_n^*D_n$ with $n\in\{1,\ldots,m\}$, as well as 
the following quantities, 
for every eigenspace index $p$ and $i\in \{1,\ldots,n\}$,
\begin{align}
\label{def:vp0}
     &\beta_p^{(n)} = \frac{1}{1+\lambda_n\chi_n}\Big(1 - \lambda_n (\beta_{T,p} +\beta_{D_n,p})\Big) \; ,\\
\label{def:vp1}
     &\beta_{i,n,p} = \prod_{j=i}^n \beta_p^{(j)} \; , \\ 
\label{def:vp-bias}
     &  \widetilde{\beta}_{i,n,p} =   \sum_{j=i}^{n-1} \beta_p^{(n)} \cdots \beta_p^{(j+1)} \frac{\lambda_{j}\eta_{i,j-1}}{1+\lambda_j\chi_j}
     + \frac{\lambda_n \eta_{i,n-1}}{1+\lambda_n\chi_n}
     \;, %\label{def:betatinp}
\end{align}
with the convention $\sum_{i=n}^{n-1} \cdot = 0$.
As limit cases, we have
\begin{equation}\label{e:limitbetann}
\beta_{n,n,p} = \beta_p^{(n)}, \quad \widetilde{\beta}_{n,n,p} = \frac{\lambda_n}{1+\lambda_n\chi_n}.
\end{equation}
Note that $(\beta_p^{(n)},v_{p})_{p}$,
$(\beta_{i,n,p},v_p)_{p}$,
and $(\widetilde{\beta}_{i,n,p},v_p)_{p}$
are the eigensystems of $W_{n}$,
$W_{i,n}$ and~$\widetilde{W}_{i,n}$, 
defined by \eqref{e:defWn}, \eqref{def:WjWi}, and \eqref{e:defWtin}, 
respectively.

\subsection{Stability results for the virtual network}\label{se:stabresVN}
We first recall some existing results on the stability of neural networks \cite[Proposition~3.6(iii)]{Combettes2019} \cite[Theorem 4.2]{Combettes2020}.
\begin{proposition}\label{prop:oldres}
Let $m > 1$ be an integer, let $(\mathcal{H}_i)_{0 \leq i \leq m}$ be non null Hilbert spaces.
For every $n \in \{1,\ldots,m\}$, let $U_n$ be a bounded linear operator from $\mathcal{H}_{n-1}$ to $\mathcal{H}_{n}$
and let $Q_n\colon \mathcal{H}_n \to \mathcal{H}_n$ be a firmly nonexpansive operator.
Set $U = U_{m} \circ \cdots \circ U_1$ and
\begin{multline}
\label{e:defthetaell}
\theta_m=\|U\|\\
+\sum_{k=1}^{m-1}\sum_{1\leq j_1<\ldots<j_k\leq m-1}
\|U_m\circ\cdots\circ U_{j_k+1}\|\,
\|U_{j_k}\circ\cdots\circ U_{j_{k-1}+1}\|\cdots 
\|U_{j_1}\circ\cdots\circ U_1\|.
\end{multline}
Let $S = Q_m\circ U_m \circ \cdots \circ Q_1 \circ U_1$.
Then the following hold:
\begin{enumerate}
    \item\label{prop:oldresi} $\theta_m/2^{m-1}$ is a Lipschitz constant of $S$.
    \item\label{prop:oldresii} 
    Let $\alpha \in [1/2, 1]$.
    If $\mathcal{H}_m = \mathcal{H}_0$ and 
    \begin{equation}\label{e:condalphaold}
\| U - 2^{m} (1- \alpha) \1 \|
- \|U\|
+2 \theta_{m}
\leq
2^{m} \alpha
\;,
\end{equation}
then $S$ is $\alpha$-averaged.
\end{enumerate}
\end{proposition}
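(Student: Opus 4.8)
The plan is to reduce everything to tracking how a difference of inputs propagates through the layers, exploiting that each firmly nonexpansive $Q_n$ can be written as $Q_n = \tfrac12(\1 + R_n)$ with $R_n = 2Q_n - \1$ nonexpansive (the equivalence between firm nonexpansiveness and $1/2$-averagedness recalled in Section~\ref{section:notation}). Fix $x,y$, let $u_n,v_n$ be the outputs of the first $n$ layers applied to $x$ and $y$, and set $d_n = u_n - v_n$, so $d_0 = x-y$. The splitting of $Q_n$ yields the recursion $d_n = \tfrac12 U_n d_{n-1} + \tfrac12 \rho_n$, where $\rho_n$ denotes the difference of $R_n\circ U_n$ evaluated at the two layer-$(n-1)$ outputs; by nonexpansiveness of $R_n$ we have $\|\rho_n\| \le \|U_n d_{n-1}\|$. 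First I would unfold this recursion to obtain $d_m = \tfrac{1}{2^m} U d_0 + \sum_{n=1}^m \tfrac{1}{2^{m-n+1}} (U_m\circ\cdots\circ U_{n+1})\rho_n$, which isolates the purely linear contribution $\tfrac{1}{2^m}Ud_0$ from the nonlinear corrections carried by the $\rho_n$.

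For (\ref{prop:oldresi}) the $\|\rho_n\|$ must be controlled. Substituting the unfolded expression for $d_{n-1}$ into $\|\rho_n\| \le \|U_n d_{n-1}\|$ produces a triangular system of scalar inequalities for the $\|\rho_n\|$; solving it and collecting terms, each resulting summand is indexed by the set of layers $j_1 < \cdots < j_k$ at which the reflection branch was taken and equals the product of the operator norms of the intervening linear blocks $U_{j_{i+1}}\circ\cdots\circ U_{j_i+1}$ appearing in (\ref{e:defthetaell}). The key bookkeeping fact, which I expect to be the main obstacle, is that \emph{every} such chain enters the bound for $\|d_m\|$ with the \emph{same} coefficient $1/2^{m-1}$ (one checks this on small $m$ and then confirms it by induction on the number of layers). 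Granting this, $\|d_m\| \le \tfrac{1}{2^{m-1}}\theta_m \|d_0\|$, which is exactly (\ref{prop:oldresi}). A by-product of the same computation is the bound $\sum_{n=1}^m \tfrac{1}{2^{m-n+1}} \|U_m\circ\cdots\circ U_{n+1}\|\,\|\rho_n\| \le \tfrac{1}{2^m}(2\theta_m - \|U\|)\|d_0\|$ on the nonlinear part alone, obtained by subtracting the explicit linear-term contribution $\tfrac{1}{2^m}\|U\|\,\|d_0\|$ from the total.

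For (\ref{prop:oldresii}), since $\mathcal{H}_m = \mathcal{H}_0$, being $\alpha$-averaged is equivalent to $\tfrac1\alpha(S - (1-\alpha)\1)$ being nonexpansive, i.e. to $\|d_m - (1-\alpha)d_0\| \le \alpha \|d_0\|$. I would rewrite the unfolded expression as $d_m - (1-\alpha)d_0 = \tfrac{1}{2^m}\big(U - 2^m(1-\alpha)\1\big)d_0 + \sum_{n=1}^m \tfrac{1}{2^{m-n+1}}(U_m\circ\cdots\circ U_{n+1})\rho_n$, apply the triangle inequality, bound the first term by $\tfrac{1}{2^m}\|U - 2^m(1-\alpha)\1\|\,\|d_0\|$, and bound the reflection sum by the by-product estimate $\tfrac{1}{2^m}(2\theta_m - \|U\|)\|d_0\|$ established above. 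This yields $\|d_m - (1-\alpha)d_0\| \le \tfrac{1}{2^m}\big(\|U - 2^m(1-\alpha)\1\| + 2\theta_m - \|U\|\big)\|d_0\|$, and requiring the bracket to be at most $2^m\alpha$ is precisely condition (\ref{e:condalphaold}); hence $S$ is $\alpha$-averaged. The only genuinely delicate part of the whole argument is thus the combinatorial identification in (\ref{prop:oldresi}); part (\ref{prop:oldresii}) then follows essentially for free from the same unfolding, the role of the hypothesis $\alpha \in [1/2,1]$ being consistent with the fact that a composition of $1/2$-averaged operators cannot be expected to be better than $1/2$-averaged in general.
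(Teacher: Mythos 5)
Your proposal is essentially correct, but note that the paper does not prove this statement at all: it is explicitly \emph{recalled} from \cite[Proposition~3.6(iii)]{Combettes2019} and \cite[Theorem 4.2]{Combettes2020}, so there is no in-paper proof to compare against. What you have written is a faithful, self-contained reconstruction of the argument underlying those references: split each firmly nonexpansive $Q_n$ as $\tfrac12(\1+R_n)$, unfold the difference recursion $d_n=\tfrac12 U_n d_{n-1}+\tfrac12\rho_n$ with $\|\rho_n\|\le\|U_n d_{n-1}\|$, and track coefficients. I checked your key bookkeeping claim and it is right, though your phrasing (``every chain enters with coefficient $1/2^{m-1}$'') hides the actual mechanism: each individual expansion path carries weight $2^{-m}$, and each product $\|U_m\circ\cdots\circ U_{j_k+1}\|\cdots\|U_{j_1}\circ\cdots\circ U_1\|$ indexed by $J=\{j_1<\cdots<j_k\}\subseteq\{1,\ldots,m-1\}$ is produced by \emph{exactly two} such paths (one whose last reflection occurs at layer $m$, one whose last reflection occurs at layer $j_k$; for $J=\emptyset$ the second path is the pure linear term), whence the factor $2\cdot 2^{-m}=2^{1-m}$. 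Making this two-paths-per-subset observation explicit would turn your ``check on small $m$ and induct'' into a one-line count and would also directly justify your by-product bound $\sum_{n}2^{-(m-n+1)}\|U_m\circ\cdots\circ U_{n+1}\|\,\|\rho_n\|\le 2^{-m}(2\theta_m-\|U\|)\|d_0\|$, which is exactly what part~\ref{prop:oldresii} needs. The deduction of averagedness from \eqref{e:condalphaold} via the nonexpansiveness of $\alpha^{-1}(S-(1-\alpha)\1)$ is then correct as you state it; the restriction $\alpha\in[1/2,1]$ plays no role in the implication itself and only reflects that the condition is vacuous for smaller $\alpha$.
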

In light of these results, 
we will now analyze the properties of the virtual network~\eqref{def:nn-virtual-bias}
based on the spectral quantities
defined at the end of Section \ref{se:assume}, 
%defined by~\eqref{def:T} and~\eqref{def:D},
and the parameters $(\lambda_n)_{1\le n\le m}$, $\overline{\chi}$ (and possibly $(\tau_n)_{1\le n \le m}$). 
One of the main difficulties with respect to the case already studied by~\cite{Corbineau2020}
is that, in our context, the involved operators $(U_n)_{1\le n \le m}$ are no longer self-adjoint.

A preliminary result will be needed:
\begin{lemme}\label{e:normUni-bias}
Let $m \in \mathbb{N}\setminus \{0\}$ be 
the total number of layers.
For every layer indices $n\in \{1,\ldots,m\}$ and 
$i\in \{1,\ldots,n\}$, 
the norm of $U_n\circ \cdots \circ U_i$ is 
equal to $\sqrt{a_{i,n}}$ with
\begin{equation}\label{e:defain-bias}
a_{i,n} = \frac{1}{2} \sup_{p}
\left(
\beta_{i,n,p}^2 + \widetilde{\beta}_{i,n,p}^2 + \eta_{i,n}^2+ \sqrt{(\beta_{i,n,p}^2  + \widetilde{\beta}_{i,n,p}^2  +  \eta_{i,n}^2 )^2
        - 4\beta_{i,n,p}^2 \eta_{i,n}^2}
\right) \; ,
\end{equation}
where $p$ indexes the eigenspaces of $T^*T$. %defined by~\eqref{def:T}.
\end{lemme}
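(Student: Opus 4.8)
The plan is to exploit the upper-triangular block structure of each $U_n$ together with the simultaneous diagonalizability assumption, so as to reduce the computation of the operator norm of $U_n\circ\cdots\circ U_i$ to a family of $2\times 2$ singular-value problems indexed by the eigenspaces of $T^*T$.

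First I would establish, by induction on $n$, that the composition $U_n\circ\cdots\circ U_i$ retains the block-triangular form
\[
U_n\circ\cdots\circ U_i = \begin{pmatrix} W_{i,n} & \widetilde{W}_{i,n}\\ 0 & \eta_{i,n}\1\end{pmatrix},
\]
with $W_{i,n}$, $\widetilde{W}_{i,n}$, $\eta_{i,n}$ as in \eqref{def:WjWi}, \eqref{e:defWtin}, \eqref{def:N-bias}. The inductive step is the block product $U_n\circ(U_{n-1}\circ\cdots\circ U_i)$, where one uses $W_n W_{j+1,n-1}=W_{j+1,n}$ and $W_{n+1,n}=\1$ to recover the recursion defining $\widetilde{W}_{i,n}$; this is the bookkeeping part, already recorded by \eqref{def:U-bias} in the case $i=1$, $n=m$.

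Next comes the decisive structural observation. Under the boxed assumption every $W_j$ is diagonal in the orthonormal basis $(v_p)_p$, hence so are $W_{i,n}$ and $\widetilde{W}_{i,n}$, with respective eigenvalues $\beta_{i,n,p}$ and $\widetilde{\beta}_{i,n,p}$ as in \eqref{def:vp1}--\eqref{def:vp-bias} (see the closing remark of Section~\ref{se:assume}). Consequently the operator above leaves invariant each two-dimensional subspace $\mathcal{P}_p:=\operatorname{span}\{(v_p,0),(0,v_p)\}$, and $\X\times\X=\bigoplus_p \mathcal{P}_p$ is an orthogonal direct sum. The operator therefore splits as an orthogonal direct sum of its restrictions, so its norm equals $\sup_p\|M_p\|$, where $M_p$ is the matrix of the restriction to $\mathcal{P}_p$, namely the real upper-triangular matrix
\[
M_p = \begin{pmatrix} \beta_{i,n,p} & \widetilde{\beta}_{i,n,p}\\ 0 & \eta_{i,n}\end{pmatrix}.
\]
Finally I would compute $\|M_p\|^2$ as the largest eigenvalue of $M_p^*M_p$: its trace is $\beta_{i,n,p}^2+\widetilde{\beta}_{i,n,p}^2+\eta_{i,n}^2$ and its determinant is $\beta_{i,n,p}^2\eta_{i,n}^2$, whence
\[
\|M_p\|^2=\tfrac12\Big(\beta_{i,n,p}^2+\widetilde{\beta}_{i,n,p}^2+\eta_{i,n}^2+\sqrt{(\beta_{i,n,p}^2+\widetilde{\beta}_{i,n,p}^2+\eta_{i,n}^2)^2-4\beta_{i,n,p}^2\eta_{i,n}^2}\Big),
\]
and taking the supremum over $p$ yields $\|U_n\circ\cdots\circ U_i\|^2=a_{i,n}$, i.e. $\|U_n\circ\cdots\circ U_i\|=\sqrt{a_{i,n}}$. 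Note that all the quantities $\beta_{i,n,p}$, $\widetilde{\beta}_{i,n,p}$, $\eta_{i,n}$ are real (they are built from eigenvalues of the self-adjoint operators $W_j$ and from positive reals), so $M_p$ is a real matrix and its largest singular value coincides with its operator norm on the complex eigenspace.

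The main obstacle is ensuring that the reduction to eigenspaces is airtight, that is, that the norm of the full operator on $\X\times\X$ really is the supremum over the blocks $M_p$; this rests on the mutual orthogonality and completeness of the subspaces $\mathcal{P}_p$, which follow from $(v_p)_p$ being an orthonormal basis together with the diagonality of all the block entries. Once the block form and this direct-sum decomposition are in place, the singular-value computation is routine.
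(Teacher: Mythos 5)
Your proposal is correct and follows essentially the same route as the paper: both reduce the computation to a family of $2\times 2$ problems indexed by the eigenspaces of $T^*T$, and your Gram matrix $M_p^*M_p$ is exactly the matrix $A_{i,n,p}$ whose largest eigenvalue the paper computes via the same trace/determinant formula. The only difference is presentational — you phrase the reduction as an orthogonal direct-sum decomposition into invariant planes $\mathcal{P}_p$ with $\|{\cdot}\|=\sup_p\|M_p\|$, whereas the paper maximizes the quadratic form $\sum_p \omega_p^\top A_{i,n,p}\omega_p$ directly and justifies the supremum with an explicit $\epsilon$-argument.
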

\begin{proof}
Thanks to expressions \eqref{def:nn-virtual-bias}, \eqref{e:defWtin}, \eqref{def:WjWi}, and \eqref{def:N-bias},
%~\eqref{def:vp1}, \eqref{def:N-bias} and~\eqref{def:vp-bias}, 
we can calculate the norm of $ U_n\circ \cdots \circ U_i$.
For every $z =(x,b)$, $U_n\circ \cdots \circ U_i z = (W_{i,n} x + \widetilde{W}_{i,n} b, \eta_{i,n} b)$ and
\[
\begin{split}
\|U_n\circ \cdots \circ U_i z \|^2 = & \; \| W_{i,n} x + \widetilde{W}_{i,n} b\|^2 + \eta_{i,n}^2 \|b\|^2\\
= & 
\; \| W_{i,n}x\|^2 + 2\langle W_{i,n} x , \widetilde{W}_{i,n} b \rangle 
+ \| \widetilde{W}_{i,n} b \|^2 + \eta_{i,n}^2 \|b\|^2
\; .
\end{split}
\]
%where 
%\begin{equation}
%\widetilde{W}_{i,n} =  
%\sum_{j=i}^{n} \lambda_{j} \eta_{i,j-1}W_{j+1,n}\, .
%\end{equation}
Let $(\beta_{i,n,p},v_p)_{p\in\mathbb{N}}$ 
defined by~\eqref{def:vp1}
and $(\widetilde{\beta}_{i,n,p},v_p)_{p\in\mathbb{N}}$
defined by~\eqref{def:vp-bias}
be the respective eigensystems of~$W_{i,n}$ and~$\widetilde{W}_{i,n}$.
Let us decompose $(x,b)$ on the basis of eigenvectors $(v_{p})_{p}$ of $T^*T$, % defined by~\eqref{def:T},
as 
\begin{equation}\label{e:decxbbasis}
\begin{cases}
\displaystyle x = \sum_p \xp \, v_p \; , \\
\displaystyle b = \sum_p \bp \, v_p \; .
\end{cases} 
\end{equation}
In the following, we will assume that $\X$ is a real Hilbert space. A similar proof can be followed for complex Hilbert spaces.
We have then
\[
\|U_n\circ \cdots \circ U_i z\|^2 = \; \sum_p \beta_{i,n,p}^2 \xp^2 + 2 \sum_p \beta_{i,n,p} \widetilde{\beta}_{i,n,p} \xp \bp
+ \sum_p (\widetilde{\beta}_{i,n,p}^2 + \eta_{i,n}^2) \bp^2
\; .
\]
By definition of the operator norm,
\[
 \| U_n\circ \cdots \circ U_i\|^2 = \; \underset{\|x\|^2 +\|b\|^2=1}{\sup}
\left(
\sum_p \beta_{i,n,p}^2 \, \xp^2 
+ (\eta_{i,n}^2+\widetilde{\beta}_{i,n,p}^2) \, \bp^2 
+ 2\beta_{i,n,p} \widetilde{\beta}_{i,n,p} \, \xp  \,\bp \right)\; .
\]
Note that, for every integer $p$ and $\zp = (\xp, \bp)\in \mathbb{R}^2$,
\begin{equation}
\beta_{i,n,p}^2 \, \xp^2 +
(\eta_{i,n}^2+\widetilde{\beta}_{i,n,p}^2) \, \bp^2 
+ 2\beta_{i,n,p} \widetilde{\beta}_{i,n,p} \, \xp \, \bp 
= \zp^\top
A_{i,n,p}\, \zp
\end{equation}
where 
%$\langle \cdot, \cdot \rangle$ denotes the Euclidean inner product and 
\[A_{i,n,p} = \left( 
\begin{array}{cc}
  \beta_{i,n,p}^2   & \textcolor{black}{\beta_{i,n,p} \widetilde{\beta}_{i,n,p}} \\
  \widetilde{\beta}_{i,n,p}\beta_{i,n,p}     & \eta_{i,n}^2 + \widetilde{\beta}_{i,n,p}^2
\end{array}
\right) \; .
\]
Hence,
\begin{equation} \label{e:defnormU2-bias-before}
 \| U_n\circ \cdots \circ U_i \|^2 = \; \underset{z = (\zp)_p, \|z\| =1}{\sup}
             \sum_p \zp^\top
A_{i,n,p}\, \zp 
\; .
\end{equation}
\textcolor{black}{
For every index $p$, let $\nu_{i,n,p}$ be  the maximum eigenvalue of the symmetric positive semidefinite matrix $A_{i,n,p}$.
We have
\begin{equation*}
 \sum_p \zp^\top
A_{i,n,p}\, \zp 
\le \sum_p \nu_{i,n,p}
\| \omega_p\|^2
\le \left(\sup_p\, \nu_{i,n,p}\right)
\sum_{p} \|\omega_p\|^2
= \left(\sup_p\, \nu_{i,n,p}\right)
\|z\|^2\;.
\end{equation*}
Altogether with \eqref{e:defnormU2-bias-before},
this shows that
\begin{equation*}
\| U_n\circ \cdots \circ U_i \|^2 
\le \sup_p\, \nu_{i,n,p}\;.
\end{equation*}
In addition, from the definition of 
the supremum, for every $\epsilon >0$,
there exists $p^*$ such that
\[
 \sup_p\, \nu_{i,n,p}-\epsilon < \nu_{i,n,p^*}.
\]
By setting $z^* = (\omega^*_p)_p$ where
$\omega^*_p = 0$ if
$p\neq p^*$, and
$\omega^*_{p^*}$ is a unit norm eigenvector 
associated with eigenvalue $\nu_{i,n,p^*}$,
\begin{equation*}
 \sum_p \zp^\top
A_{i,n,p}\, \zp
= \omega_{p^*}^\top
A_{i,n,p^*}\, \omega_{p^*}
= \nu_{i,n,p^*} 
>  \sup_p\, \nu_{i,n,p}-\epsilon\;.
\end{equation*}
Since $\epsilon$ can be chosen arbitrarily small, we deduce that
%Since $A_{i,n,p}$ is a symmetric positive semidefinite matrix, 
\begin{equation}\label{e:defnormU2-bias}
\| U_n\circ \cdots \circ U_i \|^2 = 
%\underset{p, \|\zp\| =1}{\sup} \zp^\top
%A_{i,n,p}\, \zp = 
\sup_p\, \nu_{i,n,p}\; .
\end{equation}
%where, for every index $p$, $\nu_{i,n,p}$ is  the maximum eigenvalue of $A_{i,n,p}$.
We finally note that
the two eigenvalues of matrix $A_{i,n,p}$ are the roots of the characteristic polynomial 
}
\[
\begin{split}
(\forall \nu \in \mathbb{R})\quad
    \text{det} (A_{i,n,p} - \nu \1_2) = & \;
(\beta_{i,n,p}^2 - \nu)(\widetilde{\beta}_{i,n,p}^2 + \eta_{i,n}^2 - \nu) - \beta_{i,n,p}^2\widetilde{\beta}_{i,n,p}^2 \\
                           = & \;
\nu^2 - (\beta_{i,n,p}^2 + \widetilde{\beta}_{i,n,p}^2 + \eta_{i,n}^2 )\nu 
+ \beta_{i,n,p}^2\eta_{i,n}^2
\; .
\end{split}
\]
The discriminant of this second-order polynomial reads
\[
\begin{split}
\Delta_{i,n,p} = & \;  (\beta_{i,n,p}^2 + \widetilde{\beta}_{i,n,p}^2 + \eta_{i,n}^2 )^2 - 4\beta_{i,n,p}^2 \eta_{i,n}^2\\
        =& \;  (\beta_{i,n,p}^2 -\widetilde{\beta}_{i,n,p}^2 - \eta_{i,n}^2)^2
       +4 \beta_{i,n,p}^2\widetilde{\beta}_{i,n,p}^2 \; \ge 0
        \; .
\end{split}
\]
Therefore, for every integer $p$,
\begin{equation}\label{e:defnuinp-bias}
\nu_{i,n,p} = \frac{1}{2} 
\left(
\beta_{i,n,p}^2 + \widetilde{\beta}_{i,n,p}^2 + \eta_{i,n}^2 + \sqrt{(\beta_{i,n,p}^2  + \widetilde{\beta}_{i,n,p}^2  + \eta_{i,n}^2 )^2
        - 4\beta_{i,n,p}^2 \eta_{i,n}^2}
\right) \; .
\end{equation}
%%By going back to \eqref{e:defnormU2-bias},
%we have
%\[
%\| U_n\circ \cdots \circ U_i  \|^2
%= 
%\sup_{
%\substack{(\omega_p)_p\\
%\sum_p \|\omega_p\|^2= 1}}
%\|\omega_p\|^2 \nu_{i,n,p}
%\le a_{i,n}.
%\]
%In addition, from the definition of $a_{i,n}$, for every $\epsilon >0$,
%there exists $p^*$ such that
%\[
%a_{i,n}-\epsilon < \nu_{i,n,p^*},
%\]
%which shows that
%\[
%a_{i,n}-\epsilon < 
%\| U_n\circ \cdots \circ U_i  \|^2
%\le a_{i,n}.
%\]
%Going back to ,
%We deduce that
\textcolor{black}{
It then follows from \eqref{e:defnormU2-bias} that
}
\[
\| U_n\circ \cdots \circ U_i  \|^2 
= a_{i,n} \; .
\]
\end{proof}

\begin{remarque}\label{re:weightednorm}
The previous result assumes that the product space $\X\times \X$ is equipped with the standard norm. Since we may be interested in the stability w.r.t. variations of the observed data $y$, and $b_0 = T^* y$, it might be more insightful to consider instead the following weighted norm:
\begin{equation}
\label{e:normweight}
(\forall z = (x,b)\in \X\times \X)\quad
\|z\| = \sqrt{\|x\|^2+\|b\|_{\varpi}^2}\,,
\end{equation}
where $b$ is decomposed as in \eqref{e:decxbbasis}, 
\begin{equation}
\|b\|_{\varpi}^2 = \sum_p \frac{|\zeta_p|^2}{\varpi_p}\, ,
\end{equation}
and \textcolor{black}{$\varpi =(\varpi_p)_p$} is such that $\inf_p \varpi_p > 0$
and $\sup_p \varpi_p < +\infty$. The latter weights aim to compensate the effect of $T^*$ on the observed data.
This space renormalization is possible since
the operator $Q_n$ remains firmly nonexpansive after this norm change, because of its specific structure.\\ 
\textcolor{black}{As shown in Appendix \ref{secA0},}
the expression of $a_{i,n}$
for $n\in \{1,\ldots,m\}$
and $i\in \{1,\ldots,n\}$
in Lemma~\ref{e:normUni-bias} is then modified as follows:
\begin{equation}\label{e:defain-bias-weighted}
a_{i,n} = \frac{1}{2} \sup_{p}
\left(
\beta_{i,n,p}^2 + 
\varpi_p
\widetilde{\beta}_{i,n,p}^2 + \eta_{i,n}^2+ \sqrt{(\beta_{i,n,p}^2  + \varpi_p\widetilde{\beta}_{i,n,p}^2  +  \eta_{i,n}^2 )^2
        - 4\beta_{i,n,p}^2 \eta_{i,n}^2}
\right) \; .
\end{equation}
\end{remarque}

We will now quantify the Lipschitz regularity of the network.
\begin{proposition}\label{p:LipVNN-bias}
Let $m \in \mathbb{N}\setminus \{0\}$. 
For every $n\in \{1,\ldots,m\}$ and 
$i\in \{1,\ldots,n\}$,  let $a_{i,n}$ be given by
\eqref{e:defain-bias}.
Set $\theta_0 = 1$ and
define $(\theta_n)_{1\le n \le m}$ recursively by
\[
(\forall n \in \{1,\ldots,m\})\quad 
\theta_n = 
\sum_{i=1}^n \theta_{i-1} \sqrt{a_{i,n}}\;.
\]
Then 
$\theta_m/2^{m-1}$ is a Lipschitz constant of the virtual network~\eqref{def:nn-virtual-bias}.
In addition,
\begin{equation}\label{e:looseLipb}
\sqrt{a_{1,m}}\le \frac{\theta_m}{2^{m-1}}\le  \Big(\prod_{n=1}^m a_{n,n}\Big)^{1/2}.
\end{equation}
\end{proposition}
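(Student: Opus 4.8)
The plan is to derive both claims directly from Proposition~\ref{prop:oldres} and Lemma~\ref{e:normUni-bias}, the only genuinely nontrivial work being a combinatorial reindexing of the sum defining $\theta_m$ in~\eqref{e:defthetaell}. Applying part~\ref{prop:oldresi} of Proposition~\ref{prop:oldres} to the virtual network~\eqref{def:nn-virtual-bias}, whose layers are compositions of the firmly nonexpansive operators $Q_n$ with the linear weight operators $U_n$, shows that $\theta_m/2^{m-1}$ is a Lipschitz constant of the network, with $\theta_m$ given by the combinatorial expression~\eqref{e:defthetaell}. By Lemma~\ref{e:normUni-bias}, every factor occurring there has the closed form $\|U_n\circ\cdots\circ U_i\| = \sqrt{a_{i,n}}$. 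Hence the first step is to identify~\eqref{e:defthetaell} with the sequence defined recursively in the statement.

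For this, I would observe that the double sum in~\eqref{e:defthetaell} runs over all subsets $\{j_1<\cdots<j_k\}$ of $\{1,\ldots,m-1\}$, and that each such subset encodes a splitting of $\{1,\ldots,m\}$ into the consecutive blocks $[1,j_1],[j_1+1,j_2],\ldots,[j_k+1,m]$, the empty subset corresponding to the single block $[1,m]$ and yielding the term $\|U\| = \sqrt{a_{1,m}}$. Thus $\theta_m$ equals the sum, over all compositions of $\{1,\ldots,m\}$ into consecutive blocks, of the products of the factors $\sqrt{a_{i,n}}$ attached to each block $[i,n]$. Grouping these compositions according to their last block $[i,n]$ (with $i$ ranging in $\{1,\ldots,n\}$) and noting that the remaining blocks form an arbitrary composition of $\{1,\ldots,i-1\}$ yields exactly the recursion $\theta_n = \sum_{i=1}^n \theta_{i-1}\sqrt{a_{i,n}}$, the convention $\theta_0=1$ accounting for the empty composition. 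A short induction on $n$ then confirms that the recursively defined sequence coincides with~\eqref{e:defthetaell}.

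The two bounds in~\eqref{e:looseLipb} both follow from this block decomposition together with submultiplicativity of the operator norm, once one observes that there are exactly $2^{m-1}$ compositions of $\{1,\ldots,m\}$ into consecutive blocks (a binary choice at each of the $m-1$ gaps), matching $\sum_{k=0}^{m-1}\binom{m-1}{k}=2^{m-1}$. For the lower bound, each composition splits $U = U_m\circ\cdots\circ U_1$ into a product of block operators, so submultiplicativity gives that the product of the corresponding factors $\sqrt{a_{i,n}} = \|U_n\circ\cdots\circ U_i\|$ is at least $\|U\| = \sqrt{a_{1,m}}$; summing this inequality over the $2^{m-1}$ compositions yields $\theta_m \ge 2^{m-1}\sqrt{a_{1,m}}$. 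For the upper bound, submultiplicativity applied within each block gives $\sqrt{a_{i,n}} \le \prod_{j=i}^n \sqrt{a_{j,j}}$, so the product of factors over any composition is bounded by $\prod_{n=1}^m \sqrt{a_{n,n}}$; summing over the $2^{m-1}$ compositions gives $\theta_m \le 2^{m-1}\big(\prod_{n=1}^m a_{n,n}\big)^{1/2}$, which is~\eqref{e:looseLipb}.

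I expect the main obstacle to be purely bookkeeping: making the correspondence between subsets of $\{1,\ldots,m-1\}$ in~\eqref{e:defthetaell} and the ``last block'' recursion watertight, and handling the boundary conventions carefully (the value $\theta_0=1$, the empty subset versus the single block, and the limit cases recorded in~\eqref{e:limitbetann}). The analytic content is light, since Lemma~\ref{e:normUni-bias} already supplies the closed form of every block norm and Proposition~\ref{prop:oldres} furnishes the Lipschitz estimate, while submultiplicativity and the count $2^{m-1}$ do the rest.
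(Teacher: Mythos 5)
Your proof is correct and follows essentially the same route as the paper: both reduce the statement to Proposition~\ref{prop:oldres}\ref{prop:oldresi} for the Lipschitz constant and to Lemma~\ref{e:normUni-bias} for the closed forms $\|U_n\circ\cdots\circ U_i\|=\sqrt{a_{i,n}}$. The only difference is that where the paper simply cites \cite[Lemma~3.3]{Combettes2019} for the recursive computation of $\theta_m$ and \cite[Proposition~4.3(i)]{Combettes2020} for the sandwich bounds \eqref{e:looseLipb}, you prove both facts directly---the recursion via the ``last block'' grouping of the $2^{m-1}$ block compositions encoded in \eqref{e:defthetaell}, and the two bounds via submultiplicativity applied composition by composition---and these self-contained arguments are sound.
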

\begin{proof}
According to Proposition~\ref{prop:oldres}\ref{prop:oldresi}, 
if $\theta_m$ is given by \eqref{e:defthetaell}, then $\theta_m/2^{m-1}$ is a Lipschitz constant of the virtual network~\eqref{def:nn-virtual-bias}. 
On the other hand, it follows from
\cite[Lemma 3.3]{Combettes2019} that $\theta_m$ can be calculated recursively as
\[
(\forall n \in \{1,\ldots,m\})\quad 
\theta_n = 
\sum_{i=1}^n \theta_{i-1}  \| U_n \circ \cdots \circ U_i \|\;,
\]
with  $\theta_0 = 1$. Finally, Lemma \ref{e:normUni-bias} allows us to substitute $(\sqrt{a_{i,n}})_{1\le i \le n}$ for $(\| U_n \circ \cdots \circ U_i \|)_{1\le i \le n}$ in the above expression.\\
In addition, according to \cite[Proposition 4.3(i)]{Combettes2020},
\[
\|U_m \circ \cdots \circ U_1\|\le \frac{\theta_m}{2^{m-1}} \le
\prod_{n=1}^m \|U_n\|.
\]
It follows from Lemma \ref{e:normUni-bias},
that $\|U_m \circ \cdots \circ U_1\| = \sqrt{a_{1,m}}$ and
\[
(\forall n \in \{1,\ldots,m\})\quad
\|U_n\| = \sqrt{a_{n,n}},
\]
which allows us to deduce inequality \eqref{e:looseLipb}.
\end{proof}

\begin{remarque}
    Proposition \ref{p:LipVNN-bias}
    shows that the complexity for computing the proposed Lipschitz bound is quadratic as a function 
    of the number of layers $m$
    (more precisely, $O(m(m+1)/2))$.
\end{remarque}

We can relate the bounds provided on the Lipschitz constant in the previous proposition to simpler ones.
\begin{corollaire}
Assume that $\X \times \X$ is equipped with the norm defined by \eqref{e:normweight}.
Then
\begin{multline}\label{e:looserLipb}
\sup_p
\max\left\{\Big|\prod_{n=1}^m \beta_p^{(n)}\Big|,
\sqrt{\varpi_p\widetilde{\beta}_{1,m,p}^2+\eta_{1,m}^2}\right\}
\le  \frac{\theta_m}{2^{m-1}} \\ \le 
\prod_{n=1}^m 
\left(\sup_p\,
\left( (\beta_p^{(n)})^2 
+ \varpi_p \Big(\frac{\lambda_n}{1+\lambda_n\chi_n}\Big)^2
+\eta_n^2\right)\right)^{1/2}.
\end{multline}
\end{corollaire}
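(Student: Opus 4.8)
The plan is to obtain both inequalities directly from the two-sided estimate \eqref{e:looseLipb} of Proposition~\ref{p:LipVNN-bias}, reading off the relevant quantities from the $2\times 2$ matrices that govern $a_{i,n}$ through its weighted form \eqref{e:defain-bias-weighted} (as recorded in Remark~\ref{re:weightednorm}, since the norm here is \eqref{e:normweight}). The starting observation is that $a_{i,n} = \sup_p \nu_{i,n,p}$, where $\nu_{i,n,p}$ is the largest eigenvalue of a symmetric positive semidefinite matrix $A_{i,n,p}$ with diagonal entries $\beta_{i,n,p}^2$ and $\varpi_p\widetilde{\beta}_{i,n,p}^2+\eta_{i,n}^2$, trace $\beta_{i,n,p}^2+\varpi_p\widetilde{\beta}_{i,n,p}^2+\eta_{i,n}^2$, and determinant $\beta_{i,n,p}^2\eta_{i,n}^2$. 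Each side of the corollary then reduces to comparing $\nu_{i,n,p}$ with, respectively, the diagonal entries and the trace of this matrix, for a suitable choice of $(i,n)$.

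For the lower bound I would use the elementary fact that each diagonal entry of a symmetric matrix is at most its largest eigenvalue (since $e_j^\top A_{i,n,p}\, e_j \le \nu_{i,n,p}$). Applying this with $i=1$ and $n=m$ gives, for every $p$, both $\nu_{1,m,p}\ge \beta_{1,m,p}^2$ and $\nu_{1,m,p}\ge \varpi_p\widetilde{\beta}_{1,m,p}^2+\eta_{1,m}^2$. Since $\beta_{1,m,p}=\prod_{n=1}^m \beta_p^{(n)}$ by \eqref{def:vp1}, taking square roots and then the supremum over $p$ yields $\sqrt{a_{1,m}} = \sup_p \sqrt{\nu_{1,m,p}} \ge \sup_p \max\{|\prod_{n=1}^m \beta_p^{(n)}|,\sqrt{\varpi_p\widetilde{\beta}_{1,m,p}^2+\eta_{1,m}^2}\}$, where I invoke the identity $\max\{\sup_p A_p,\sup_p B_p\}=\sup_p\max\{A_p,B_p\}$. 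Combining with the left inequality $\sqrt{a_{1,m}}\le \theta_m/2^{m-1}$ of \eqref{e:looseLipb} closes the lower bound.

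For the upper bound I would use the dual fact that the largest eigenvalue of a positive semidefinite $2\times 2$ matrix is bounded above by its trace, the two eigenvalues being nonnegative. Taking $i=n$ and using the limit values \eqref{e:limitbetann}, namely $\beta_{n,n,p}=\beta_p^{(n)}$, $\widetilde{\beta}_{n,n,p}=\lambda_n/(1+\lambda_n\chi_n)$ and $\eta_{n,n}=\eta_n$, this gives $\nu_{n,n,p}\le (\beta_p^{(n)})^2+\varpi_p(\lambda_n/(1+\lambda_n\chi_n))^2+\eta_n^2$ for every $p$, hence $a_{n,n}=\sup_p \nu_{n,n,p}\le \sup_p\big((\beta_p^{(n)})^2+\varpi_p(\lambda_n/(1+\lambda_n\chi_n))^2+\eta_n^2\big)$. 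Multiplying over $n$ and taking a square root, the right inequality $\theta_m/2^{m-1}\le (\prod_{n=1}^m a_{n,n})^{1/2}$ of \eqref{e:looseLipb} produces the claimed upper bound.

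The calculation is essentially routine once these two linear-algebra facts (diagonal entries $\le$ largest eigenvalue, and largest eigenvalue $\le$ trace) are identified as the right tools. The only point that demands care is the bookkeeping with the weighted norm: one must consistently use the weighted matrix entries coming from \eqref{e:defain-bias-weighted} rather than the unweighted ones of Lemma~\ref{e:normUni-bias}, and correctly commute the maximum with the supremum over $p$ in the lower bound. Beyond this I anticipate no genuine obstacle.
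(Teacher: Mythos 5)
Your proposal is correct and follows essentially the same route as the paper: both sides are reduced to the two-sided estimate \eqref{e:looseLipb}, with the lower bound obtained by minorizing the top eigenvalue of $A'_{1,m,p}$ by its diagonal entries and the upper bound by majorizing the top eigenvalue of $A'_{n,n,p}$ by its trace. The paper carries this out by direct manipulation of the explicit root formula (rewriting the discriminant and using $\tfrac12(A+B+|A-B|)=\max\{A,B\}$, and bounding $\sqrt{\Delta}$ by the trace), whereas you invoke the equivalent linear-algebra facts directly; this is a cosmetic difference only.
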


\begin{proof}
\textcolor{black}{
For every $n\in \{1,\ldots,m\}$ and integer $p$,
\[
\sqrt{(\beta_{n,n,p}^2  + \varpi_p\widetilde{\beta}_{n,n,p}^2  +  \eta_{n,n}^2 )^2
        - 4\beta_{n,n,p}^2 \eta_{n,n}^2}
\le \beta_{n,n,p}^2  + \varpi_p\widetilde{\beta}_{n,n,p}^2  +  \eta_{n,n}^2\;.
\]
We deduce from \eqref{e:defain-bias-weighted} that}
\begin{align*}
    a_{n,n} &\le \sup_p \beta_{n,n,p}^2 + 
\varpi_p
\widetilde{\beta}_{n,n,p}^2 + \eta_{n,n}^2\\
& = \sup_p\,(\beta_p^{(n)})^2 + 
\varpi_p
\Big(\frac{\lambda_n}{1+\lambda_n\chi_n}\Big)^2 + \eta_n^2\ ,
\end{align*}
\textcolor{black}{
where the last equality follows
from \eqref{def:N-bias}
and \eqref{e:limitbetann}.
}
The majorization in \eqref{e:looserLipb} is then a consequence of the upper bound in 
\eqref{e:looseLipb}.\\
On the other hand, \textcolor{black}{by rewriting \eqref{e:defain-bias-weighted} in a slightly different way,}
\begin{align*}\label{e:defain-bias-weighted}
a_{1,m} = &\frac{1}{2} \sup_{p}
\left(
\beta_{1,n,p}^2 + 
\varpi_p
\widetilde{\beta}_{1,m,p}^2 + \eta_{1,m}^2\right.\\
&\qquad\quad\left.+ \sqrt{(\beta_{1,m,p}^2  - \varpi_p\widetilde{\beta}_{1,m,p}^2 -  \eta_{1,m}^2 )^2
        + 4\varpi_p\beta_{1,m,p}^2 \widetilde{\beta}_{1,m,p}^2}
\right) \; ,\\
\ge & \frac{1}{2} \sup_{p}
\left(
\beta_{1,n,p}^2 + 
\varpi_p
\widetilde{\beta}_{1,m,p}^2 +\eta_{1,m}^2+
|\beta_{1,m,p}^2  - \varpi_p\widetilde{\beta}_{1,m,p}^2 -  \eta_{1,m}^2|\right)\\
= & \sup_p
\max\Big\{
\beta_{1,n,p}^2,
\varpi_p\widetilde{\beta}_{1,m,p}^2+\eta_{1,m}^2\}.
\end{align*}
\end{proof}

\begin{remarque}
As shown in {\rm \cite{Corbineau2020}}, if we are just interested in the sensitivity w.r.t. $x_0$ of the unfolded algorithm by assuming that $b_0$ is unperturbed, formulas in 
Proposition~\ref{p:LipVNN-bias} remain valid 
by setting  
\[
(\forall n \in \{1,\ldots,m\})
(\forall i \in \{1,\ldots,n\})\quad
a_{i,n} = \sup_p |\beta_{i,n,p}|.
\]
This can be viewed as a limit case of \eqref{e:defain-bias-weighted} where 
$\sup_p \varpi_p \to 0$ and, for every $n\in \{1,\ldots,m\}$, $\eta_n = 0$.
\end{remarque}

We will next provide conditions ensuring that the virtual network is an averaged operator.
\begin{proposition}
\label{prop:ab-bias}
Assume that $\X \times \X$ is equipped with the norm defined by \eqref{e:normweight}.
Let $m \in \mathbb{N}\setminus \{0\}$.
Let $a_{1,m}$ be defined by \eqref{e:defain-bias-weighted} and $\theta_m$ be defined in Proposition \ref{p:LipVNN-bias}.
Let $\alpha \in [1/2,1]$.
Define
\begin{align}
b_\alpha =  \frac{1}{2} \sup_p &\;
\left(
 (\beta_{1,m,p}- \gamma_\alpha)^2  +  (\eta_{1,m}- \gamma_\alpha)^2 +\varpi_p\widetilde{\beta}_{1,m,p}^2 \right. \nonumber\\
& \; \left. + \; \sqrt{
\begin{array}{c}
\big( (\beta_{1,m,p}- \gamma_\alpha)^2  +  (\eta_{1,m}-\gamma_\alpha)^2 + \varpi_p\widetilde{\beta}_{1,m,p}^2 \big)^2 \\
 - 4  (\beta_{1,m,p}- \gamma_\alpha)^2(\eta_{1,m}- \gamma_\alpha)^2
\end{array}
}
\; \right) \; ,
\end{align} 
with $\gamma_\alpha = 2^m(1-\alpha)$.
Then virtual network~\eqref{def:nn-virtual-bias} is $\alpha$-averaged if
\begin{equation}\label{e:condalphanew-bias}
\sqrt{b_\alpha} -\sqrt{a_{1,m}} \leq 2^m \alpha - 2\theta_m\;.
\end{equation}
\end{proposition}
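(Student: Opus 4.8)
The plan is to deduce the averagedness of the virtual network $S=Q_m\circ U_m\circ\cdots\circ Q_1\circ U_1$ directly from the general criterion in Proposition~\ref{prop:oldres}\ref{prop:oldresii}. Every space $\mathcal{H}_n$ here equals $\X\times\X$, so the requirement $\mathcal{H}_m=\mathcal{H}_0$ holds; each $U_n$ is bounded and linear; and, as recorded in Remark~\ref{re:weightednorm}, each $Q_n$ stays firmly nonexpansive once $\X\times\X$ carries the weighted norm~\eqref{e:normweight}. Hence, writing $U=U_m\circ\cdots\circ U_1$ and $\gamma_\alpha=2^m(1-\alpha)$, condition~\eqref{e:condalphaold} becomes
\[
\|U-\gamma_\alpha\1\|-\|U\|+2\theta_m\le 2^m\alpha ,
\]
and the whole task reduces to evaluating the two weighted operator norms. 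The first is immediate: Lemma~\ref{e:normUni-bias} in its weighted form~\eqref{e:defain-bias-weighted}, taken at $i=1$ and $n=m$, gives $\|U\|=\sqrt{a_{1,m}}$.

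The key, and main, step is to compute $\|U-\gamma_\alpha\1\|$, and the idea is that subtracting $\gamma_\alpha\1$ leaves intact the block-triangular structure of~\eqref{def:U-bias} on which the proof of Lemma~\ref{e:normUni-bias} rests. Indeed,
\[
U-\gamma_\alpha\1=
\left(\begin{array}{cc}
W_{1,m}-\gamma_\alpha\1 & \widetilde{W}_{1,m}\\
0 & (\eta_{1,m}-\gamma_\alpha)\1
\end{array}\right),
\]
whose upper-left block is still diagonal in the orthonormal basis $(v_p)_p$, now with eigenvalues $\beta_{1,m,p}-\gamma_\alpha$; whose off-diagonal block $\widetilde{W}_{1,m}$ is unchanged, with eigenvalues $\widetilde{\beta}_{1,m,p}$; and whose lower-right scalar block is $(\eta_{1,m}-\gamma_\alpha)\1$. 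I would therefore replay verbatim the diagonalization argument of Lemma~\ref{e:normUni-bias}, in the weighted-norm version carried out in Appendix~\ref{secA0}: the squared norm equals the supremum over $p$ of the largest eigenvalue of the associated symmetric positive semidefinite $2\times2$ matrix. Compared with $a_{1,m}$ the only changes are the substitutions $\beta_{1,m,p}\mapsto\beta_{1,m,p}-\gamma_\alpha$ and $\eta_{1,m}\mapsto\eta_{1,m}-\gamma_\alpha$, while $\varpi_p$ and $\widetilde{\beta}_{1,m,p}$ are untouched. Reading off the result gives exactly $\|U-\gamma_\alpha\1\|^2=b_\alpha$.

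The remaining work is purely algebraic: substituting $\|U\|=\sqrt{a_{1,m}}$ and $\|U-\gamma_\alpha\1\|=\sqrt{b_\alpha}$ into the displayed inequality and rearranging yields $\sqrt{b_\alpha}-\sqrt{a_{1,m}}\le 2^m\alpha-2\theta_m$, which is precisely~\eqref{e:condalphanew-bias}. The one point deserving care is the legitimacy of selecting the larger root after the shift: the shifted $2\times2$ matrix is the restriction of $(U-\gamma_\alpha\1)^*(U-\gamma_\alpha\1)$ to each eigenplane, hence symmetric positive semidefinite, so its discriminant is again a nonnegative sum of squares and both roots are real and nonnegative. The supremum-of-largest-eigenvalue formula therefore applies unchanged and produces $b_\alpha$ with the square-root term exactly as stated.
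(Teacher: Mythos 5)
Your proof is correct and follows essentially the same route as the paper: it verifies the hypotheses of Proposition~\ref{prop:oldres}\ref{prop:oldresii} (firm nonexpansiveness of the $Q_n$ under the weighted norm), identifies $\|U\|=\sqrt{a_{1,m}}$ via Lemma~\ref{e:normUni-bias}, and computes $\|U-\gamma_\alpha\1\|^2=b_\alpha$ by replaying the block-triangular spectral argument with the shifted diagonal entries. The only cosmetic difference is that you justify the nonnegativity of the discriminant of the shifted matrix explicitly, which the paper leaves implicit.
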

\begin{proof}
Let us calculate
the operator norms of $ U $ and 
$ U - \gamma_{\alpha} \1 $,
where $U$ is given by \eqref{def:U-bias}.

\paragraph{Norm of $U$. }
Applying Lemma~\ref{e:normUni-bias} when $i=1$ and $n=m$ yields
\[
\| U\|^2 = a_{1,m} 
\; .
\]

\paragraph{Norm of $U - \gamma_{\alpha} \1$. }
We follow the same reasoning as in the proof of Lemma~\ref{e:normUni-bias} \textcolor{black}{(see also Appendix \ref{secA0})}.
We have
\[
\begin{split}
\| U -\gamma_{\alpha} \1 \|^2 
                        =  \underset{z=(\zp)_p, \|z\| =1}{\sup} 
\sum_p \zp^\top B_p\, \zp
\; ,
\end{split}
\]
where $B_p$ is the symmetric positive semidefinite matrix given by
\[B_p = \left(
\begin{array}{cc}
 (\beta_{1,m,p}-\gamma_{\alpha})^2    & \sqrt{\varpi_p}(\beta_{1,m,p} -\gamma_{\alpha})\widetilde{\beta}_{1,m,p}  \\
  \sqrt{\varpi_p}(\beta_{1,m,p} -\gamma_{\alpha})\widetilde{\beta}_{1,m,p}     &  (\eta_{1,m}-\gamma_{\alpha})^2 +\varpi_p\widetilde{\beta}_{1,m,p}^2
\end{array}
\right) \; .
\]
By definition of the spectral norm, 
\begin{equation}\label{e:norm2Ualpha-bias}
\| U -2^m(1-\alpha) \1 \|^2  = \sup_p\, \nu_{p}\; ,
\end{equation}
where, for every integer $p$, $\nu_{p}$ is  the maximum eigenvalue of $B_{p}$.
The two eigenvalues of this matrix are the roots of the polynomial 
\begin{align}
(\forall \nu \in \mathbb{R})\quad
    \operatorname{det} (B_{p} - \nu \1_2) 
                           = & \;
 \nu^2 - ( (\beta_{1,m,p}-\gamma_\alpha)^2  +  (\eta_{1,m}-\gamma_\alpha)^2 +\varpi_p\widetilde{\beta}_{1,m,p}^2 ) \ \nu 
\nonumber\\&+(\beta_{1,m,p}-\gamma_\alpha)^2 (\eta_{1,m}-\gamma_\alpha)^2
\; .
\end{align}
Solving the corresponding second-order equation leads to
\begin{multline}
\!\!\!\!\nu_p =  
%&
\frac{1}{2} 
\bigg(
 (\beta_{1,m,p}-\gamma_\alpha)^2  +  (\eta_{1,m}-\gamma_\alpha)^2 +\varpi_p\widetilde{\beta}_{1,m,p}^2 \nonumber\\
%& 
\; + \sqrt{\big( (\beta_{1,m,p}-\gamma_\alpha)^2  +  (\eta_{1,m}-\gamma_\alpha)^2 + \varpi_p\widetilde{\beta}_{1,m,p}^2 \big)^2 - 4  (\beta_{1,m,p}-\gamma_\alpha)^2(\eta_{1,m}-\gamma_\alpha)^2}
\bigg) \; .
\end{multline}
Then, it follows from~\eqref{e:norm2Ualpha-bias} that $\|U-\gamma_{\alpha}\1\|^2 = b_\alpha$.
\paragraph{Conclusion of the proof.} 
Based on the previous calculations, Condition \eqref{e:condalphanew-bias} is equivalent to \eqref{e:condalphaold}.
In addition, note that, \textcolor{black}{as shown in Appendix \ref{secA0}, for every $n\in \{1,\ldots,m\}$, $Q_n$ 
in \eqref{def:nn-virtual-bias} is firmly nonexpansive}. 
%since $R_n$ and $\1$ are.
By applying now Proposition~\ref{prop:oldres}\ref{prop:oldresii},
we deduce that, when Condition~\eqref{e:condalphanew-bias} holds, 
 virtual network~\eqref{def:nn-virtual-bias} is $\alpha$-averaged.
\end{proof}

\begin{remarque}
Condition \eqref{e:condalphanew-bias} just provides a sufficient condition for the averagedness of virtual network~\eqref{def:nn-virtual-bias}.
\end{remarque}
We conclude this subsection by a result emphasizing the interest of the leakage factors:
\begin{proposition}\label{prop:leakage}
Let $S$ be the virtual Model~\eqref{def:nn-virtual-bias} 
without leakage factor, i.e.,  for every $n \in \{ 1, \ldots, m \}$, $\eta_n =1$.
Assume that 
there exists $(x,b,b')\in \X^3$
such that \textcolor{black}{the output $x_m$ of the unfolded algorihm associated with 
$(x,b)$ is distinct from its output $x'_m$ associated with $(x,b')$.}
%such that $S(x,b) \neq S(x,b')$.
Then the Lipschitz constant of $S$ is greater than 1.
%Then, there does not exist $\alpha \in ]0,1[$ such that $S$ is $\alpha$-averaged. 
\end{proposition}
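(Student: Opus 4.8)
The plan is to exploit the special structure of $S$ in the absence of a leakage factor, namely the fact that the bias component is transmitted unchanged from input to output. Since $b_n = \eta_n\cdots\eta_1 b_0$ and all $\eta_n = 1$, we have $b_m = b_0$, so that for any input $(x_0,b_0)$ the output reads $S(x_0,b_0) = (x_m, b_0)$, where $x_m$ is the output of the unfolded algorithm. The idea is then to feed $S$ with two inputs that share the same first component but differ only in their bias, so that the input discrepancy is carried entirely by the second coordinate while the output discrepancy picks up an extra positive contribution from the first coordinate.

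Concretely, I would take $z = (x,b)$ and $z' = (x,b')$ with $(x,b,b')$ given by the hypothesis, so that the corresponding algorithm outputs satisfy $x_m \neq x'_m$. By the observation above, $Sz = (x_m, b)$ and $Sz' = (x'_m, b')$. Since the two inputs coincide on their first coordinate, the input distance reduces to $\|z - z'\|^2 = \|b - b'\|^2$, whereas the output distance is $\|Sz - Sz'\|^2 = \|x_m - x'_m\|^2 + \|b - b'\|^2$. Forming the ratio yields
\[
\frac{\|Sz - Sz'\|^2}{\|z - z'\|^2} = 1 + \frac{\|x_m - x'_m\|^2}{\|b - b'\|^2}.
\]

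To conclude that this ratio is strictly larger than $1$, I would first note that $b \neq b'$: otherwise the two runs of the algorithm would start from identical data $(x,b) = (x,b')$, forcing $x_m = x'_m$ and contradicting the hypothesis; hence $\|b - b'\| > 0$ and the denominator is nonzero. Since moreover $\|x_m - x'_m\| > 0$ by assumption, the displayed ratio exceeds $1$, and therefore so does the Lipschitz constant of $S$, which is the supremum of such ratios. I do not anticipate any genuine obstacle: the only point requiring a moment's care is the justification that the bias difference does not vanish, which is immediate from the hypothesis. The weighted-norm variant of Remark~\ref{re:weightednorm} is handled identically, the bias term $\|b-b'\|^2$ simply being replaced by $\|b-b'\|_\varpi^2 > 0$, which leaves the strict inequality intact.
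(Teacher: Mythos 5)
Your proposal is correct and follows essentially the same route as the paper: both exploit the identity $\|S(x,b)-S(x,b')\|^2=\|x_m-x'_m\|^2+\|b-b'\|_\varpi^2$ for inputs differing only in the bias, the paper phrasing it as a proof by contradiction (nonexpansiveness would force $x_m=x'_m$) while you state the same computation directly as a ratio strictly exceeding $1$. Your explicit remark that $b\neq b'$ follows from the hypothesis is a small but welcome point that the paper leaves implicit.
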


\begin{proof}
If $\eta_{1,m}=1$ then,
for every $n \in \{ 1, \ldots, m \}$,
\begin{equation}
    \label{def:nn-virtual}
z_n = \left( \begin{array}{c}
 x_n \\
 b_n
\end{array}
\right) ,
\quad
z_{n} = Q_n(U_n z_{n-1})
\; ,
\quad
\text{with}
\quad
\begin{cases}
\displaystyle Q_n = \left( \begin{array}{c}
  R_n   \\
  \1
\end{array} \right)
\; , \\
\\
\displaystyle U_n = \left( \begin{array}{cc}
   W_n  &  V_n \\
   0    & \1
\end{array}\right)
\; .
\end{cases}
\end{equation}
Suppose that the
virtual network $S$ is nonexpansive.
%$\alpha$-averaged
%with $\alpha \in ]0,1[$.
%This means that
%$R = (1-1/\alpha) \1 + S/\alpha $ is nonexpansive.
Thus, for every $(x,b)\in \X^2$ and $(x',b')\in \X^2$,  $ %\|R(x,b) - R(x',b')\|^2 
\|S(x,b)-S(x',b')\|
\leq \|x-x'\|^2 + \|b -b'\|_\varpi^2 $. 
\textcolor{black}{Let $x_m$ (resp. $x_m'$) denote the first
component in $\X$ of $S(x,b)$ (resp. $S(x',b')$).}
%Let $x_m = S(x,b)$ and $x'_m = S(x',b')$.
Since
\[
\begin{split}
%\|R(x,b) - R(x,b')\|^2 
\|S(x,b)-S(x,b')\|^2
  = & \; \Big\| 
  %(1-1/\alpha) \left( \begin{array}{c}
  %  x-x'  \\
  %    b-b'
%\end{array} \right)
 %+ 1/\alpha 
 \left( \begin{array}{c}
      \textcolor{black}{x_{m}-x'_{m}}  \\
      b-b'
\end{array} \right) \Big\|^2 \\
  = & \; \|
  %(1-1/\alpha) (x-x') + 1/\alpha (
  \textcolor{black}{x_{m}-x'_{m}}%) 
  \|^2 +\| b -b' \|_\varpi^2 \\
  \leq & \; \|x -x'\|^2 + \|b -b'\|_\varpi^2
  \; ,
\end{split}
\]
we deduce that
\[
 \|%(1-1/\alpha) (x-x') + 1/\alpha (
 \textcolor{black}{x_{m}-x'_{m}}%) 
 \|
 \leq 
 \|x -x'\|
 \; .
\]
Consequently, when $x = x'$, we have $x_m = x'_m$
whatever the choice of $b\neq b'$.
This contradicts our standing assumption.
%On the other, since $\lambda_m > 0$, if $R_m$ is not constant, then there exists $(b,b') \in \X^2$ such that
%$x_m = R_m(W_m x_{m-1}+\lambda_m b) \neq R_m(W_m x_{m-1}+\lambda_m b') = x'_m$.
\end{proof}

%Note that the assumption made 
%on $S$ is weak since it just means that the unfolded network is sensitive to the observed data. 
\textcolor{black}{Note that the assumption made on $S$ is relevant from the applicative viewpoint. It is typically satisfied if we have two different ground truth data 
$(\overline{x},\overline{x}')\in \X^2$ and we observe
$y = T \overline{x}+w$
and $y' = T\overline{x}'+w$
with $b=T^*y \neq T^*y'=b'$,
while the same initialization $x$ is used in the iterative
recovery process. Then, the condition $x_m\neq x'_m$ just means
that the unfolded network is sensitive to the observed data, as can be expected from a model
trained to perform suitable restoration.
}\\
The above result 
%extends to the limit case when $\alpha = 1$ showing 
shows
that the virtual network without leakage factors cannot be %nonexpansive.
averaged since any averaged operator is nonexpansive.

%%%%%%%%%%%%%%%%%%%%%%%%%%%%%%%%%%%%%%%%%%
\subsection{Link with the original neural network -- direct approach}\label{se:directlink}
In this subsection we go back to our initial model defined by~\eqref{def:modelNN-leakage}.
For simplicity, we assume that, for every 
$p$, $\varpi_p = 1$.
We consider two distinct inputs $z =(x,b)$ and $z'=(x',b')$ in $\X \times \X$. 
%The distance between $z$ and $z'$ is
%\[
%\|z'-z\| = \sqrt{\|x'-x\|^2 + \|b'-b\|^2}
%\; .
%\]
%
Let $z_m= (x_m,b_m)$ and $z_m'= (x'_m,b'_m)$
be the outputs of the $m$-th layer of virtual network~\eqref{def:nn-virtual-bias}
associated with inputs $z$ and $z'$, respectively.
Then, 
\[
\begin{split}
 \| z'_m - z_{m} \|^2 = & \; \| x'_{m} - x_{m}\|^2 + \| b'_{m} - b_{m}\|^2
\\
=& \; \| x'_{m} - x_{m}\|^2 + \eta_{1,m}^2 \| b' - b\|^2 
\; ,\\
\end{split}
\]
and, by applying Proposition~\ref{p:LipVNN-bias},
\[
\begin{split}
 \| z'_{m} - z_{m} \|^2 \leq \;
 \frac{\theta_m^2}{2^{2(m-1)}}\left(
 \| x'-x\|^2 + \| b'-b \|^2
 \right)
 \; .\\
\end{split}
\]
Then, the following inequality allows us to quantify the Lipschitz properties of the neural 
network~\eqref{def:modelNN} with an error on $b_0$:
\[
\| x'_{m} - x_{m} \|^2 \leq \;
 \frac{\theta_m^2}{2^{2(m-1)}}
 \| x'-x\|^2 
 + \left( \frac{\theta_m^2}{2^{2(m-1)}} - \eta_{1,m}^2 \right)\| b'-b \|^2
 \; .
\]
%\subsubsection{First estimates of the Lipschitz constant of the initial model}
Two particular cases are of interest:
\begin{itemize}
    \item If the network is initialized with a fixed signal, say $x_{0} = x'_{0}= 0$, then
 \[
 \| x'_{m} - x_{m} \|^2 \leq \; 
\left( \frac{\theta_m^2}{2^{2(m-1)}} - \eta_{1,m}^2 \right) 
\; \| b'-b\|^2
\; .
\]
So, a Lipschitz constant with respect to the 
input data $T^* y$ is 
\begin{equation}\label{e:Lipreal1-bias}
\vartheta_m = \sqrt{\frac{\theta_m^2}{2^{2(m-1)}} - \eta_{1,m}^2}.
\end{equation}
This shows, in particular, that the Lipschitz constant of the virtual network $\theta_m/2^{m-1}$ cannot be lower than $\eta_{1,m}$. This result is consistent with the observation made at the end of Section \ref{se:stabresVN}.
\item On the other hand, if the initialization is dependent
on the observed image, i.e. $x_{0}= b$ and $x'_{0} = b'$,
\[
\| x'_{m} - x_{m} \|^2 \leq \; 
\left(\frac{\theta_m^2}{2^{2m-3}} -\eta_{1,m}^2\right) 
\; \| b'-b\|^2
\; .
\]
So a higher Lipschitz constant value w.r.t. to the input data is obtained:
    \begin{equation}
\vartheta_m = \sqrt{\frac{\theta_m^2}{2^{2m-3}} -\eta_{1,m}^2}.
\end{equation}
\end{itemize}

\subsection{Use of a semi-norm}
Proposition \ref{prop:leakage} 
%and Remark \ref{re:directpasbon} 
suggests that we may need a finer strategy to evaluate 
the nonexpansiveness properties of
Model~\eqref{def:modelNN}.
%We consider Virtual Model~\eqref{def:nn-virtual}, where $\eta_n =1$
%for $n \in \{ 1, \ldots, m \} $.
On the product space $\X \times \X$, we define the semi-norm which takes only into account the first component of the vectors:
\begin{equation}
\label{def:seminormv}
    z = (x,b) \mapsto |z| = \|x\|.
\end{equation}
Let $L\colon \X\times \X \to \X\times \X$ be any bounded linear operator and, for every $z\in \X \times \X$, 
let $Lz = ((Lz)_{\rm x},(Lz)_{\rm b})$.
We define the associated operator semi-norm
\begin{equation}
    \label{def:seminorm}
| L | = \underset{\|z\| = 1 }{\text{sup}} \| (Lz)_{\rm x}\|
\; .
\end{equation}
The following preliminary result will be useful subsequently.
\begin{lemme}\label{e:normUni-proj}
Assume that $\X \times \X$ is equipped with the norm defined by \eqref{e:normweight}.
Let $m \in \mathbb{N}\setminus \{0\}$.
For every $n\in \{1,\ldots,m\}$ and 
$i\in \{1,\ldots,n\}$, 
the seminorm  $|U_n\circ \cdots \circ U_i|$ is 
equal to $\sqrt{\overline{a}_{i,n}}$ with
\begin{equation}\label{e:defain-proj}
\overline{a}_{i,n} = \sup_p
\left(
\beta_{i,n,p}^2 + \varpi_p \widetilde{\beta}_{i,n,p}^2
\right)
\le a_{i,n}
\; .
\end{equation}
\end{lemme}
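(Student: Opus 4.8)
The plan is to reduce the seminorm to a scalar optimization over each eigenspace, mirroring the proof of Lemma~\ref{e:normUni-bias} but keeping only the first output block. First I would use the block structure~\eqref{def:nn-virtual-bias} together with~\eqref{e:defWtin}, \eqref{def:WjWi} and~\eqref{def:N-bias} to write, for $z=(x,b)$, the action $U_n\circ\cdots\circ U_i\,z=(W_{i,n}x+\widetilde{W}_{i,n}b,\ \eta_{i,n}b)$. By the definition~\eqref{def:seminorm} of the seminorm, only the first component matters, so $|U_n\circ\cdots\circ U_i|=\sup_{\|z\|=1}\|W_{i,n}x+\widetilde{W}_{i,n}b\|$, the norm on $\X\times\X$ being the weighted one~\eqref{e:normweight}.

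Next I would decompose $(x,b)$ on the common eigenbasis $(v_p)_p$ as in~\eqref{e:decxbbasis}. Since $(\beta_{i,n,p},v_p)_p$ and $(\widetilde{\beta}_{i,n,p},v_p)_p$ are the eigensystems of $W_{i,n}$ and $\widetilde{W}_{i,n}$, this gives $\|W_{i,n}x+\widetilde{W}_{i,n}b\|^2=\sum_p(\beta_{i,n,p}\xp+\widetilde{\beta}_{i,n,p}\bp)^2$, while the constraint reads $\sum_p\big(\xp^2+\bp^2/\varpi_p\big)=1$. The key point is that, because the second block is discarded, the per-index form is now rank one, so a plain Cauchy--Schwarz applies: for each $p$,
\[
(\beta_{i,n,p}\xp+\widetilde{\beta}_{i,n,p}\bp)^2
\le \big(\beta_{i,n,p}^2+\varpi_p\widetilde{\beta}_{i,n,p}^2\big)\big(\xp^2+\bp^2/\varpi_p\big).
\]
Summing over $p$ and bounding the first factor by its supremum yields $|U_n\circ\cdots\circ U_i|^2\le\overline{a}_{i,n}$. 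For the reverse inequality I would reuse the $\epsilon$-concentration argument of the proof of Lemma~\ref{e:normUni-bias}: given $\epsilon>0$, pick $p^\ast$ with $\beta_{i,n,p^\ast}^2+\varpi_{p^\ast}\widetilde{\beta}_{i,n,p^\ast}^2>\overline{a}_{i,n}-\epsilon$, place all the mass at $p^\ast$ with $(\xi_{p^\ast},\zeta_{p^\ast}/\sqrt{\varpi_{p^\ast}})$ aligned with $(\beta_{i,n,p^\ast},\sqrt{\varpi_{p^\ast}}\widetilde{\beta}_{i,n,p^\ast})$ so as to saturate Cauchy--Schwarz, and let $\epsilon\to0$. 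This establishes $|U_n\circ\cdots\circ U_i|=\sqrt{\overline{a}_{i,n}}$.

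Finally, for the bound $\overline{a}_{i,n}\le a_{i,n}$ I would compare the two suprema term by term. Setting $A=\beta_{i,n,p}^2$, $B=\varpi_p\widetilde{\beta}_{i,n,p}^2$ and $E=\eta_{i,n}^2$, the summand defining $a_{i,n}$ in~\eqref{e:defain-bias-weighted} is the largest root $\nu_{i,n,p}=\tfrac12\big(A+B+E+\sqrt{(A+B+E)^2-4AE}\big)$, so it suffices to check $\nu_{i,n,p}\ge A+B$ for every $p$. This is equivalent to $\sqrt{(A+B+E)^2-4AE}\ge (A+B)-E$, which is immediate when $(A+B)-E\le0$; in the remaining case, squaring reduces it to $4BE\ge0$, true since $B,E\ge0$. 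Taking suprema over $p$ gives $\overline{a}_{i,n}\le a_{i,n}$.

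I do not expect a genuine obstacle: passing to the seminorm removes the bias block, collapsing the $2\times2$ eigenvalue problem of Lemma~\ref{e:normUni-bias} to a rank-one Cauchy--Schwarz estimate, which is strictly simpler. The only point needing mild care is, as before, handling the supremum over the possibly infinite family of eigenspaces, dispatched by the same concentration argument; the inequality $\overline{a}_{i,n}\le a_{i,n}$ is then a short algebraic verification.
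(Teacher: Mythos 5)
Your proof is correct, but it takes a more self-contained route than the paper. The paper dispatches both claims by reduction: the inequality $\overline{a}_{i,n}\le a_{i,n}$ follows immediately from $|L|\le\|L\|$, and the value of the seminorm is obtained by observing that $|U_n\circ\cdots\circ U_i|$ coincides with the \emph{norm} of the operator in which $\eta_{i,n}$ is set to $0$, so that Lemma~\ref{e:normUni-bias} (in its weighted form~\eqref{e:defain-bias-weighted}) applies verbatim with $\eta_{i,n}=0$ and yields $\sup_p(\beta_{i,n,p}^2+\varpi_p\widetilde{\beta}_{i,n,p}^2)$. You instead recompute the seminorm from scratch: your observation that discarding the bias block turns the per-eigenspace $2\times 2$ problem into a rank-one form, solved by weighted Cauchy--Schwarz with the concentration argument for the supremum, is exactly the degenerate $E=\eta_{i,n}^2=0$ case of the matrix $A'_{i,n,p}$ of Appendix~\ref{secA0} (whose largest eigenvalue is then its trace $A+B$), and your algebraic verification that $\tfrac12\bigl(A+B+E+\sqrt{(A+B+E)^2-4AE}\bigr)\ge A+B$ reduces correctly to $4BE\ge 0$. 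Both arguments are sound; the paper's citation-based reduction is shorter and avoids redoing the eigenvalue computation, while yours makes explicit why the formula simplifies and replaces the soft bound $|L|\le\|L\|$ by a direct comparison of the two spectral expressions. No gap.
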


\begin{proof}
The inequality
$\overline{a}_{i,n} \le
a_{i,n}$ follows from the fact that
$|U_n\circ \cdots \circ U_i|\le \|U_n\circ \cdots \circ U_i\|$.\\
The seminorm of $U_n\circ \cdots \circ U_i$ is the same as the norm
of $U_n\circ \cdots \circ U_i$ where $\eta_{n}$ has been set to 0.
The result thus follows from Lemma \ref{e:normUni-bias} where $\eta_{i,n}= 0$.\\
%Let us now show that, for every 
%$p$,
%\begin{equation}
%\beta_{i,n,p}^2 + \widetilde{\beta}_{i,n,p}^2
%\le \frac{1}{2}
%\left(
%\beta_{i,n,p}^2 + \widetilde{\beta}_{i,n,p}^2 + \eta_{i,n}^2+ %\sqrt{(\beta_{i,n,p}^2  + \widetilde{\beta}_{i,n,p}^2  +  \eta_{i,n}^2 )^2
%        - 4\beta_{i,n,p}^2 \eta_{i,n}^2}
%\right) \; ,
%\end{equation}
%that is
%\begin{equation}
%\label{e:ineqbaralesa}
%\beta_{i,n,p}^2 + \widetilde{\beta}_{i,n,p}^2
%- \eta_{i,n}^2
%\le  \sqrt{(\beta_{i,n,p}^2  + \widetilde{\beta}_{i,n,p}^2  +  \eta_{i,n}^2 )^2
%        - 4\beta_{i,n,p}^2 \eta_{i,n}^2} \; .
%\end{equation}
%The inequality obviously holds
%if $\beta_{i,n,p}^2 + \widetilde{\beta}_{i,n,p}^2
%\le \eta_{i,n}^2$. Let us
%focus on the case when
%$\beta_{i,n,p}^2 + \widetilde{\beta}_{i,n,p}^2
%> \eta_{i,n}^2$. Inequality
%\eqref{e:ineqbaralesa} is then equivalent to
%\begin{align}
%&(\beta_{i,n,p}^2 + \widetilde{\beta}_{i,n,p}^2
%- \eta_{i,n}^2)^2
%\le  (\beta_{i,n,p}^2  + \widetilde{\beta}_{i,n,p}^2  +  \eta_{i,n}^2 )^2
%        - 4\beta_{i,n,p}^2 \eta_{i,n}^2\nonumber\\
%\Leftrightarrow\quad
%& -2 (\widetilde{\beta}_{i,n,p}^2+\beta_{i,n,p}^2)
%\eta_{i,n}^2 \le
%2 (\widetilde{\beta}_{i,n,p}^2-
%\beta_{i,n,p}^2)\eta_{i,n}^2 \; ,
%\end{align}
%which is always satisfied.
%We deduce that 
\end{proof}

\subsubsection{Network with input $(x_0,b_0)$ and output $x_m$}
\label{se:x0b0xm}
The following result for the quantification of the Lipschitz regularity
is an offspring of Proposition~\ref{p:LipVNN-bias}.
\begin{proposition}
\label{p:LipVNN-proj}
Let $m \in \mathbb{N}\setminus \{0\}$. 
For every $i\in \{1,\ldots,n\}$ and $n\in \{1,\ldots,m-1\}$,
let $a_{i,n}$ be defined by
\eqref{e:defain-bias} and
let $\overline{a}_{i,m}$ be given by
\eqref{e:defain-proj}.
Set $\theta_0 = 1$ and
define 
\begin{align}
&(\forall n \in \{1,\ldots,m-1\})
\quad 
\theta_n = 
\sum_{i=1}^n \theta_{i-1} \sqrt{a_{i,n}}\;,\\
&\overline{\theta}_m = 
\sum_{i=1}^m \theta_{i-1} \sqrt{\overline{a}_{i,m}}\;.
\end{align}
Then
the network~\eqref{def:modelNN-leakage} with input $(x_0,b_0)$ and output $x_m$ is $\overline{\theta}_m/2^{m-1}$-Lipschitz. In addition,
\begin{equation}
    \sqrt{\overline{a}_{1,m}}
    \le \frac{\overline{\theta}_m}{2^{m-1}}
    \le \sqrt{\overline{a}_{m,m}}
    \Big(\prod_{n=1}^{m-1} a_{n,n}\Big)^{1/2}.
\end{equation}

\end{proposition}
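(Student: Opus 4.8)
The plan is to mirror the structure of the proof of Proposition~\ref{p:LipVNN-bias}, replacing the norm $\|\cdot\|$ by the seminorm $|\cdot|$ in the very last layer only. The key observation is that the seminorm $|z| = \|x\|$ measures just the first component, so when we track how perturbations propagate through the cascade $S = Q_m\circ U_m \circ \cdots \circ Q_1 \circ U_1$, the firmly nonexpansive operators $Q_n$ and the linear operators $U_n$ behave exactly as in Proposition~\ref{prop:oldres} for the first $m-1$ layers, while at the final layer we only need to control the $\X$-component of the output. Concretely, I would first invoke the recursive formula from \cite[Lemma 3.3]{Combettes2019} underlying Proposition~\ref{p:LipVNN-bias}, which expresses the Lipschitz bound as $\theta_m = \sum_{i=1}^m \theta_{i-1}\|U_m\circ\cdots\circ U_i\|$, and then argue that the analogous seminorm-based bound is obtained by substituting $|U_m\circ\cdots\circ U_i|$ for $\|U_m\circ\cdots\circ U_i\|$ in the terms where the index $n$ equals $m$. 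Since $Q_m$ is firmly nonexpansive (hence nonexpansive) and the seminorm of the output after applying $Q_m$ is controlled by the $\X$-component, the composition bound $\theta_m/2^{m-1}$ carries over with $\sqrt{a_{i,m}}$ replaced by $\sqrt{\overline{a}_{i,m}}$, giving exactly $\overline{\theta}_m/2^{m-1}$.

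More carefully, I would establish the Lipschitz estimate by reproducing the telescoping/combinatorial argument of Proposition~\ref{prop:oldres}\ref{prop:oldresi}, but tracking the seminorm at the output. The essential point is that the $2^{m-1}$ normalization arises from the firm nonexpansiveness of the $m$ operators $Q_1,\ldots,Q_m$ via the averaging identity (each $Q_n$ being $1/2$-averaged contributes a factor that, summed over the $2^{m-1}$ sign patterns, yields the stated bound). Because we only seminorm the final output, the norms of the linear blocks $U_n\circ\cdots\circ U_i$ entering the sum are the full norms $\sqrt{a_{i,n}}$ for $n\le m-1$ and the seminorms $\sqrt{\overline{a}_{i,m}}$ for the terminal block; this is precisely the definition of $\overline{\theta}_m$. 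I would appeal to Lemma~\ref{e:normUni-proj} to identify $|U_m\circ\cdots\circ U_i| = \sqrt{\overline{a}_{i,m}}$ and to Lemma~\ref{e:normUni-bias} for the interior norms.

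For the two-sided bound, I would follow the template of inequality~\eqref{e:looseLipb} in Proposition~\ref{p:LipVNN-bias} together with \cite[Proposition 4.3(i)]{Combettes2020}. The lower bound $\sqrt{\overline{a}_{1,m}} \le \overline{\theta}_m/2^{m-1}$ comes from retaining only the single term $i=1$ in the sum (since $\theta_0=1$), or equivalently from $|U_m\circ\cdots\circ U_1| \le \overline{\theta}_m/2^{m-1}$, using Lemma~\ref{e:normUni-proj}. The upper bound follows by recognizing that $\overline{\theta}_m/2^{m-1}$ factorizes as the seminorm of the last block times the product of the per-layer norms of the interior blocks; invoking the product bound from \cite[Proposition 4.3(i)]{Combettes2020} applied to the first $m-1$ layers gives $\theta_{m-1}/2^{m-2}\le \prod_{n=1}^{m-1}\|U_n\| = \prod_{n=1}^{m-1}\sqrt{a_{n,n}}$, and multiplying by the terminal factor $\sqrt{\overline{a}_{m,m}}$ yields the stated right-hand side.

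The main obstacle I anticipate is making the seminorm substitution rigorous inside the combinatorial sum of Proposition~\ref{prop:oldres}: one must verify that the firm nonexpansiveness argument still closes when the terminal metric is only a seminorm rather than a full norm. The resolution is that a seminorm is sublinear and the triangle inequality and homogeneity—the only properties used in the averaging/telescoping estimate—both hold for seminorms; the firm nonexpansiveness of $Q_m$ with respect to $\|\cdot\|$ in the codomain still yields the needed control on $\|(\cdot)_{\rm x}\|$ because projecting onto the first component is $1$-Lipschitz. Once this compatibility is checked, the remaining steps are routine substitutions of Lemmas~\ref{e:normUni-bias} and~\ref{e:normUni-proj} into the already-established recursive and product formulas.
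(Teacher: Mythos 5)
Your proposal is correct in substance and uses the same ingredients as the paper (the recursion from \cite[Lemma 3.3]{Combettes2019}, the two-sided bound from \cite[Proposition 4.3(i)]{Combettes2020}, and Lemmas~\ref{e:normUni-bias} and~\ref{e:normUni-proj}), but the paper takes a shorter route that entirely avoids the step you flag as your ``main obstacle.'' Rather than re-running the combinatorial/averaging argument of Proposition~\ref{prop:oldres} with a seminorm on the codomain, the paper writes the input--output map as $R_m \circ \overline{U}_m \circ Q_{m-1}\circ U_{m-1}\circ\cdots\circ Q_1\circ U_1$ with $\overline{U}_m = D_{\rm x}\circ U_m$, $D_{\rm x}=[\1\;\;0]$; since Proposition~\ref{prop:oldres} is already stated for layers between \emph{different} Hilbert spaces, it applies verbatim with $\mathcal{H}_m=\X$, and no seminorm compatibility needs to be checked --- the seminorm $|U_m\circ\cdots\circ U_i|$ is simply the operator norm of $D_{\rm x}\circ U_m\circ\cdots\circ U_i$. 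Better still, this modified network coincides with the VNN in which $\eta_m$ is set to $0$, so both the Lipschitz constant and the two-sided bound follow from Proposition~\ref{p:LipVNN-bias} by substituting $\eta_{1,m}=0$, which turns each $a_{i,m}$ into $\overline{a}_{i,m}$; your plan instead reproves these facts by hand. One small slip in your sketch of the lower bound: retaining only the $i=1$ term of the sum gives $\overline{\theta}_m\ge\sqrt{\overline{a}_{1,m}}$, which is weaker by a factor $2^{m-1}$ than the claim; the correct justification is your alternative one, namely the lower bound $\|\overline{U}_m\circ U_{m-1}\circ\cdots\circ U_1\|\le\overline{\theta}_m/2^{m-1}$ from \cite[Proposition 4.3(i)]{Combettes2020} combined with Lemma~\ref{e:normUni-proj}, exactly as in \eqref{e:looseLipb}.
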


\begin{proof}
Network~\eqref{def:modelNN-leakage}
can be expressed as $R_m \circ \overline{U}_m \circ Q_{m-1}\circ U_{m-1}\circ \cdots \circ Q_1 \circ U_1$ where 
\begin{equation}\label{e:oUm}
\overline{U}_m = D_{\rm x}\circ U_m
\end{equation}
and $D_{\rm x}$ is the decimation operator
\begin{equation}
    D_{\rm x}= [\1\quad 0].
\end{equation}
This  network has the same Lipschitz properties as the network in~\eqref{def:nn-virtual-bias} with $\eta_{m}= 0$.
The result can thus be deduced from Proposition \ref{p:LipVNN-bias} by setting $\eta_{1,m}= 0$.
\end{proof}

Note that the resulting estimate of the Lipschitz constant is smaller than the one obtained for the VNN. Indeed, according to 
Lemma \ref{e:normUni-proj},
\begin{equation}\label{e:overthetabettertheta}
\overline{\theta}_m = 
\sum_{i=1}^m \theta_{i-1} \sqrt{\overline{a}_{i,m}}
\le \sum_{i=1}^m \theta_{i-1} \sqrt{a_{i,m}} = \theta_m\;.
\end{equation}

\subsubsection{Network with a single input and a single output}\label{se:netsingisingo}
Another possibility for investigating Lipschitz  properties,
including averagedness,
consists of defining a network from $\X$ to $\X$. 
We will focus on specific networks of the form
\begin{equation}\label{e:NNreal}
R_m\circ \overline{U}_m \circ Q_{m-1}\circ U_{m-1}\cdots\circ
Q_1 \circ \widehat{U}_{1},
\end{equation}
where $\overline{U}_{m}$ is given by \eqref{e:oUm}
and
\begin{equation}
\widehat{U}_{1} = U_1 \begin{bmatrix}
F\\
\1
\end{bmatrix}\,.
\end{equation}
Hereabove, $F$ is a bounded linear operator from $\X$ to $\X$.
We will make the following additional assumption:\\
\fbox{
Operator $F$
can be diagonalized
in basis
$(v_{p})_{p}$, 
its eigenvalues being denoted
by $(\phi_p)_p$.
%in the same orthonormal set of eigenvectors $(v_{p})_{p}$.
}

This assumption encompasses the three following cases:
\begin{enumerate}
\item\label{caseNN1} The first one assumes that $x_0 = 0$ in \eqref{def:modelNN}, \textcolor{black}{by setting} 
$F = 0$.
%It is thus given by 
%\begin{equation}
%\widehat{U}_{1} = U_1 %\begin{bmatrix}
%0\\
%\1
%\end{bmatrix}\,.
%\end{equation}
\item\label{caseNN2} The second one assumes that $x_0=b_0$ in \eqref{def:modelNN}, \textcolor{black}{by setting}  $F = \1$ and, for every $p$, $\phi_p = 1$.
%It is thus given by
%\begin{equation}
%\widehat{U}_{1} = U_1 %\begin{bmatrix}
%\1\\
%\1
%\end{bmatrix}\,.
%\end{equation}
\item The third one assumes that
$x_0 = F b_0 = FT^* y$ where 
$F$ is a pre-filter, while 
$T$ and $(D_n)_{1\le n \le m}$ also are filtering operations. For instance,
$F$ could perform a rough 
restoration of the sought signal from the observed one $b_0$ \cite{Terris2019}, possibly by ensuring that $F T^*$ is a Wiener filter. In this case, $(\phi_p)_p$ corresponds to the complex-valued frequency response of this pre-filter at discrete frequencies indexed by $p$.
\end{enumerate}

Similarly to Lemma \ref{e:normUni-bias}, we obtain the following 
result. 
\begin{lemme}\label{e:normUni-proj2}
Assume that $\X \times \X$ is equipped with the norm defined by \eqref{e:normweight} and the input 
space is $(\X,\|\cdot\|_\varpi)$.
Let $m \in \mathbb{N}\setminus \{0,1\}$.
For every $n\in \{1,\ldots,m-1\}$,
the norm
of $U_n\circ \cdots \circ U_{2}\circ \widehat{U}_{1}$ is 
equal to $\sqrt{\widehat{a}_{1,n}}$ with
\begin{equation}\label{e:defain-bias2}
\widehat{a}_{1,n} = 
\sup_p \left(\varpi_p|\beta_{1,n,p}\phi_p+\widetilde{\beta}_{1,n,p}|^2\right) + \eta_{1,n}^2 
\end{equation}
%\begin{equation}\label{e:defain-bias2}
%\widehat{a}_{1,n} = 
%\begin{cases}
%\sup_p \widetilde{\beta}_{1,n,p}^2 + \eta_{1,n}^2  & \mbox{in %case \ref{caseNN1}}\\
%\sup_p \left((\beta_{1,n,p}+\widetilde{\beta}_{1,n,p})^2\right) + %\eta_{1,n}^2  & \mbox{in case \ref{caseNN2}}
%\end{cases}
%\end{equation}
and the norm of $\overline{U}_m\circ U_{m-1}\circ \cdots \circ U_{2}\circ \widehat{U}_{1}$
is  equal to $\sqrt{\widehat{a}_{1,m}}$ with
\begin{equation}\label{e:defain-projm2}
\widehat{a}_{1,m} = 
%\begin{cases}
%\sup_p
%\widetilde{\beta}_{1,m,p}^2 & \mbox{in case \ref{caseNN1}}\\
\sup_p \varpi_p
|\beta_{1,m,p}\phi_p+ \widetilde{\beta}_{1,m,p}|^{2} \;.
%& \mbox{in case \ref{caseNN2}.}
%\end{cases}
\end{equation}
%\begin{equation}\label{e:defain-projm2}
%\widehat{a}_{1,m} = 
%\begin{cases}
%\sup_p
%\widetilde{\beta}_{1,m,p}^2 & \mbox{in case \ref{caseNN1}}\\
%\sup_p
%(\beta_{1,m,p}+ \widetilde{\beta}_{1,m,p})^{2}  & %\mbox{in case \ref{caseNN2}.}
%\end{cases}
%\end{equation}
\end{lemme}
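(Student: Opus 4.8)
The plan is to exploit the block-triangular structure of the composed operators together with their simultaneous diagonalizability in the basis $(v_p)_p$, so that each operator norm reduces, eigenspace by eigenspace, to a scalar supremum, exactly as in the proof of Lemma~\ref{e:normUni-bias}.

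First I would factor out the prefilter. By definition $\widehat U_1=U_1\begin{bmatrix}F\\ \1\end{bmatrix}$, so by associativity $U_n\circ\cdots\circ U_2\circ\widehat U_1=(U_n\circ\cdots\circ U_1)\begin{bmatrix}F\\ \1\end{bmatrix}$, and by \eqref{def:U-bias} (with $m$ replaced by $n$) the composition $U_n\circ\cdots\circ U_1$ is the triangular operator with blocks $W_{1,n}$, $\widetilde W_{1,n}$, $0$, $\eta_{1,n}\1$. Carrying out the block product gives, for every $x\in\X$,
\[
U_n\circ\cdots\circ U_2\circ\widehat U_1\, x=\bigl((W_{1,n}F+\widetilde W_{1,n})x,\ \eta_{1,n}x\bigr).
\]
This uses \eqref{def:U-bias} to do all the bookkeeping of the nested $\widetilde W$ sums automatically, avoiding any manual recombination of terms.

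Second, I would diagonalize. Operator $F$ shares the eigenbasis $(v_p)_p$ with eigenvalues $(\phi_p)_p$, while $(\beta_{1,n,p})_p$ from \eqref{def:vp1} and $(\widetilde\beta_{1,n,p})_p$ from \eqref{def:vp-bias} are the eigenvalues of $W_{1,n}$ and $\widetilde W_{1,n}$ on $v_p$; hence $W_{1,n}F+\widetilde W_{1,n}$ acts on $v_p$ by the scalar $\beta_{1,n,p}\phi_p+\widetilde\beta_{1,n,p}$. Writing $x=\sum_p\xi_p v_p$, the input norm is $\|x\|_\varpi^2=\sum_p|\xi_p|^2/\varpi_p$, whereas the output lives in $\X\times\X$ endowed with \eqref{e:normweight} (standard norm on the first block, $\|\cdot\|_\varpi$ on the second), so its squared norm equals $\sum_p\bigl(|\beta_{1,n,p}\phi_p+\widetilde\beta_{1,n,p}|^2+\eta_{1,n}^2/\varpi_p\bigr)|\xi_p|^2$. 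Since distinct eigenspaces do not couple, the change of variables $u_p=\xi_p/\sqrt{\varpi_p}$ turns the squared operator norm into the supremum of $\sum_p\bigl(\varpi_p|\beta_{1,n,p}\phi_p+\widetilde\beta_{1,n,p}|^2+\eta_{1,n}^2\bigr)|u_p|^2$ over $\sum_p|u_p|^2=1$, which equals $\sup_p\bigl(\varpi_p|\beta_{1,n,p}\phi_p+\widetilde\beta_{1,n,p}|^2+\eta_{1,n}^2\bigr)=\widehat a_{1,n}$. I would justify exchanging the supremum and the sum by the same $\epsilon$-argument as for \eqref{e:defnormU2-bias}: an upper bound is immediate, and testing with a sequence supported at a near-maximizing index $p^\ast$ yields the matching lower bound.

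Finally, for the second identity I would invoke $\overline U_m=D_{\rm x}\circ U_m$ from \eqref{e:oUm}, so that $\overline U_m\circ U_{m-1}\circ\cdots\circ U_2\circ\widehat U_1$ retains only the first block of $U_m\circ\cdots\circ U_2\circ\widehat U_1$, namely $x\mapsto(W_{1,m}F+\widetilde W_{1,m})x$. This is the previous computation with the second block discarded, i.e. the case $\eta_{1,m}=0$, and with the standard norm on the $\X$-valued output; the same eigenspace-wise supremum then gives $\sup_p\varpi_p|\beta_{1,m,p}\phi_p+\widetilde\beta_{1,m,p}|^2=\widehat a_{1,m}$. I do not expect a genuinely hard step here; the only real care needed is to keep track of which norm ($\|\cdot\|$ or $\|\cdot\|_\varpi$) is attached to the input and to each output block, and to legitimize the eigenspace-wise maximization exactly as in Lemma~\ref{e:normUni-bias}.
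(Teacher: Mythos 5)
Your proposal is correct and follows essentially the same route as the paper: expand $U_n\circ\cdots\circ U_2\circ\widehat U_1$ via the triangular structure to get $b_0\mapsto\bigl((W_{1,n}F+\widetilde W_{1,n})b_0,\ \eta_{1,n}b_0\bigr)$, decompose $b_0$ in the eigenbasis, maximize eigenspace by eigenspace against the weighted input norm, and treat the second identity as the case $\eta_{1,m}=0$ after applying $D_{\rm x}$. You are in fact slightly more explicit than the paper on two points it leaves implicit, namely the change of variables $u_p=\xi_p/\sqrt{\varpi_p}$ and the $\epsilon$-argument showing the supremum is actually attained as the operator norm.
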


\begin{proof}
For every $n\in \{1,\ldots,m-1\}$,
\begin{equation}\label{e:oneonepreLip}
\begin{split}
\|U_n\circ \cdots \circ U_2
\circ \widehat{U}_1
b_0 \|^2 = & \; \| W_{1,n} F b_0 + \widetilde{W}_{1,n} b_0\|^2 + \eta_{1,n}^2 \|b_0\|_\varpi^2\\
= & \sum_p \big(|\beta_{1,n,p}\phi_p
+\widetilde{\beta}_{1,n,p}|^2
+\varpi_p^{-1}\eta_{1,n}^2\big) |\zeta_p|^2\\
\le & \sup_p \big(\varpi_p|\beta_{1,n,p}\phi_p
+\widetilde{\beta}_{1,n,p}|^2
+\eta_{1,n}^2\big)
\|b_0\|_\varpi^2
\;,
\end{split}
\end{equation}
where $b_0$ has been decomposed as
\[
b_0 = \sum_p \zeta_p v_p.
\]
It can be deduced that
\[
\|U_n\circ \cdots \circ U_2
\circ \widehat{U}_1\|^2 = 
\widehat{a}_{1,n}\;.
%\sup_p \big(|\beta_{1,n,p}\phi_p
%+\widetilde{\beta}_{1,n,p}|^2
%+\eta_{1,n}^2\big).
\]
Equation \eqref{e:oneonepreLip} remains valid when 
$n=m$, by setting $\eta_m = 0$, i.e. 
$\eta_{1,m} = 0$. This leads to the expression
of $\widehat{a}_{1,m}$.
\end{proof}

By proceeding similarly to the proof of Proposition \ref{prop:ab-bias}, we deduce the following 
stability result.
\begin{proposition}
\label{p:LipVNN-proj2}
Let $m \in \mathbb{N}\setminus \{0,1\}$. 
For every $i\in \{2,\ldots,n\}$ and $n\in \{1,\ldots,m-1\}$,
let $a_{i,n}$ be defined by
\eqref{e:defain-bias} and
let $\overline{a}_{i,m}$ be given by
\eqref{e:defain-proj}.
For every $n\in \{1,\ldots,m\}$, let $\widehat{a}_{1,n}$
be defined by \eqref{e:defain-bias2} and  \eqref{e:defain-projm2}.
Define $(\widehat{\theta}_n)_{1\le n \le m}$ recursively by
\begin{align}
\label{e:widehathetan}
&(\forall n \in \{1,\ldots,m-1\})
\quad 
\widehat{\theta}_n = \sqrt{\widehat{a}_{1,n}}+
\sum_{i=2}^n \widehat{\theta}_{i-1} \sqrt{a_{i,n}}\;,\\
\label{e:widehathetam}
&\widehat{\theta}_m = \sqrt{\widehat{a}_{1,m}}+
\sum_{i=2}^m \widehat{\theta}_{i-1} \sqrt{\overline{a}_{i,m}}\;,
\quad \widehat{\theta}_0 = 1\;.
\end{align}
Then
network \eqref{e:NNreal} is $\widehat{\theta}_m /2^{m-1}$-Lipschitz. In addition,
\begin{equation}
\sqrt{\widehat{a}_{1,m}} \le 
\frac{\widehat{\theta}_m}{2^{m-1}}
\le \left(\overline{a}_{m,m}
\Big(\prod_{n=2}^{m-1} a_{n,n}\Big) \widehat{a}_{1,1}\right)^{1/2}.
\end{equation}
\end{proposition}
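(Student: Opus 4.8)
The plan is to mirror the structure already used in the proof of Proposition~\ref{p:LipVNN-proj}, adapting it to the network~\eqref{e:NNreal} which differs only in its first layer, where the input operator $\widehat{U}_1 = U_1\begin{bmatrix}F\\\1\end{bmatrix}$ replaces $U_1$, and in its last layer, where $\overline{U}_m = D_{\rm x}\circ U_m$ replaces $U_m$. First I would recall from Proposition~\ref{prop:oldres}\ref{prop:oldresi} that for a cascade of firmly nonexpansive operators $Q_n$ interleaved with bounded linear operators, $\theta_m/2^{m-1}$ is a Lipschitz constant, where $\theta_m$ is given by the sum~\eqref{e:defthetaell} over all strictly increasing subsequences of layer indices. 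The essential observation, already exploited in Proposition~\ref{p:LipVNN-bias}, is that this quantity obeys the recursion from \cite[Lemma 3.3]{Combettes2019}, namely $\theta_n = \sum_{i=1}^n \theta_{i-1}\,\|(\text{product of linear operators from layer } i \text{ to } n)\|$ with $\theta_0 = 1$.

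The key step is to identify, for each starting index $i$ and each terminal index $n$, the correct operator norm to insert into this recursion. When the product starts at layer $i\ge 2$ and ends at an intermediate layer $n\le m-1$, the relevant operator is $U_n\circ\cdots\circ U_i$, whose norm is $\sqrt{a_{i,n}}$ by Lemma~\ref{e:normUni-bias}. When the product starts at the first layer $i=1$ and ends at $n\le m-1$, the relevant operator is $U_n\circ\cdots\circ U_2\circ\widehat{U}_1$, whose norm is $\sqrt{\widehat{a}_{1,n}}$ by Lemma~\ref{e:normUni-proj2}. When the product ends at the last layer $n=m$, the presence of $\overline{U}_m = D_{\rm x}\circ U_m$ (which kills the $b$-component, equivalently sets $\eta_m=0$) means the norm becomes $\sqrt{\overline{a}_{i,m}}$ for $i\ge 2$, via Lemma~\ref{e:normUni-proj}, and $\sqrt{\widehat{a}_{1,m}}$ for $i=1$, via the second part of Lemma~\ref{e:normUni-proj2}. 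Substituting these four cases into the recursion yields exactly the definitions~\eqref{e:widehathetan}--\eqref{e:widehathetam}, with the $i=1$ term separated out because it carries the operator $\widehat{U}_1$ rather than $U_1$. This establishes that $\widehat{\theta}_m/2^{m-1}$ is a Lipschitz constant of network~\eqref{e:NNreal}.

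For the two-sided bound, I would invoke \cite[Proposition 4.3(i)]{Combettes2020}, which gives $\|U_m\circ\cdots\circ U_1\| \le \theta_m/2^{m-1} \le \prod_{n=1}^m\|U_n\|$ for the generic cascade. In the present setting the overall linear operator is $\overline{U}_m\circ U_{m-1}\circ\cdots\circ U_2\circ\widehat{U}_1$, whose norm is $\sqrt{\widehat{a}_{1,m}}$ by Lemma~\ref{e:normUni-proj2}; this is the lower bound. The upper bound is the product of the individual layer norms, which are $\sqrt{\widehat{a}_{1,1}}$ for the first layer (input operator $\widehat{U}_1$), $\sqrt{a_{n,n}}$ for each intermediate layer $2\le n\le m-1$, and $\sqrt{\overline{a}_{m,m}}$ for the last layer (whose $b$-component is decimated). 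Multiplying these gives $\big(\overline{a}_{m,m}\prod_{n=2}^{m-1}a_{n,n}\,\widehat{a}_{1,1}\big)^{1/2}$, as claimed.

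The main obstacle is purely bookkeeping: one must verify that the recursion~\eqref{e:widehathetan}--\eqref{e:widehathetam} really corresponds to the subsequence-sum~\eqref{e:defthetaell} once the nonstandard first and last layers are accounted for, i.e. that splitting off the $i=1$ summand and replacing $a_{i,m}$ by $\overline{a}_{i,m}$ (and $\widehat{a}_{1,n}$ by $\widehat{a}_{1,m}$) at the terminal layer correctly tracks which linear factors appear in each term. Since $D_{\rm x}\circ U_m$ and $U_m$ with $\eta_m=0$ produce the same action on the first component, and since Lemmas~\ref{e:normUni-proj} and~\ref{e:normUni-proj2} already supply the modified norms, no genuinely new estimate is required; the argument reduces to carefully instantiating Proposition~\ref{prop:oldres}\ref{prop:oldresi} and \cite[Proposition 4.3(i)]{Combettes2020} with the correct per-segment norms.
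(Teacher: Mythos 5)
Your proposal is correct and follows exactly the route the paper intends: the paper omits a detailed proof of Proposition~\ref{p:LipVNN-proj2}, stating only that it is obtained by proceeding as in the earlier Lipschitz propositions, and your argument instantiates Proposition~\ref{prop:oldres}\ref{prop:oldresi}, the recursion of \cite[Lemma 3.3]{Combettes2019}, and \cite[Proposition 4.3(i)]{Combettes2020} with the per-segment norms supplied by Lemmas~\ref{e:normUni-bias}, \ref{e:normUni-proj}, and \ref{e:normUni-proj2}, which is precisely the intended bookkeeping. The identification of the four cases (interior segments, segments starting at the modified first layer, segments ending at the decimated last layer, and the full product) and the resulting recursion and two-sided bound all match the statement.
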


As shown hereafter, the obtained Lipschitz constant is an increasing 
function of the pre-filter components 
$(\phi_p)_p$ when those are nonnegative and real-valued.
\begin{proposition}\label{prop:influencephi}
Assume that
\begin{equation}\label{e:limistep}
(\forall  n\in \{1,\ldots,m\})\quad 
\sup_p \lambda_n (\beta_{T,p}+\beta_{D_n,p}) \le 1. 
\end{equation}
Let us denote by $\widehat{\theta}_m(\phi)$
the Lipschitz parameter defined in the previous proposition for a given value of $\phi = (\phi_p)_p$. If $\phi = (\phi_p)_p$
and $\phi' = (\phi'_p)$ are two sequences of nonnegative reals, then
\[
(\forall p)\;\;\phi_p \ge \phi'_p
\quad 
\Rightarrow\quad \widehat{\theta}_m(\phi) \ge
 \widehat{\theta}_m(\phi').
\]
\end{proposition}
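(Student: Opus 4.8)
The plan is to prove the monotonicity of $\widehat{\theta}_m$ in $\phi$ by tracking how the dependence on $\phi$ enters the recursion \eqref{e:widehathetan}--\eqref{e:widehathetam} and showing that each relevant quantity is monotone. First I would observe that among all the ingredients of the recursion, only the terms $\sqrt{\widehat{a}_{1,n}}$ (for $n\in\{1,\ldots,m\}$) depend on $\phi$, since $a_{i,n}$ and $\overline{a}_{i,m}$ involve only $\beta_{i,n,p}$, $\widetilde{\beta}_{i,n,p}$, $\eta_{i,n}$, $\varpi_p$, none of which carry $\phi$. Hence it suffices to prove that each $\widehat{a}_{1,n}$ is nondecreasing in every $\phi_p$, and then propagate this through the recursion.

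To establish monotonicity of $\widehat{a}_{1,n}$, I would appeal to the closed-form expressions \eqref{e:defain-bias2} and \eqref{e:defain-projm2}, which both feature the term $|\beta_{1,n,p}\phi_p+\widetilde{\beta}_{1,n,p}|^2$ under a supremum over $p$. For real nonnegative $\phi_p$, this is a sum $(\beta_{1,n,p}\phi_p+\widetilde{\beta}_{1,n,p})^2$, and here the key structural fact is needed: under assumption \eqref{e:limistep}, each factor $\beta_p^{(n)} = \frac{1}{1+\lambda_n\chi_n}(1-\lambda_n(\beta_{T,p}+\beta_{D_n,p}))$ lies in $[0,1]$, so the products $\beta_{1,n,p}$ are nonnegative, and inspecting \eqref{def:vp-bias} shows $\widetilde{\beta}_{1,n,p}\ge 0$ as well. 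Consequently $\beta_{1,n,p}\phi_p+\widetilde{\beta}_{1,n,p}\ge 0$ is a nondecreasing function of $\phi_p$, so its square is too, and the supremum of a family of pointwise-nondecreasing functions is itself nondecreasing. This gives $\phi\ge\phi'$ (componentwise) $\Rightarrow \widehat{a}_{1,n}(\phi)\ge\widehat{a}_{1,n}(\phi')$ for every $n$.

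Finally I would run an induction on $n$ along the recursion to conclude $\widehat{\theta}_m(\phi)\ge\widehat{\theta}_m(\phi')$. The base case $\widehat{\theta}_0=1$ is trivial and $\phi$-independent. Assuming $\widehat{\theta}_{i-1}(\phi)\ge\widehat{\theta}_{i-1}(\phi')\ge 0$ for all $i\le n$, the recursion $\widehat{\theta}_n = \sqrt{\widehat{a}_{1,n}}+\sum_{i=2}^n \widehat{\theta}_{i-1}\sqrt{a_{i,n}}$ expresses $\widehat{\theta}_n$ as a sum of the $\phi$-monotone term $\sqrt{\widehat{a}_{1,n}}$ and nonnegative combinations $\widehat{\theta}_{i-1}\sqrt{a_{i,n}}$ of quantities already shown monotone (the $\sqrt{a_{i,n}}$ being $\phi$-independent and nonnegative); hence $\widehat{\theta}_n$ is monotone. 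The same argument applies to $\widehat{\theta}_m$ using \eqref{e:widehathetam}, which only replaces $a_{i,m}$ by the $\phi$-independent $\overline{a}_{i,m}$.

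The main obstacle is the sign analysis: the clean monotonicity hinges entirely on all three quantities $\beta_{1,n,p}$, $\widetilde{\beta}_{1,n,p}$, and $\phi_p$ being nonnegative, so that $|\beta_{1,n,p}\phi_p+\widetilde{\beta}_{1,n,p}|^2$ increases in $\phi_p$ without cancellation. This is exactly why the stepsize condition \eqref{e:limistep} is imposed — it guarantees $\beta_p^{(n)}\in[0,1]$ and thus the nonnegativity of the products and of $\widetilde{\beta}_{1,n,p}$. If any $\beta_p^{(n)}$ were allowed to be negative, the term inside the absolute value could decrease as $\phi_p$ grows, and the monotonicity would fail; verifying that \eqref{e:limistep} rules this out, and confirming the nonnegativity of $\widetilde{\beta}_{1,n,p}$ directly from its defining sum, is the crux of the argument.
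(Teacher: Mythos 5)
Your proposal is correct and follows essentially the same route as the paper: under \eqref{e:limistep} the constants in \eqref{def:vp0}, \eqref{def:vp1}, and \eqref{def:vp-bias} are nonnegative, so $\widehat{a}_{1,n}$ is nondecreasing in each $\phi_p$, and the recursion \eqref{e:widehathetan}--\eqref{e:widehathetam} propagates this monotonicity to $\widehat{\theta}_m$. Your write-up merely makes the sign analysis and the induction more explicit than the paper's terse version.
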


\begin{proof}
The real values of $(a_{i,n})_{1\le i \le n,1\le n \le m-1}$, $(\overline{a}_{i,m})_{1\le i \le n}$,
and $(\widehat{a}_{1,n})_{1\le n \le m}$,
defined respectively by \eqref{e:defain-bias},
\eqref{e:defain-proj}, and \eqref{e:defain-bias2}-\eqref{e:defain-projm2},
are nonnegative.
If Condition \eqref{e:limistep} holds, then
the constants defined in  \eqref{def:vp0},
\eqref{def:vp1}, and \eqref{def:vp-bias}
are nonnegative. Thus, according to \eqref{e:defain-bias2}-\eqref{e:defain-projm2}, for every
$n\in \{1,\ldots,m\}$, $\widehat{a}_{1,n}$
is an increasing function of the components of the associated pre-filter $\phi$.
It follows from \eqref{e:widehathetan} that
the monotonicity property holds
for $(\widehat{\theta_n})_{1\le n\le m-1}$
and, from \eqref{e:widehathetam},
that it extends to $\widehat{\theta}_m$.
\end{proof}
We deduce, in particular, that the Lipschitz constant is lower when 
initializing with  $x_0 = 0$ (case \textit{i)}) rather than 
$x_0 = b_0$ (case \textit{ii)}).\\

 \section{Numerical illustrations}
\label{section:numerics}
%%%%%%%%%%%%%%%%%%%%%%%%%%%%%%%%%%%%%%%%%%%%%%%%%%%%%%%%%%%%%%%%%%%%%%%%%%%%%%%%%%%%%%%%%%%%%%%%%%%%%%%%%%%%%%%%%%%%%%%%%%%%%%%%%

%\end{document}
\subsection{Stationary case}
We first consider the case when the parameters do not vary across the layers.
In this stationary case, we have:
for every $n\in \{1,\ldots,m\}$, 
$D_n = D$, $\lambda_n = \lambda$, 
$\eta_n = \eta$, and $\chi_n = \chi$. Details concerning this scenario are provided in Appendix \ref{secA1}.\\
We set $T$ as a two-dimensional $3\times 3$ \textcolor{black}{discrete} uniform blur such that $\|T\| = 1$.
In addition,
\[
D = \sqrt{\tau} \begin{bmatrix}
\nabla_{\rm H}\\
\nabla_{\rm V}
\end{bmatrix}
\]
where $\nabla_{\rm H}$ (resp. $\nabla_{\rm V}$) is the horizontal
(resp. vertical) discrete gradient operator and $\tau = 10^{-2}$.
Since $\X = \R^{N_1\times N_2}$ ($N_1 = N_2= 256$), the basis $(v_p)_{1\le p \le N_1N_2}$ is associated with the 2D discrete Fourier transform.
In our tests, we employ the weighted norm defined by
\begin{equation}\label{e:defnormwex}
(\forall p \in \{1,\ldots,N_1N_2\})\quad
\varpi_p = \beta_{T,p}^2+\epsilon
\end{equation}
with $\epsilon = 10^{-2}$. In the following experiments, the strong convexity modulus $\overline{\chi}$ is   zero.

Figure \ref{fig:thetam} illustrates the computed values of $\theta_m/2^{m-1}$ for the considered VNN, based on Proposition~\ref{p:LipVNN-bias}, as functions of the stepsize $\lambda$ and the leakage factor $\eta$, for one layer ($m=1$) and fifteen layers ($m=15$). The left part highlights the regions where the Lipschitz constant is less than one, hence the VNN is averaged. However, we observe that achieving nonexpansiveness of the virtual network requires relatively restrictive parameter choices.
Interestingly, small values of the leakage factor exhibit a stabilizing effect, as suggested by Proposition~\ref{prop:leakage}. Moreover, the region of nonexpansiveness is larger for $\theta_{15}/2^{14}$ compared to $\theta_1$, indicating that a multi-layer stability analysis yields more accurate results. This observation is further confirmed in Figure~\ref{fig:thetam-1}, which shows the difference between the separable estimate after 15 layers, given by $(\theta_1)^{15}$, and the computed value $\theta_{15}/2^{14}$.

\begin{figure}
\centering
\begin{tabular}{cc}
\includegraphics[height=4.5cm]{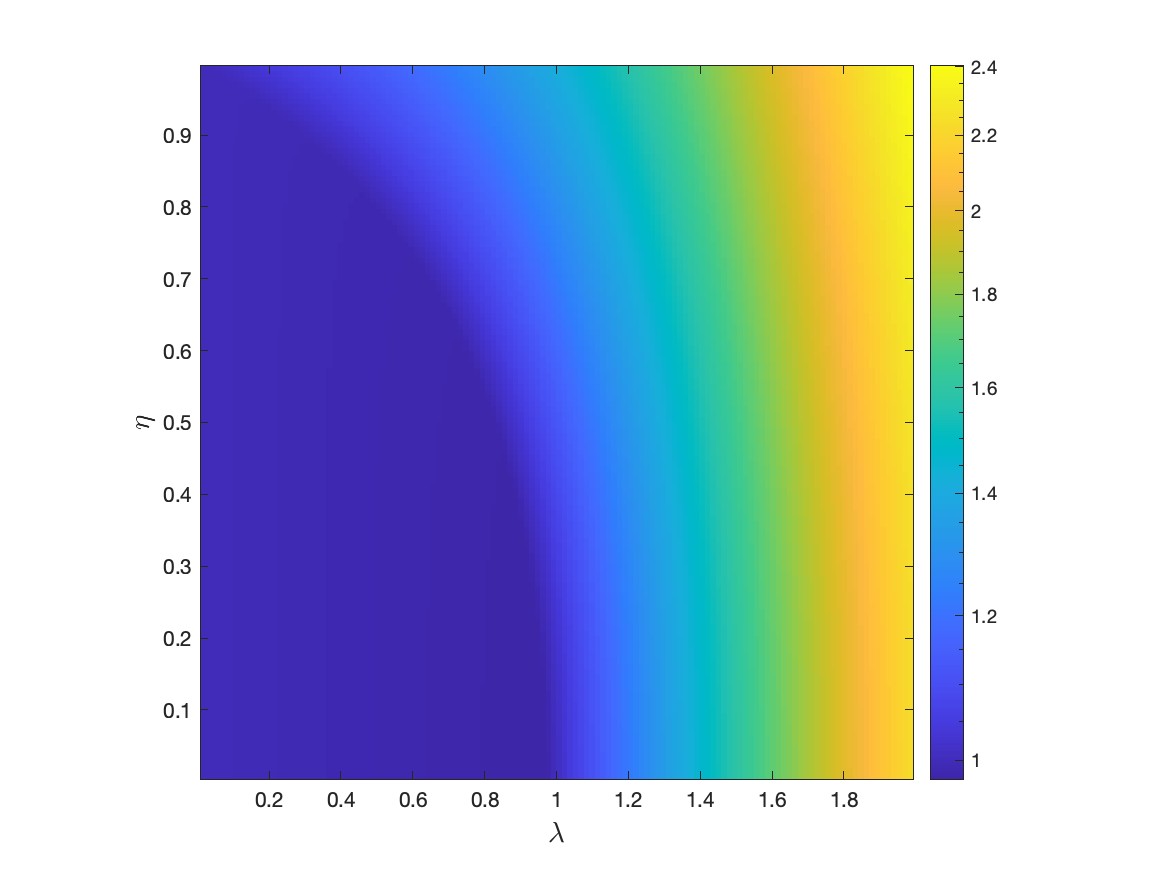}
&
\includegraphics[height=4.5cm]{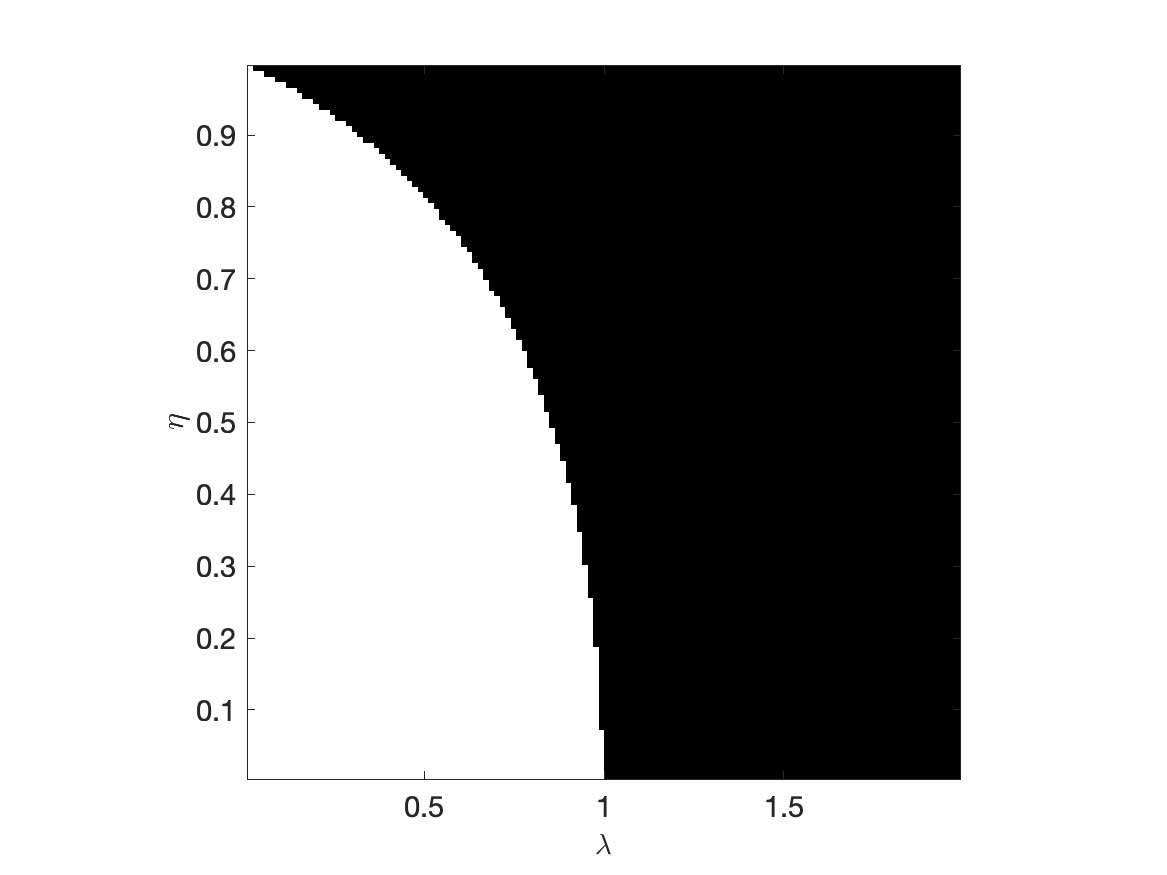}
\\
\includegraphics[height=4.5cm]{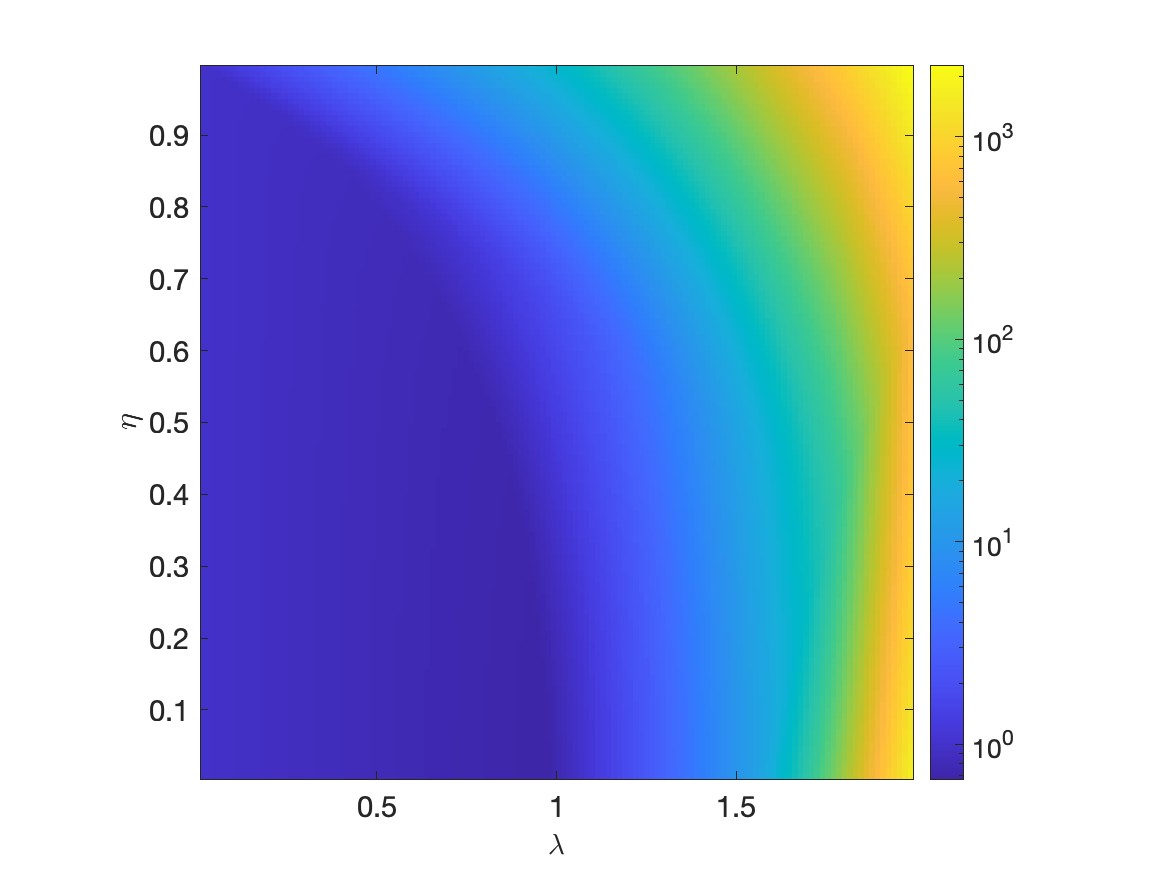}
&
\includegraphics[height=4.5cm]{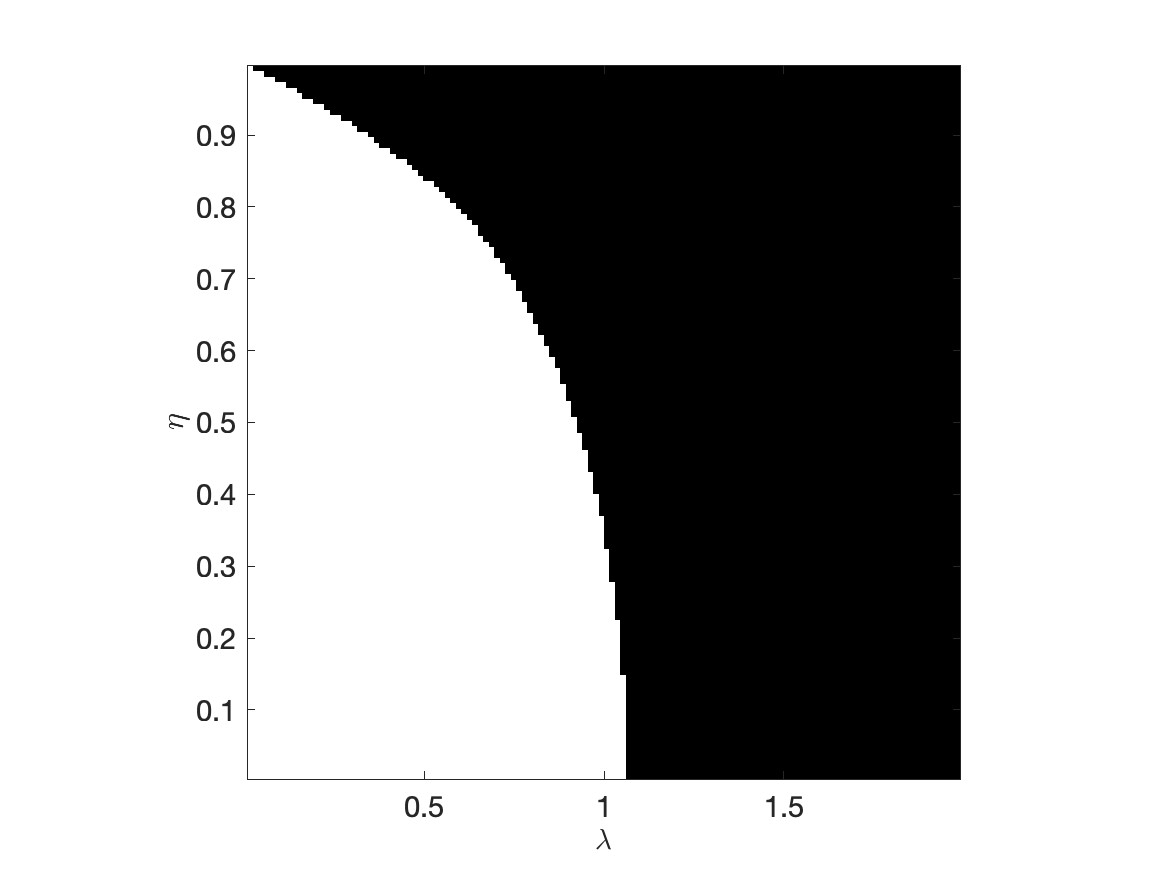}
\\
%$\scriptsize \widehat{\theta}_{15}$ & $\scriptsize [\widehat{\theta}_{15} \le 1]$
\end{tabular}
\caption{Stationary case: Lipschitz constant of the VNN as a function of $\lambda$ and $\eta$. First row: $\theta_1$, second one: $\theta_{15}/2^{14}$. First column: numerical value, second one: white if $\theta_m/2^{m-1} \le 1$.}\label{fig:thetam}
%\end{center}
\end{figure}

\begin{figure}
\centering
\includegraphics[height=4.5cm]{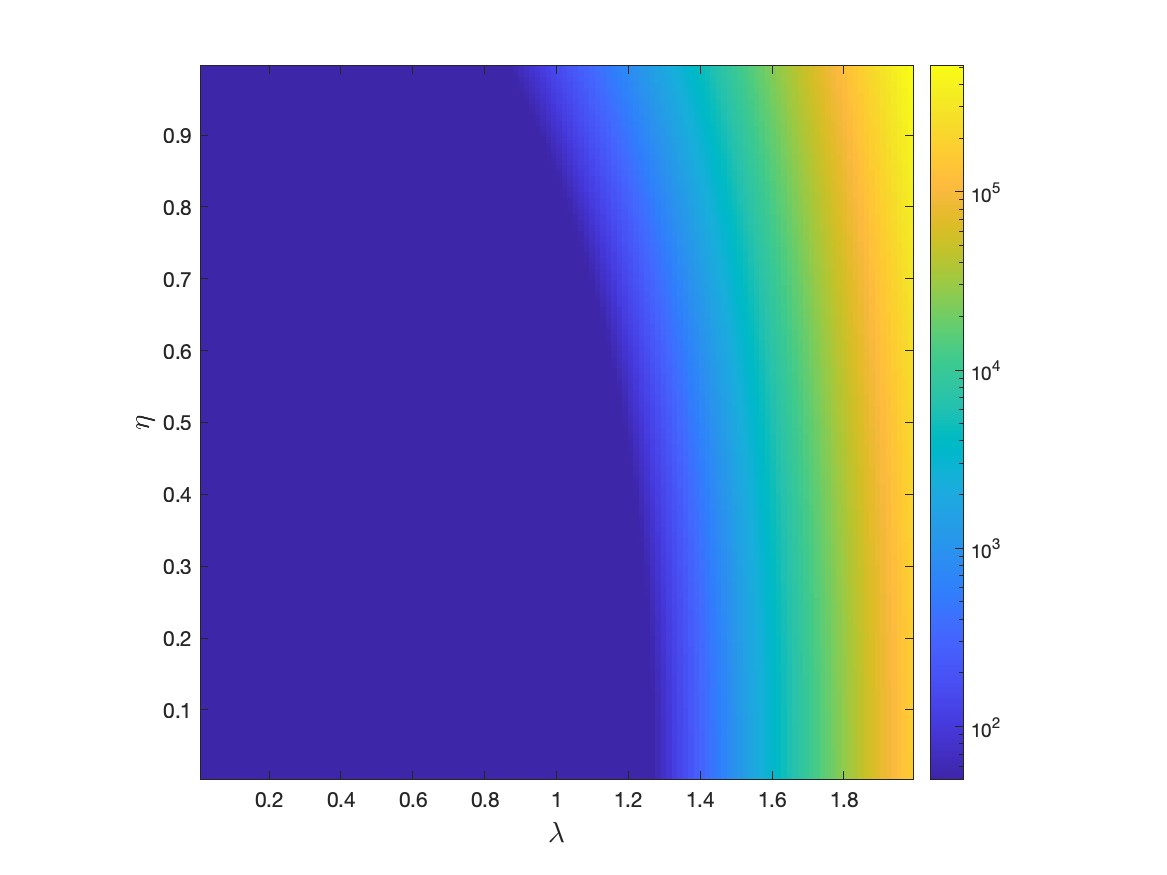}
\caption{Stationary case: Difference
$(\theta_1)^{15}-\theta_{15}/2^{14}$
for the VNN in the stationary case as a function of $\lambda$ and $\eta$.}\label{fig:thetam-1}
\end{figure}

Figure \ref{fig:thetab} illustrates a similar behavior for the Lipschitz constant $\overline{\theta}_{15}/2^{14}$, based on the use of a semi-norm on the VNN output, which has been computed by using Proposition~\ref{p:LipVNN-proj}. The bottom map displays the difference between $\theta_{15}/2^{14}$ and $\overline{\theta}_{15}/2^{14}$, which, as shown in \eqref{e:overthetabettertheta}, is always nonnegative.

Lastly, for the network with a single input and a single output, Figure \ref{fig:thetac} presents the Lipschitz constant $\widehat{\theta}_{15}/2^{14}$, computed using Proposition~\ref{p:LipVNN-proj2}, for two different operator choices, $F=0$ and $F=\1$. We observe that selecting $F=0$ results in greater stability compared to $F=\1$, which aligns with Proposition~\ref{prop:influencephi}.

\begin{figure}
\centering
\includegraphics[height=4.5cm]{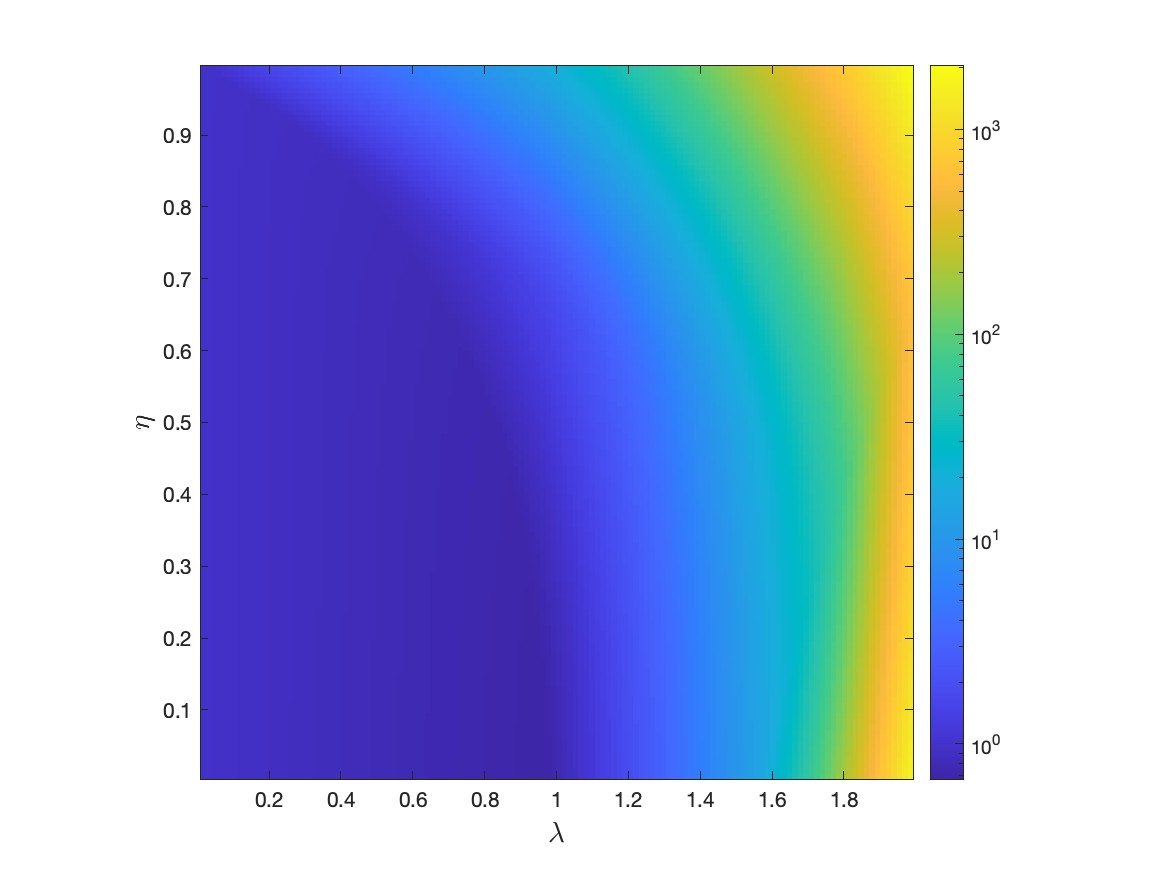}
\includegraphics[height=4.5cm]{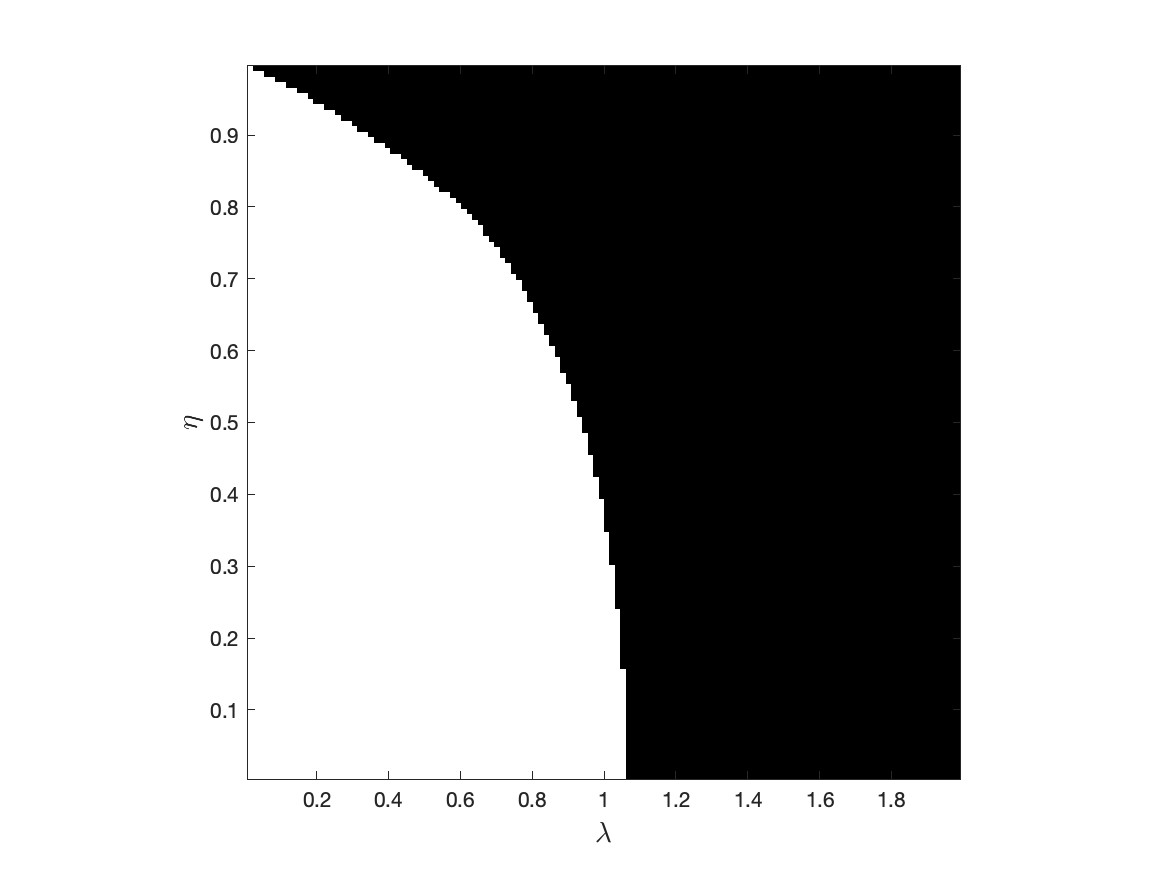}
\includegraphics[height=4.5cm]{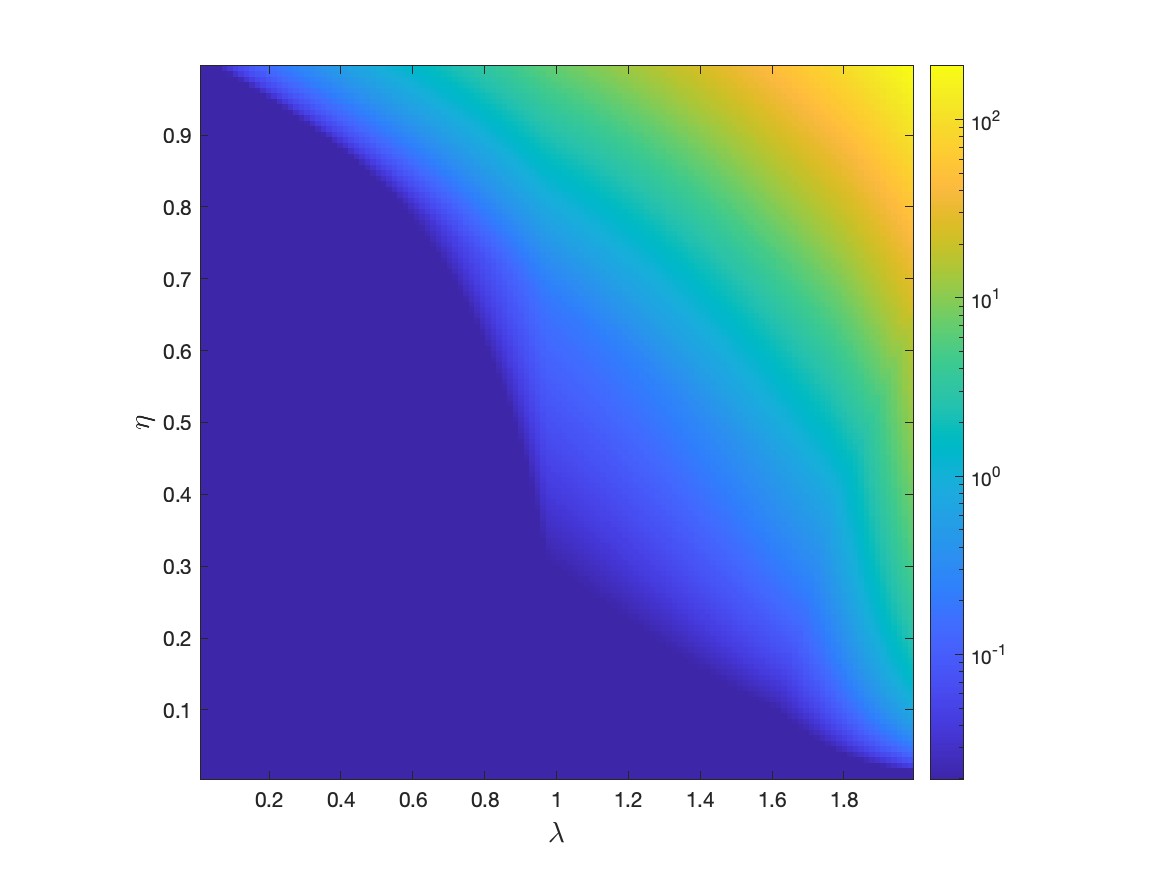}
\caption{Stationary case: Lipschitz constant of the network with inputs
$(x_0,b_0)$ and output $x_{15}$
as a function of $\lambda$ and $\eta$. 
Top left: $\overline{\theta}_{15}/2^{14}$.
Top right: white when $\overline{\theta}_{15}/2^{14}<1$.
\textcolor{black}{Bottom:} Difference
$\theta_{15}-\overline{\theta}_{15}$.}\label{fig:thetab}
\end{figure}

\begin{figure}
\centering
\begin{tabular}{cc}
\includegraphics[height=4.5cm]{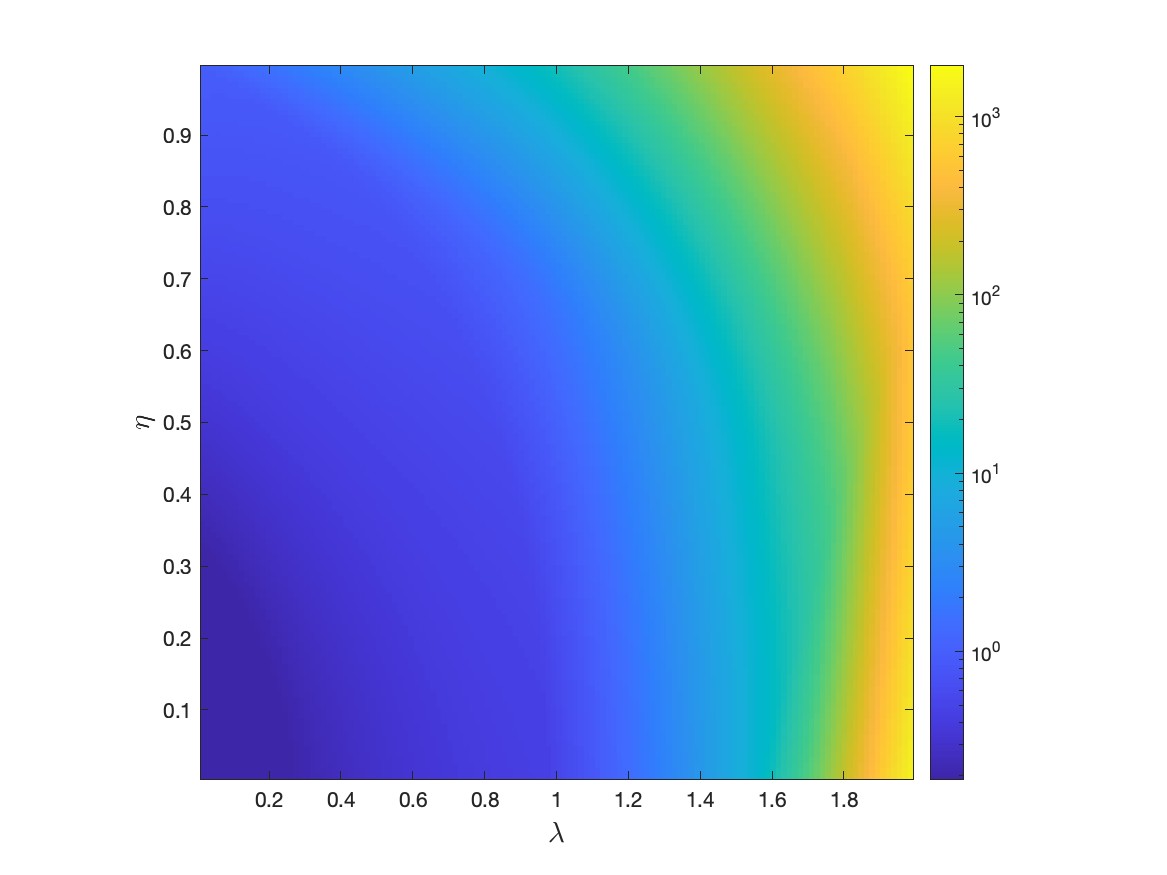}
&
\includegraphics[height=4.5cm]{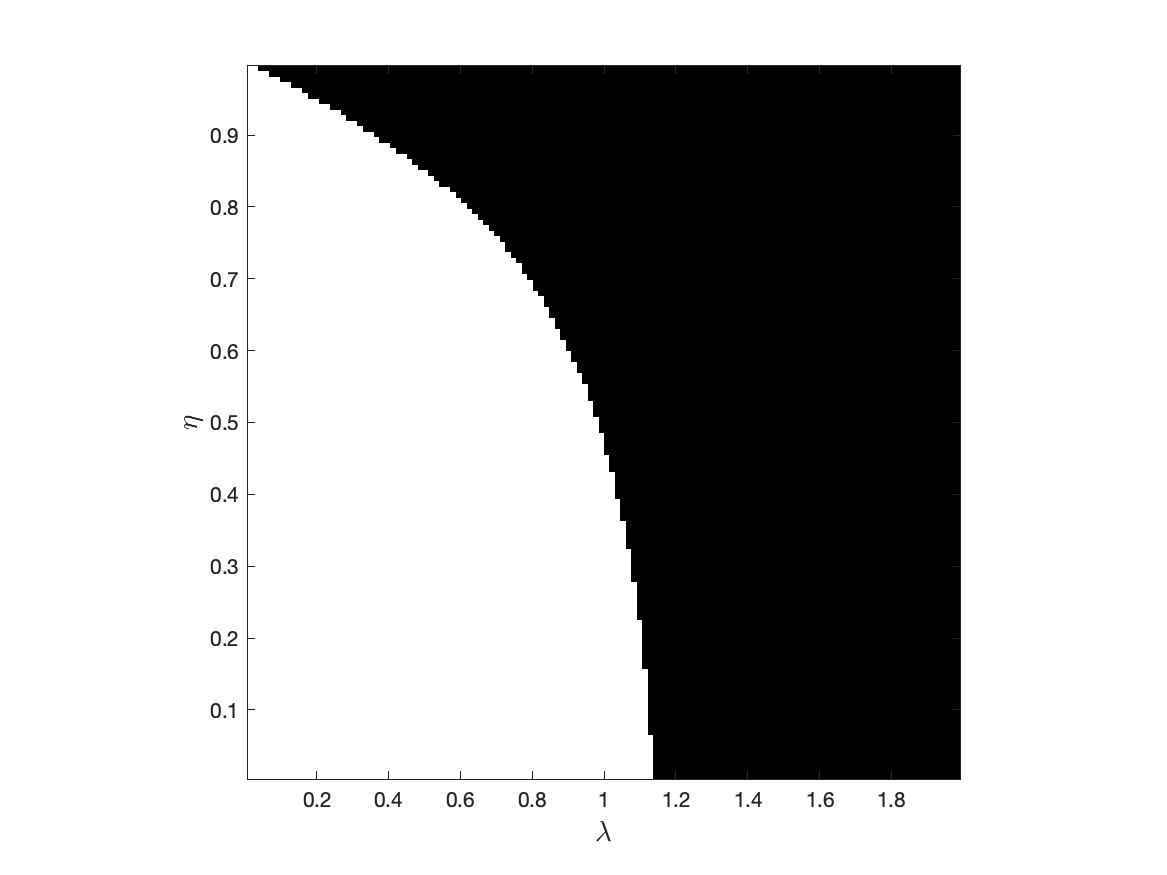}
\\
\includegraphics[height=4.5cm]{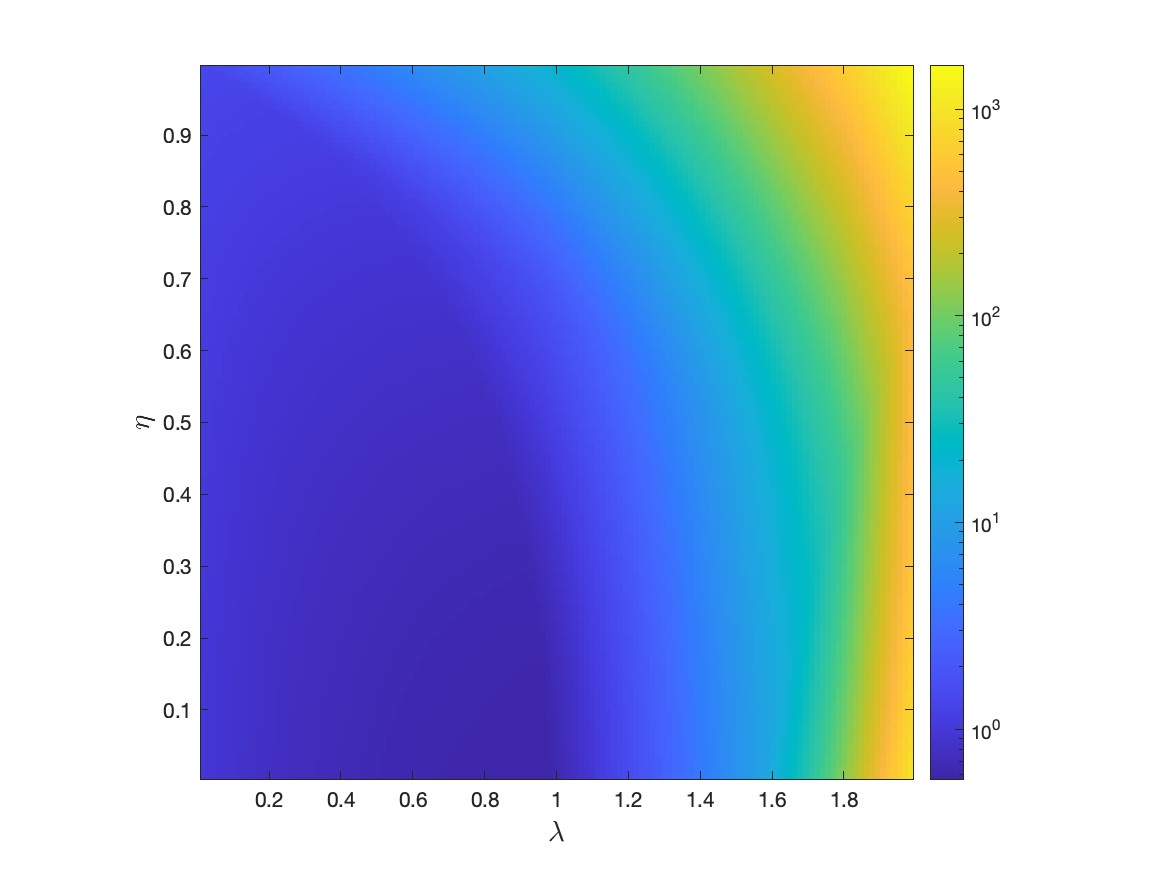}
&
\includegraphics[height=4.5cm]{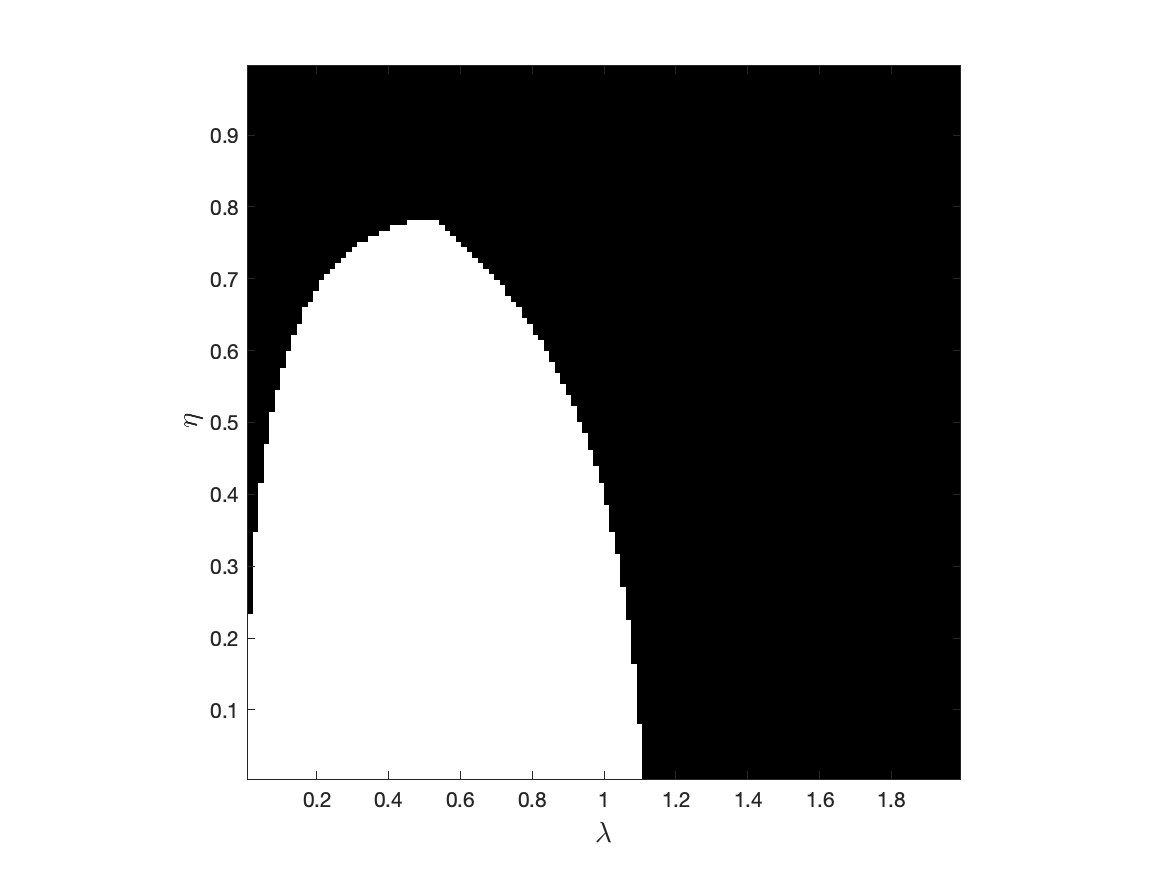}
\\
%$\scriptsize \widehat{\theta}_{15}$ & $\scriptsize [\widehat{\theta}_{15} \le 1]$
\end{tabular}
\caption{Stationary case: Lipschitz constant $\widehat{\theta}_{15}/2^{14}$ \textcolor{black}{as} a function of $\lambda$ and $\eta$. First row: $F = 0$, second one: $F = \1$. First column: numerical value, second one: white if $\widehat{\theta}_{15}/2^{14} \le 1$.}\label{fig:thetac}
%\end{center}
\end{figure}

\subsection{Nonstationary case}
We now consider the scenario most commonly encountered in unrolled algorithms, where the parameter values vary across layers. In our experiments, we set
\[
(\forall n \in \{1,\ldots,m\})\quad 
D_n = \sqrt{\tau_n} \begin{bmatrix}
\nabla_{\rm H}\\
\nabla_{\rm V}
\end{bmatrix},
\]
and choose $T$ as a two-dimensional \textcolor{black}{discrete} $5\times 5$ uniform blur such that $\|T\| = 1$.
Parameters $\{\tau_n\}_{1\le n \le m}
\subset [0.099,0.0249]$
and $\{\lambda_n\}_{1\le n \le m}
\subset [0.5128,0.9585]$, typical of those learned from an image dataset, are used.  \textcolor{black}{Constant 
values for $(\eta_n)_{1\le n \le m}$ and
$(\chi_n)_{1\le n \le m}$ have been chosen.}
They are set to
0.98 and $10^{-3}$, respectively, although setting the latter strong convexity moduli to zero results in only a slight reduction in the estimated Lipschitz regularity.
The weighted norm is defined similarly to the previous example (see
\eqref{e:defnormwex}).

Figure \ref{fig:nonstat} illustrates the variations in the Lipschitz constant estimates \(\theta_n/2^{n-1}\), \(\overline{\theta}_n/2^{n-1}\), and \(\widehat{\theta}_n/2^{n-1}\) as a function of the layer index \(n\). We also compare the proposed estimates with the lower and upper bounds derived in Propositions~\ref{p:LipVNN-bias}, \ref{p:LipVNN-proj}, and \ref{p:LipVNN-proj2}. It can be observed that the proposed bounds are significantly tighter than the separable lower bounds. While the resulting values exceed 1, they remain within a satisfactory order of magnitude when compared to those encountered in sensitive networks \cite{Neacsu2024}.  

\begin{figure}
\centering
\begin{tabular}{c}
\includegraphics[width=9cm,height=4.5cm]{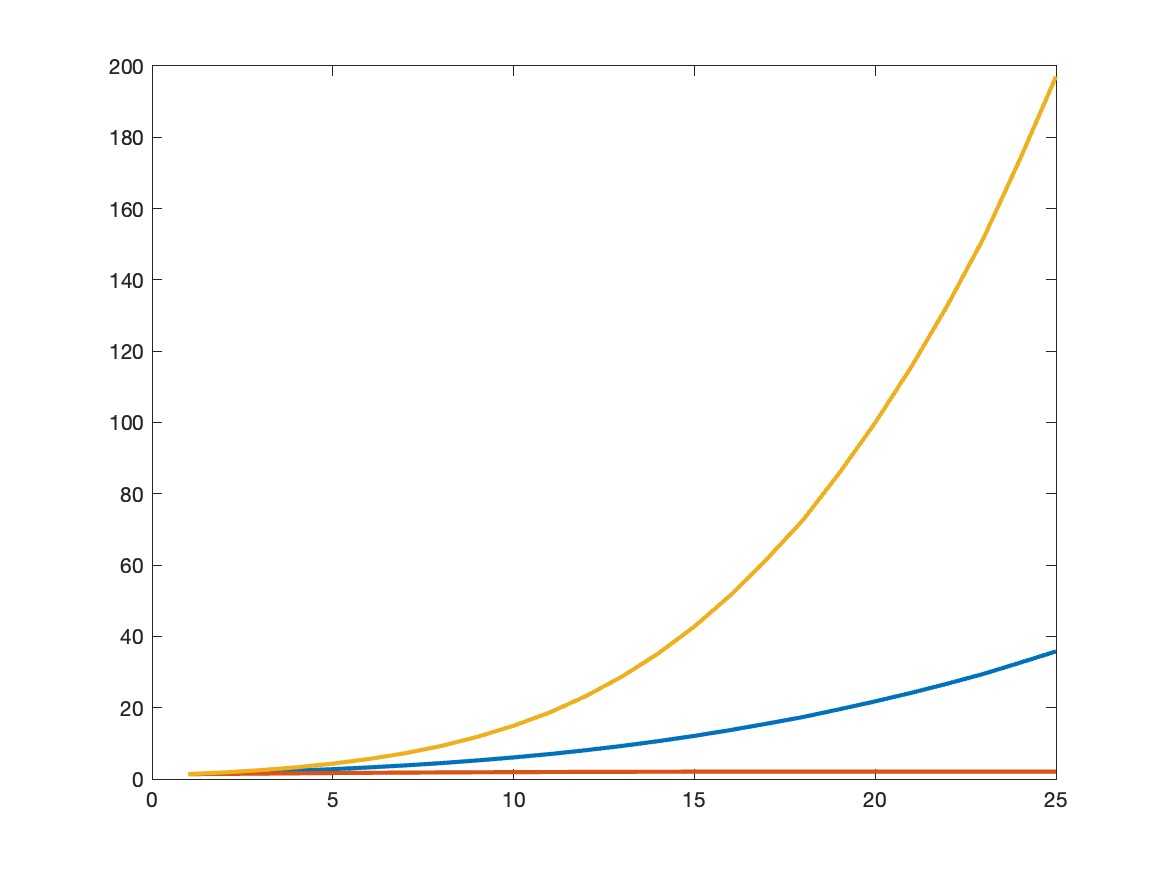}\\
\includegraphics[width=9cm,height=4.5cm]{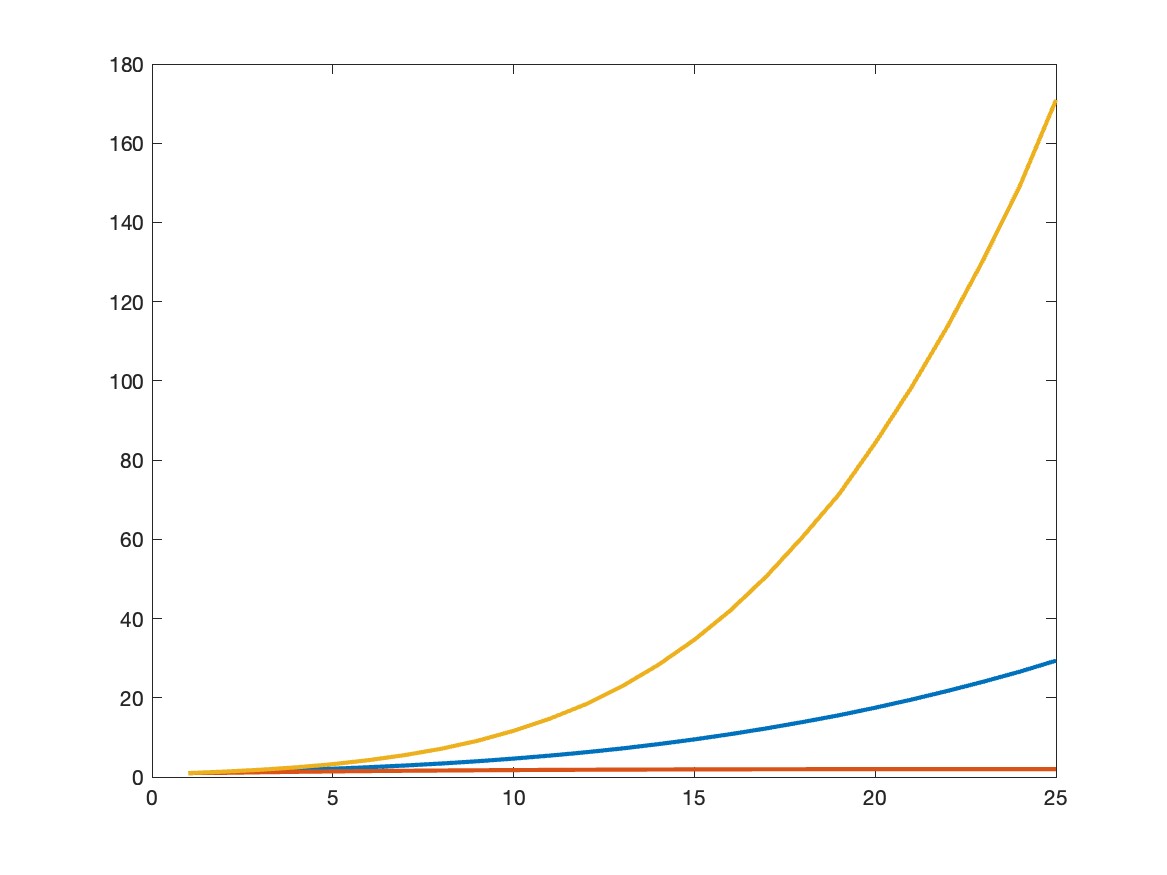}\\
\includegraphics[width=9cm,height=4.5cm]{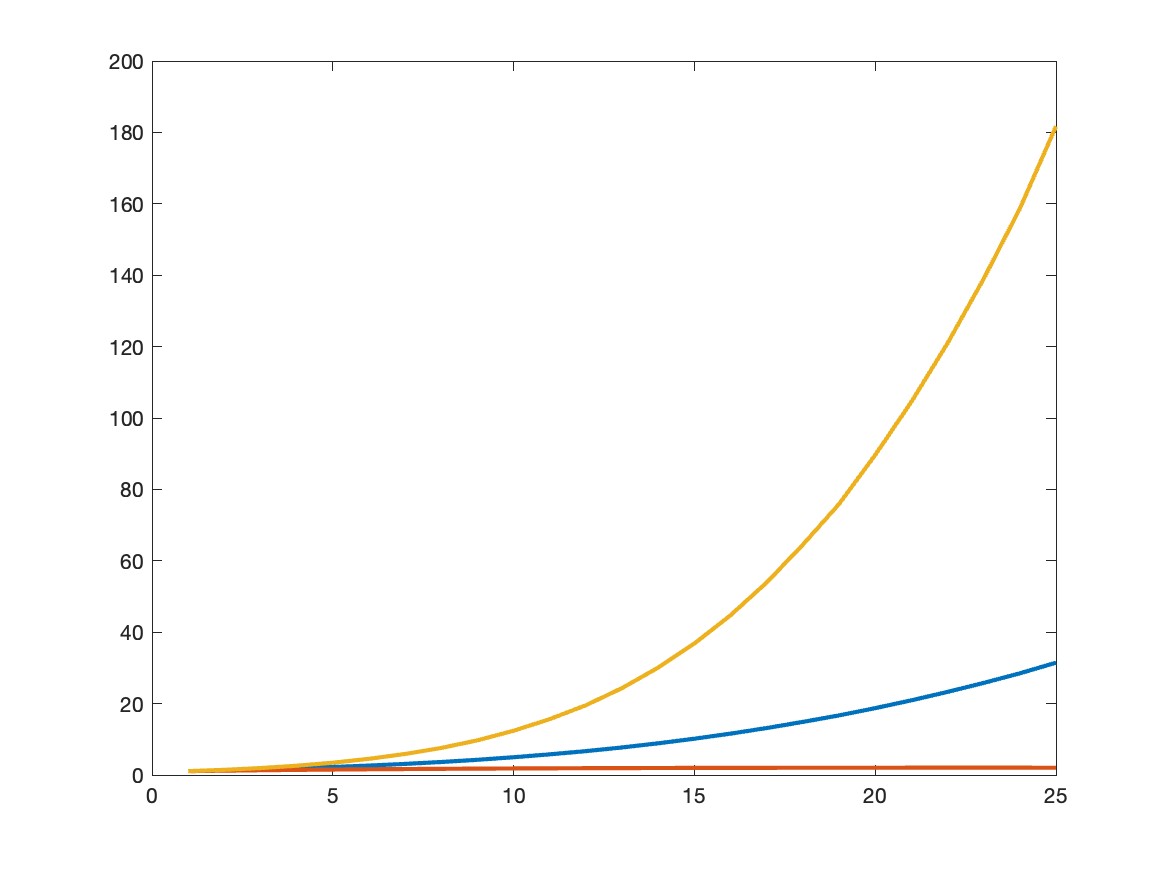}
\end{tabular}
\caption{Nonstationary case.
From top to bottom: $\theta_n/2^{n-1}$,
$\overline{\theta}_n/2^{n-1}$, and
$\widehat{\theta}_n/2¨{n-1}$ versus $n$
(in blue), and the corresponding upper bound
(in orange) and lower bound (in red).
For $\widehat{\theta}_n/2^{n-1}$, $F = \1$.
}\label{fig:nonstat}
\end{figure}

%%%%%%%%%%%%%%%%%%%%%%%%%%%%%%%%%%%%%%%%%%%%%%%%%%%%%%%%%%%%%%%%%%%%%%%%%%%%%%%%%%%%%%%%%%%%%%%%%%%%%%%%%%%%%%%%%%%%%%%%%%%%%%%%%
% CONCLUSION
\section{Conclusion}\label{se:conclusion}
Unrolling optimization algorithms into neural architectures offers significant advantages, including a reduced number of parameters to learn, efficient GPU implementation, and the ability to optimize the quality of the results using relevant loss functions. However, there is currently a lack of theoretical analysis for these structures.\\
In this work, by leveraging tools from fixed-point theory, we conducted an analysis of the robustness of an unrolled version of the forward-backward algorithm, extending the work in \cite{Corbineau2020}, where sensitivity to observed data was overlooked. Our analysis relies on Lipschitz bounds that are computationally efficient (with quadratic complexity) and tighter than the separable bounds commonly used in the neural network community \cite{serrurier2020achieving}.\\
We acknowledge that, \textcolor{black}{although these bounds are rigorous}, they are not optimal. Future work could focus on deriving tighter bounds to further refine the analysis. Additionally, exploring more general proximal algorithms beyond the one considered in this study would be an interesting direction. \textcolor{black}{For example, we could consider the presence of a heavy ball/inertial term in the forward-backward algorithm.}
Finally, we believe the 
$\alpha$-averaging properties we established could inspire the design of recurrent networks in the spirit of \cite{Combettes2019}.

%%%%%%%%%%%%%%%%%%%%%%%%%%%%%%%%%%%%%%%%%%%%%%%%%%%%%%%%%%%%%%%%%%%%%%%%%%%%%%%%%%%%%%%%%%%%%%%%%%%%%%%%%%%%%%%%%%%%%%%%%%%%%%%%%

%\section{Conclusion}\label{sec13}

\backmatter

\bmhead{Acknowledgements}

The work by J.-C. Pesquet was supported by the ANR Chair in Artificial Intelligence BRIDGEABLE. E. Chouzenoux acknowledges support from the European Research Council Starting Grant MAJORIS ERC-2019-STG-850925.

%%===================================================%%
%% For presentation purpose, we have included        %%
%% \bigskip command. Please ignore this.             %%
%%===================================================%%

\begin{appendices}

\section{Calculations related to Remark~\ref{re:weightednorm}.}\label{secA0}

\paragraph{Firm nonexpansiveness of $Q_n$.} 
\textcolor{black}{For every 
$n\in \{1,\ldots,m\}$, $z=(x,b)\in \X^2$, and $z'=(x',b')\in \X^2$,
\begin{align*}
&\|Q_n z - Q_n z'\|^2 + \|(\1-Q_n)z+ (\1-Q_n)z'\|^2\\
& = \|R_n x- R_n x'\|^2+\|b-b'\|_{\varpi}^2+ 
\|(\1-R_n)x-(\1-R_n)x'\|^2\\
& \le \|x-x'\|^2+\|b-b'\|_{\varpi}^2\\
& = \|z-z'\|^2\;,
\end{align*}
where the inequality follows from the firm nonexpansiveness of $R_n$.
}
\paragraph{Proof of \eqref{e:defain-bias-weighted}.}
\textcolor{black}{
The introduction of the weighted norm in \eqref{e:normweight} induces
the following changes in the proof of Lemma \ref{e:normUni-bias}.\\
Using the same notation as in the proof, the norm of $U_n\circ \cdots \circ U_i z$ is now given by
\[
\|U_n\circ \cdots \circ U_i z\|^2 = \; \sum_p \left(\beta_{i,n,p}^2 \xp^2 + 2 \beta_{i,n,p} \widetilde{\beta}_{i,n,p} \xp \bp
+ \Big(\widetilde{\beta}_{i,n,p}^2 + \frac{\eta_{i,n}^2}{\varpi_p}\Big) \bp^2\right)
\; .
\]
The above expression has to be maximized on
$(x,b)$ subject to the constraint
\[
\|x\|^2 +\|b\|_\varpi^2=
\sum_p \left(\xp^2 + \frac{\bp^2}{\varpi_p}\right) = 1. 
\]
This suggests to set, for every integer $p$, $\zp' = (\xp, \bp/\sqrt{\varpi}_p)$, which leads to
\[
 \| U_n\circ \cdots \circ U_i \|^2 = \; \underset{z' = (\zp')_p, \|z'\| =1}{\sup}
             \sum_p (\zp')^\top
A'_{i,n,p}\, (\zp') 
\; ,
\]
where the supremum is calculated on the standard
unit ball and 
\[A'_{i,n,p} = \left( 
\begin{array}{cc}
  \beta_{i,n,p}^2   & \sqrt{\varpi_p}\beta_{i,n,p} \widetilde{\beta}_{i,n,p} \\
\sqrt{\varpi_p}\widetilde{\beta}_{i,n,p}\beta_{i,n,p}     & \eta_{i,n}^2 + \varpi_p\widetilde{\beta}_{i,n,p}^2
\end{array}
\right) \; .
\]
%Hence, equipping the product space with the reweighted norm yields
%\begin{equation} \label{e:defnormU2-bias-before}
% \| U_n\circ \cdots \circ U_i \|^2 = \; \underset{z' = (\zp')_p, \|z'\| =1}{\sup}
%             \sum_p (\zp')^\top
%A'_{i,n,p}\, \zp' 
%\; .
%\end{equation}
The rest of the derivation of \eqref{e:defain-bias-weighted} is similar to that in the proof of
Lemma \ref{e:normUni-bias}.
}

\section{Stationary case.}\label{secA1}

\textcolor{black}{This appendix shows that, in the stationary case, some of the quantities of interest can be easily 
expressed. This allows us to state more explicitly necessary/sufficient conditions for the VNN to be  1-Lipschitz.}

In this case, for every 
$p$, $\beta_p^{(n)} = \beta_p$. In addition, $\eta_{1,m} = \eta^m$, 
$\beta_{1,m,p} = \beta_p^m$,
and \eqref{def:vp-bias} simplifies to
\[
\widetilde{\beta}_{1,m,p}
= \frac{\lambda}{1+\lambda\chi} \sum_{j=0}^{m-1}\beta_p^j \eta^{m-j-1}.
\]
Then, it follows from \eqref{e:looserLipb} that  necessary conditions for 
$\theta_m/2^{m-1}=1$
are 
\[
(\forall p)\quad
\begin{cases}
|\beta_p| \le 1\\
\displaystyle\varpi_p \Big(\frac{\lambda}{1+\lambda\chi}\Big)^2\Big(\sum_{j=0}^{m-1}\beta_p^j \eta^{m-j-1}\Big)^2+\eta^{2m}\le 1 \quad \Rightarrow \eta < 1\;.
\end{cases}
\]
The first condition is classical for the convergence of the proximal gradient algorithm and 
it is satisfied if and only if 
\textcolor{black}{$\lambda (\sup_p (\beta_{T,p}+\beta_{D,p})-\chi) \le 2$}.

Furthermore, \eqref{e:looserLipb} yields the following sufficient condition for the VNN to be 1-Lipschitz:
\[
(\forall p)
\quad 
\beta_p^2+\varpi_p \Big(\frac{\lambda}{1+\lambda\chi}\Big)^2+\eta^2 \le 1\;,
\]
which, by virtue of \eqref{def:vp0}, is also equivalent to
\begin{equation}\label{e:sufcondetasstat}
(\forall p)\quad
\big(1-\lambda (\beta_{T,p}+\beta_{D,p})\big)^2+\varpi_p
\lambda^2
+(1+\lambda\chi)^2
(\eta^2-1) \le 0\;.
\end{equation}
As expected, the strong convexity of the regularizing function $g$ allows this condition to be more easily satisfied since $\eta^2-1\le 0$.
\textcolor{black}{
It can be observed that a small value of $\eta$ also has a favorable effect.
}
For simplicity, let us focus on the case when $\chi = 0$, which provides the following sufficient condition for \eqref{e:sufcondetasstat}:
\[
(\forall p)\quad
((\beta_{T,p}+\beta_{D,p})^2+\varpi_p
)
\lambda^2-2 (\beta_{T,p}+\beta_{D,p})\lambda
+\eta^2 \le 0.
\]
These inequalities can be satisfied only if 
\[
(\forall p)\quad
\begin{cases}
(\beta_{T,p}+\beta_{D,p})^2-\eta^2
((\beta_{T,p}+\beta_{D,p})^2+\varpi_p) \ge 0\\
\lambda_{1,p}(\eta) \le 
\lambda \le \lambda_{2,p}(\eta),
\end{cases}
\]
where $\lambda_{1,p}(\eta)$
and $\lambda_{2,p}(\eta)$ are the smallest and largest roots of the
quadratic polynomial
$((\beta_{T,p}+\beta_{D,p})^2+\varpi_p)
\lambda^2-2 (\beta_{T,p}+\beta_{D,p})\lambda
+\eta^2$.
The first inequalities are satisfied if and only if
\[
\eta \le \inf_p \frac{\beta_{T,p}+\beta_{D,p}}{
\sqrt{(\beta_{T,p}+\beta_{D,p})^2+\varpi_p}}
\]
while the second require that
$\sup_p \lambda_{1,p}(\eta) \le  
\inf_p \lambda_{2,p}(\eta)$.

%%=============================================%%
%% For submissions to Nature Portfolio Journals %%
%% please use the heading ``Extended Data''.   %%
%%=============================================%%

%%=============================================================%%
%% Sample for another appendix section			       %%
%%=============================================================%%

%% \section{Example of another appendix section}\label{secA2}%
%% Appendices may be used for helpful, supporting or essential material that would otherwise 
%% clutter, break up or be distracting to the text. Appendices can consist of sections, figures, 
%% tables and equations etc.

\end{appendices}

%%===========================================================================================%%
%% If you are submitting to one of the Nature Portfolio journals, using the eJP submission   %%
%% system, please include the references within the manuscript file itself. You may do this  %%
%% by copying the reference list from your .bbl file, paste it into the main manuscript .tex %%
%% file, and delete the associated \verb+\bibliography+ commands.                            %%
%%===========================================================================================%%

\bibliography{biblio}% common bib file
%% if required, the content of .bbl file can be included here once bbl is generated
%%\input sn-article.bbl

\end{document}